\newcommand{\comments}[1]{}
\numberwithin{equation}{section}
\def\blfootnote{\xdef\@thefnmark{}\@footnotetext}
\newcommand{\red}[1]{{\color{red} #1}}
\definecolor{orange}{rgb}{1,0.5,0}
\newcommand{\ds}{\displaystyle}
\theoremstyle{plain}
\newtheorem{thm}{Theorem}[section]
\newtheorem{lem}[thm]{Lemma}
\theoremstyle{definition}
\newtheorem{dfn}[thm]{Definition}
\newtheorem{ex}[thm]{Example}
\theoremstyle{remark} 
\newtheorem{rmk}[thm]{Remark}
\newcommand{\C}{{\mathbb{C}}}
\newcommand{\CR}{{\hat{\mathbb{C}}}}
\newcommand{\CS}{{\mathbb{C}^*}}
\newcommand{\B}{\mathcal B}
\newcommand{\R}{{\mathbb{R}}}
\newcommand{\Z}{{\mathbb{Z}}}
\newcommand{\N}{{\mathbb{N}}}
\renewcommand{\Re}{\operatorname{Re}}
\renewcommand{\Im}{\operatorname{Im}}
\newcommand{\displaybump}{\hbox to \@totalleftmargin{\hfil}}
\begin{document}

\bibliographystyle{amsalpha}

\title[Escaping Fatou components of transcendental self-maps of $\C^*$]{Escaping Fatou components of transcendental self-maps of the punctured plane}
\author[D. Mart\'i-Pete]{David Mart\'i-Pete}
\address{Department of Mathematics\\ Faculty of Science\\ Kyoto University\\ Kyoto 606-8502\\ Japan}
\email{martipete@math.kyoto-u.ac.jp}
\date{\today}
\thanks{This research was supported by The Open University and by the Grant-in-Aid for Scientific Research JP16F16807 from the Japanese Society for the Promotion of Science.}

\maketitle

\begin{abstract}
We study the iteration of transcendental self-maps of $\C^*:=\C\setminus \{0\}$, that is, holomorphic functions $f:\C^*\to\C^*$ for which both zero and infinity are essential singularities. We use approximation theory to construct functions in this class with escaping Fatou components, both wandering domains and Baker domains, that accumulate to $\{0,\infty\}$ in any possible way under iteration. We also give the first explicit examples of transcendental self-maps of $\C^*$ with Baker domains and with wandering domains. In doing so, we developed a sufficient condition for a function to have a simply connected escaping wandering domain. Finally, we remark that our results also provide new examples of entire functions with escaping Fatou components. 
\end{abstract}

\section{Introduction}

Complex dynamics concerns the iteration of a holomorphic function on a Riemann surface $S$. Given a point $z\in S$, we consider the sequence given by its iterates $f^n(z)=(f\circ\ds\mathop{\cdots}^{n}\circ f)(z)$ and study the possible behaviours as $n$ tends to infinity. We partition $S$ into the \textit{Fatou set}, or stable set,
$$
F(f):=\bigl\{z\in S\ :\ (f^n)_{n\in\mathbb N} \mbox{ is a normal family in some neighbourhood of } z\bigr\}
$$
and the \textit{Julia set} $J(f):=S\setminus  F(f)$, where the chaotic behaviour takes place. We refer to a connected component of $F(f)$ as a \textit{Fatou component} of $f$. If $S\subseteq \CR$, $f:S\rightarrow S$ is holomorphic and $\CR\setminus S$ consists of essential singularities, then conjugating by a M\"obius transformation, we can reduce to one of the following three cases:
\begin{itemize}
\item $S=\CR:=\C\cup\{\infty\}$ and $f$ is a rational map;
\item $S=\C$ and $f$ is a transcendental entire function;
\item $S=\CS:=\C\setminus\{0\}$ and \textit{both} zero and infinity are essential singularities.
\end{itemize}
We study this third class of maps, which we call \textit{transcendental self-maps of} $\CS$. Such maps are all of the form
\begin{equation}
f(z)=z^n\exp\bigl(g(z)+h(1/z)\bigr),
\label{eqn:bhat}
\end{equation}
where $n\in\Z$ and $g,h$ are non-constant entire functions. We define the \textit{index}~of~$f$, denoted by $\textup{ind}(f)$, as the index (or winding number) of $f(\gamma)$ with respect to the origin for any positively oriented simple closed curve $\gamma$ around the origin; note that $\textup{ind}(f)=n$ in \eqref{eqn:bhat}. Transcendental self-maps of $\C^*$ arise in a natural way in many situations, for example, when you complexify circle maps, like the so-called Arnol'd standard family: $f_{\alpha,\beta}(z)=ze^{i\alpha}e^{\beta(z-1/z)/2}$, $0\leqslant \alpha\leqslant 2\pi,\ \beta\geqslant 0$ \cite{fagella99}. Note that if $f$ has three or more omitted points, then, by Picard's theorem, $f$ is constant and, consequently, a non-constant holomorphic function $f:\C^*\rightarrow \C^*$ has no omitted values. The book \cite{milnor06} is a basic reference on the iteration of holomorphic functions in one complex variable. See \cite{bergweiler93} for a survey on transcendental entire and meromorphic functions. Although the iteration of transcendental (entire) functions dates back to the times of Fatou \cite{fatou26}, R{\aa}dstr\"om \cite{radstrom53} was the first to consider the iteration of transcendental self-maps of $\C^*$. An extensive list of references on this topic can be found in \cite{martipete}.

We recall the definition of the \textit{escaping set} of an entire function $f$,
$$
I(f):=\{z\in\C\ :\ f^n(z)\rightarrow \infty \mbox{ as } n\rightarrow \infty\},
$$
whose investigation has provided important insight into the Julia set of entire functions. For polynomials, the escaping set consists of the basin of attraction of infinity and its boundary equals the Julia set. For transcendental entire functions, Eremenko showed that $I(f)\cap J(f)\neq \emptyset$, $J(f)=\partial I(f)$ and the components of $\overline{I(f)}$ are all unbounded \cite{eremenko89}. If $f$ is a transcendental self-map of $\C^*$, then the escaping set of $f$ is given by 
$$
I(f):=\{z\in\CS\ :\ \omega(z,f)\subseteq \{0,\infty\}\},
$$
where $\omega(z,f)$ is the classical omega-limit set $\omega(z,f):=\bigcap_{n\in\N}\overline{\{f^k(z)\ :\ k\geqslant n\}}$ with the closure being taken in $\CR$. In \cite{martipete1}, we studied the basic properties of $I(f)$ for transcendental self-maps of $\C^*$ and introduced the following notion. We define the \textit{essential itinerary} of a point $z\in I(f)$ as the symbol sequence \mbox{$e=(e_n)\in\{0,\infty\}^{\N_0}$} given by
$$
e_n:=\left\{
\begin{array}{ll}
0, & \mbox{ if } |f^n(z)|\leqslant 1,\vspace{5pt}\\
\infty, & \mbox{ if } |f^n(z)|>1,
\end{array}
\right.
$$
for all $n\in \N_0$. Then, for each sequence $e\in\{0,\infty\}^{\N_0}$, we consider the set of points whose essential itinerary is eventually a shift of $e$, that is,
$$
I_e(f):=\{z\in I(f)\ :\ \exists \ell,k\in\N_0,\ \forall n\in\mathbb{N}_0,\ |f^{n+\ell}(z)|>1\Leftrightarrow e_{n+k}=\infty\}.
$$
Observe that if $e_1,e_2\in\{0,\infty\}^{\N_0}$ satisfy $\sigma^m(e_1)=\sigma^n(e_2)$ for some $m,n\in\N_0$, where $\sigma$ is the Bernoulli shift  map (we say that $e_1$ and $e_2$ are \textit{equivalent}), then $I_{e_1}(f)=I_{e_2}(f)$ and, otherwise, the sets $I_{e_1}(f)$ and $I_{e_2}(f)$ are disjoint. Hence, the concept of essential itinerary provides a partition of $I(f)$ into uncountably many non-empty sets of the form $I_e(f)$ for some $e\in\{0,\infty\}^{\mathbb{N}_0}$. In \cite{martipete1}, we also showed that, for each $e\in\{0,\infty\}^{\N_0}$, we have $I_e(f)\cap J(f)\neq \emptyset$, $J(f)=\partial I_e(f)$~and the components of $\overline{I_e(f)}$ are all unbounded in $\C^*$, that is, their closure in $\CR$ contains zero or infinity. We say that $U$ is an \textit{escaping Fatou component} of $f$ if $U$ is a component of $F(f)\cap I(f)$.

As usual, the set of singularities of the inverse function, $\mbox{sing}(f^{-1})$, which consists of the critical values and the finite asymptotic values of $f$, plays an important role in the dynamics of $f$. In \cite{fagella-martipete} we studied the class 
$$
\B^*:=\{f \mbox{ transcendental self-map of } \CS\ :\ \mbox{sing}(f^{-1}) \mbox{ is bounded away from } 0,\infty\},
$$
which is the analogue of the Eremenko-Lyubich class~$\mathcal{B}$ considered in \cite{eremenko-lyubich92}. We proved that if $f\in \B^*$, then $I(f)\subseteq J(f)$ or, in other words, functions in the class~$\mathcal{B}^*$ have no escaping Fatou components. 

In this paper, we are concerned with transcendental self-maps of $\C^*$ that have escaping Fatou components. By normality, if $U$ is a Fatou component of a transcendental self-map $f$ of $\C^*$ and $U\cap I(f)\neq\emptyset$, then $U\subseteq I(f)$. Moreover, note that every pair of points in an escaping Fatou component $U$ have, eventually, the same essential itinerary, and hence we can associate an essential itinerary to $U$, which is unique up to equivalence. We mentioned before that $I_e(f)\cap J(f)\neq \emptyset$ for each sequence $e\in\{0,\infty\}^{\N_0}$ (see \cite[Theorem~1.1]{martipete1}). Therefore it is a natural question whether for each $e\in\{0,\infty\}^{\N_0}$, there exists a transcendental self-map of~$\C^*$ with a Fatou component in $I_e(f)$.


For both transcendental entire functions and transcendental self-maps of~$\C^*$, escaping Fatou components can be classified in the following two kinds: let $U$ be a Fatou component of $f$ and denote by $U_n$, $n\in\mathbb{N}$, the Fatou component of $f$ that contains $f^n(U)$, then we say that\vspace{-1pt}
\begin{itemize}
\item $U$ is a \textit{wandering domain} if $U_m\cap U_n=\emptyset$ for all $m,n\in\N$ such that $m\neq n$,\vspace*{1pt}
\item $U$ is a \textit{Baker domain} (or a preimage of it) if $U\subseteq I(f)$ and $U$ is (\textit{pre})\textit{periodic}, that is, $U_{p+m}=U_m$ for some $p\in \mathbb{N}$, the \textit{period} of $U$, and $m=0$ ($m>0$).\vspace{-1pt}
\end{itemize}
Note that not all wandering domains are in $I(f)$. For instance, Bishop~\cite[Theorem~17.1]{bishop15} constructed an entire function in the class $\B$ with a wandering domain whose orbit is unbounded but it does not escape. 

The first example of a transcendental entire function with a wandering domain was given by Baker \cite{baker63, baker76} and was an infinite product that had a sequence of multiply connected Fatou components escaping to infinity; see \cite{bergweiler-rippon-stallard13} for a detailed study of the properties of such functions. For holomorphic self-maps of $\C^*$, Baker \cite{baker87} showed that all Fatou components, except possibly one, are simply connected, and hence this kind of wandering domains cannot occur. Further examples of simply connected wandering domains of entire functions are due, for example, to Herman \cite[Example~2]{baker84} or Baker \cite[Example~5.3]{baker84}.  

Baker \cite{baker87} also constructed the first holomorphic self-map of $\C^*$ (which is entire) with a wandering domain that escapes to infinity. The first examples of trans\-cendental self-maps of $\C^*$ with a wandering domain are due to Kotus \cite{kotus90}, where the wandering domain accumulates to zero, infinity or both of them. In the same paper, Kotus also constructed an example with an infinite limit set (by adapting the techniques from \cite{eremenko-lyubich92}). Mukhamedshin \cite{mukhamedshin91} used quasiconformal surgery to create a trans\-cendental self-map of~$\C^*$ with a Herman ring and two wandering domains, one escaping to zero and the other one to infinity. Finally, Baker and Dom\'inguez \cite[Theorem~6]{baker-dominguez98} gave an example of a doubly connected wandering domain that is relatively compact in~$\C^*$ and all of whose images are simply connected and escape to infinity. 

In our notation, all the previous examples of wandering domains of transcendental self-maps of $\C^*$ had essential itinerary $e\in\{\overline{\infty},\overline{0}, \overline{\infty 0}\}$, where $\overline{e_1e_2\hdots e_p}$ denotes the $p$-periodic sequence that repeats $e_1e_2\hdots e_p$. The following result provides examples of transcendental self-maps of $\C^*$ that have a wandering domain with any prescribed essential itinerary $e\in\{0,\infty\}^{\N_0}$. 

\begin{thm}
\label{thm:wandering-domains}
For each sequence $e\in\{0,\infty\}^{\N_0}$ and $n\in\Z$, there exists a trans\-cendental self-map $f$ of $\C^*$ such that $\textup{ind}(f)=n$ and the set $I_e(f)$ contains a wandering domain.
\end{thm}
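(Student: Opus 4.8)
The plan is to build $f$ by an approximation-theoretic argument, in the spirit of Baker's infinite products and the constructions of Kotus, but arranged so as to realize an \emph{arbitrary} essential itinerary $e=(e_n)$ and an \emph{arbitrary} index $n\in\Z$. First I would fix a sequence of round annuli $A_k\subseteq\C^*$ (together with larger concentric annuli $A_k'\supseteq A_k$) whose moduli position respects $e$: if $e_k=\infty$ then $A_k$ lies far out near infinity, and if $e_k=0$ then $A_k$ lies deep inside near $0$, with the radii tending to $\infty$ (resp.\ $0$) fast enough that the eventual dynamics will force escape. I would pick points $a_k$ in $A_k$ to serve as the ``centres'' of the successive images of the wandering domain. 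The target behaviour is $f(a_k)=a_{k+1}$, with $f$ a strong contraction in a disc around $a_k$ onto a small disc around $a_{k+1}$, so that a Fatou component (a wandering domain) persists through the orbit and its itinerary is exactly $e$.

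The second step is to produce such an $f$ as a transcendental self-map of $\C^*$ of the prescribed index. Here I would write the candidate in the form \eqref{eqn:bhat}, $f(z)=z^n\exp(g(z)+h(1/z))$, and construct $g$ and $h$ by approximation: using a Runge/Arakelian-type theorem (approximation on a suitable closed set that is the union of the annuli $A_k$ and a path joining them to control the global geometry), I would find entire $g,h$ so that on each $A_k$ the function $z^n\exp(g(z)+h(1/z))$ is uniformly close to the prescribed local model $z\mapsto a_{k+1}+c_k(z-a_k)$ with $|c_k|$ very small. The factor $z^n$ guarantees $\operatorname{ind}(f)=n$ regardless of the corrections; one just absorbs the branch of $n\log z$ on each annulus into the data being approximated. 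The tails of $g$ and $h$ near their essential singularities can be left free, since the class $\B^*$ is \emph{not} required here---indeed we \emph{want} $\operatorname{sing}(f^{-1})$ to be unbounded so that escaping Fatou components are allowed.

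The third step is to verify that the approximating $f$ genuinely has the desired Fatou component. For this I would invoke the sufficient condition for a simply connected escaping wandering domain that the abstract (promised as part of this paper) develops: once the orbit $a_k\to a_{k+1}$ is realized with enough contraction and the discs $D(a_k,r_k)$ are mapped well inside $D(a_{k+1},r_{k+1})$ by $f$ with room to spare in the larger annuli $A_k'$, a hyperbolic-contraction / normal-families argument shows there is a Fatou component $U\ni a_0$ with $f^k(U)\subseteq D(a_{k+1},r_{k+1})$, that $U$ is a wandering domain (the $A_k$ are pairwise disjoint, being alternately near $0$ and near $\infty$ according to $e$), and that $U\subseteq I(f)$ with essential itinerary $e$ by construction of the $A_k$. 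Iterating the inclusion gives $\omega(z,f)\subseteq\{0,\infty\}$ for $z\in U$, so $U\subseteq I_e(f)$.

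I expect the main obstacle to be the approximation step: one must choose the closed set on which Runge/Arakelian approximation is applied so that (i) its complement in $\C^*$ is connected and locally connected at $0$ and $\infty$ as required by the approximation theorem, (ii) the prescribed local models on the $A_k$ are mutually consistent (no monodromy obstruction once the $z^n$ factor is extracted), and (iii) the approximation errors are summable against the contraction factors $|c_k|$ so that the perturbation does not destroy the invariance of the disc chain. Balancing the speed at which the annuli march out to $\{0,\infty\}$ against the decay of the errors and the contraction rates---while keeping $g,h$ entire and non-constant so that $0$ and $\infty$ remain essential singularities---is the delicate bookkeeping at the heart of the proof; everything else is a routine consequence of the normal-families criterion and the definition of $I_e(f)$.
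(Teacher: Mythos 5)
Your overall strategy---an Arakelyan-type approximation of a prescribed model on a chain of sets marching towards $0$ and $\infty$ according to $e$, with the index obtained by extracting the factor $z^n$ and absorbing a branch of $n\log z$ into the approximated data---is essentially the paper's. But there is a genuine gap at the step where you conclude that $U$ is a \emph{wandering} domain. You assert this because ``the $A_k$ are pairwise disjoint'', but disjointness of the sets $A_k$ does not imply that they lie in pairwise distinct Fatou components: a single periodic component (e.g.\ a Baker domain) could a priori contain infinitely many of them, and then $U$ would not wander. This is precisely the delicate point, and the paper resolves it with a device absent from your outline: alongside the chain of sets it places attracting fixed points in discs $B_+$ and $B_-=1/B_+$, together with separating discs $B_m$ on the real axis between consecutive members of the chain that are mapped into these attracting basins; the whole configuration is symmetric with respect to $\R$, so every Fatou component meeting $\R$ is symmetric, and a component containing two different members of the chain would have to avoid the intervening attracting discs and hence be multiply connected without surrounding the origin---impossible by Baker's theorem \cite[Theorem~1]{baker87}. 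You need an argument of this kind. The normal-families criterion you plan to invoke (Lemma~\ref{lem:Julia-in-annulus}) does not supply it for arbitrary $e$, because that lemma requires the chain $A_n=M^n(A)$ to be the orbit of a single \emph{affine} map $M$, and no affine orbit accumulates at both $0$ and $\infty$ in a prescribed pattern; that lemma is used in the paper only for the explicit Example~\ref{ex:wand-domain}, not for Theorem~\ref{thm:wandering-domains}.

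A secondary issue is the geometry of your approximation set: round annuli concentric about the origin (plus a path joining them) have disconnected complement in $\CR$, so condition (K$_1$) of Arakelyan's theorem fails and no approximation theorem applies. The paper instead uses annular \emph{sectors} $\{\,|\arg z|\leqslant R\,\}$ symmetric about the positive real axis, whose union with the separating discs and $\overline{D(0,1)}$ is a Carleman set. On each sector the model map is, after subtracting $n\log z$, essentially the constant equal to the log-centre of the target sector, so the error bookkeeping you worry about is trivial here: a fixed admissible error such as $R/2$ suffices and no summability against contraction rates is needed (tangential, decaying errors are required only in the Baker domain construction). With sectors in place of annuli and the separating attracting discs added, the rest of your outline goes through.
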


Observe that, in particular, in Theorem~\ref{thm:wandering-domains} we obtain functions with wandering domains whose essential itinerary is not necessarily a periodic sequence. 

The other type of escaping Fatou component is a Baker domain. The first example of a transcendental entire function with a Baker domain was already given by Fatou \cite{fatou26}: $f(z)=z+1+e^{-z}$. See \cite{rippon08} for a survey on Baker domains. 

A result of Cowen \cite{cowen81} on holomorphic self-maps of $\mathbb D$ whose Denjoy-Woff point lies on $\partial \mathbb D$ led to the following classification of Baker domains by Fagella and Henriksen \cite{fagella-henriksen06}, where $U/f$ is the Riemann surface obtained by identifying points of $U$ that belong to the same orbit under $f$: 
\begin{itemize}
\item a Baker domain $U$ is \textit{hyperbolic} if $U/f$ is conformally equivalent to $\{z\in\C\ :$\linebreak $-s<\Im z<s\}/\Z$ for some $s>0$;
\item a Baker domain $U$ is \textit{simply parabolic} if $U/f$ is conformally equivalent to $\{z\in\C\ :\ \Im z>0\}/\Z$;
\item a Baker domain $U$ is \textit{doubly parabolic} if $U/f$ is conformally equivalent to $\C/\Z$.
\end{itemize}
Note that this classification does not require $f$ to be entire and is valid also for Baker domains of transcendental self-maps of $\C^*$. K\"onig \cite{koenig99} provided a geometric characterisation for each of these types (see Lemma~\ref{lem:bd-koenig}). It is known that if $U$ is a doubly parabolic Baker domain, then $f_{|U}$ is not univalent, but if $U$ is a hyperbolic or simply parabolic Baker domain, then $f_{|U}$ can be either univalent or multivalent. Several examples of each type had been constructed and recently Bergweiler and Zheng completed the table of examples by constructing a transcendental entire function with a simply parabolic Baker domain in which the function is not univalent \cite[Theorem~1.1]{bergweiler-zheng12}.

The only previous examples of Baker domains of transcendental self-maps of~$\C^*$ that the author is aware of are due to Kotus \cite{kotus90}. She used approximation \mbox{theory} to construct two functions with invariant hyperbolic Baker domains whose points escape to zero and to infinity respectively. The following theorem provides functions with Baker domains that have any periodic essential itinerary $e\in\{0,\infty\}^{\N_0}$ and, in particular, Baker domains whose points accumulate to both zero and infinity.

\begin{thm}
\label{thm:baker-domains}
For each periodic sequence $e\in\{0,\infty\}^{\N_0}$ and $n\in\Z$, there exists a trans\-cendental self-map $f$ of $\C^*$ such that $\textup{ind}(f)=n$ and $I_e(f)$ contains a hyperbolic Baker domain. 
\end{thm}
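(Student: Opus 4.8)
The plan is to realise $f$ as a function of the form~\eqref{eqn:bhat} obtained by approximation theory, in the spirit of the proof of Theorem~\ref{thm:wandering-domains} and of Kotus' constructions in~\cite{kotus90}, but arranging the chain of disks along which $f$ is controlled so that it closes up under $f^p$ and lies in a single Fatou component. Write $e=\overline{e_0e_1\cdots e_{p-1}}$, fix $\mu>1$ and $\delta\in(0,1)$ with $\delta>(\mu-1)/(\mu+1)$, and choose round disks $D_i^{(j)}=D(c_i^{(j)},\delta|c_i^{(j)}|)$, $i\in\{0,\dots,p-1\}$, $j\in\N_0$, with the following properties: $\overline{D_i^{(j)}}\subseteq\{|z|>1\}$ if $e_i=\infty$ and $\overline{D_i^{(j)}}\subseteq\{|z|<1\}$ if $e_i=0$; within each fixed $i$ the moduli $|c_i^{(j)}|$ grow (respectively decay) geometrically in $j$ at rate $\mu^{\pm 1}$ and consecutive disks $D_i^{(j)}$, $D_i^{(j+1)}$ overlap (possible by the choice of $\delta$), while for different $i$ the disks at the same level $j$ are disjoint and of comparable outerness; and all the disks $D_i^{(j)}$ are pairwise disjoint apart from the said overlaps and do not accumulate except at $0$ and $\infty$. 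Put $E:=\bigcup_{i,j}\overline{D_i^{(j)}}$; then $\CR\setminus E$ is connected.

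On $E$ we prescribe a holomorphic model $\Phi$ that maps $D_i^{(j)}$ into a fixed sub-disk of $D_{i+1}^{(j)}$ for $i<p-1$, maps $D_{p-1}^{(j)}$ into a fixed sub-disk of $D_0^{(j+1)}$, sends each centre to the next centre, and has bounded distortion on each disk; thus $\Phi$ pushes points forward along the single chain $D_0^{(0)}\to D_1^{(0)}\to\cdots\to D_{p-1}^{(0)}\to D_0^{(1)}\to\cdots$, and $\Phi^p$ carries $D_i^{(j)}$ into $D_i^{(j+1)}$. By an approximation theorem for the class~\eqref{eqn:bhat} (an Arakelian/Runge-type statement as in~\cite{kotus90}; compare the proof of Theorem~\ref{thm:wandering-domains}) we obtain a transcendental self-map $f$ of $\C^*$ with $\textup{ind}(f)=n$ and $|f-\Phi|$ as small as we please on $E$, relative to the local scale $\delta|c_i^{(j)}|$; in particular $f(\overline{D_i^{(j)}})\subseteq D_{i+1}^{(j)}$, $f(\overline{D_{p-1}^{(j)}})\subseteq D_0^{(j+1)}$, and $f$ is univalent with bounded distortion on each $\overline{D_i^{(j)}}$. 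The index equals $n$ thanks to the factor $z^n$ in~\eqref{eqn:bhat}, which is admissible because $E$ does not separate $0$ from $\infty$.

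It remains to identify the Fatou component and its type. Each $D_i^{(j)}$ lies in $F(f)$: the iterate $f^m|_{D_i^{(j)}}$ maps $D_i^{(j)}$ into the disk reached after $m$ steps along the chain, which lies either in $\{|z|>1\}$ or in $\{|z|<1\}$ and has spherical diameter tending to $0$ as $m\to\infty$, so $(f^m|_{D_i^{(j)}})_m$ is a normal family of maps into $\CR$. The tube $T_i:=\bigcup_j D_i^{(j)}$ is connected, since consecutive disks overlap, and is contained in $F(f)$, hence in a single Fatou component $U_i$. As $f^p(T_0)\subseteq T_0$ we get $f^p(U_0)\subseteq U_0$, so $U_0$ is a periodic Fatou component, and since $U_0\cap I(f)\supseteq T_0\neq\emptyset$, the observation recalled in the introduction yields $U_0\subseteq I(f)$; thus $U_0$ is a Baker domain. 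Finally, a point of $T_0$ visits in order the disks $D_0^{(0)},D_1^{(0)},\dots$, which were placed in $\{|z|>1\}$ or $\{|z|<1\}$ according to $e$, so its essential itinerary is exactly $e$; as every point of $U_0$ eventually has the same essential itinerary, $U_0\subseteq I_e(f)$.

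To see that $U_0$ is hyperbolic, let $z_0$ be the centre of $D_0^{(0)}$ and $z_m=f^m(z_0)$; by bounded distortion $z_m$ stays close to the centre of the chain disk $D_i^{(j)}$ it currently occupies, so $|z_{m+1}-z_m|\gtrsim\delta|c_i^{(j)}|$, the next disk of the chain being disjoint from $D_i^{(j)}$. To bound $\dist(z_m,\partial U_0)$ from above we also include in $E$, disjoint from everything else, a small disk $\widetilde{D}_i^{(j)}$ at distance $\asymp\delta|c_i^{(j)}|$ from each $D_i^{(j)}$, prescribing $\Phi$ there to be $z\mapsto\widetilde{c}_i^{(j)}+2(z-\widetilde{c}_i^{(j)})$ with $\widetilde{c}_i^{(j)}$ the centre of $\widetilde{D}_i^{(j)}$; then $f$ has a repelling fixed point near $\widetilde{c}_i^{(j)}$, a point of $J(f)$ at distance $\asymp\delta|c_i^{(j)}|$ from $D_i^{(j)}$, so $\dist(z_m,\partial U_0)\leq\dist(z_m,J(f))\lesssim\delta|c_i^{(j)}|$. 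Hence $|z_{m+1}-z_m|\geq c\,\dist(z_m,\partial U_0)$ for a fixed $c>0$ and all $m$, and by K\"onig's geometric characterisation of the types of Baker domains (Lemma~\ref{lem:bd-koenig}, \cite{koenig99}) $U_0$ is hyperbolic. The main technical point of the argument is to carry out the approximation so that a single $f$ of the form~\eqref{eqn:bhat} realises the closed-up chain, the guard disks and the value $n$ of the index simultaneously; once that is in place the remaining verifications are those of Theorem~\ref{thm:wandering-domains}, mutatis mutandis.
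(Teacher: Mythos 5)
There is a genuine gap, and it sits exactly at the point where the paper has to work hardest: the existence of your model map $\Phi$. You require, for each $i$, a \emph{single} holomorphic function $\Phi$ on the connected tube $T_i=\bigcup_j D_i^{(j)}$ that maps every disc $D_i^{(j)}=D(c_i^{(j)},\delta|c_i^{(j)}|)$ into a \emph{compactly contained} sub-disc of $D_{i+1}^{(j)}$ with a margin that is a fixed proportion of the local scale $\delta|c_i^{(j)}|$ (this uniform relative margin is what lets you get away with a uniform relative approximation error afterwards). No such $\Phi$ is constructed, and the natural candidates fail: a similarity $z\mapsto\alpha z$ maps $D(c,\delta|c|)$ \emph{onto} $D(\alpha c,\delta|\alpha c|)$, with zero margin in the angular direction, and any map of the form $z\mapsto \alpha z^{d}$ with $d>1$ increases the relative radius, while $d<1$ shrinks it but then the composition around the cycle no longer pushes orbits to $0$ or $\infty$. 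Worse, the overlaps that you need to make $T_i$ connected impose extra compatibility: a point $z$ in $D_i^{(j)}\cap D_i^{(j+1)}$ must be sent into $D_{i+1}^{(j)}\cap D_{i+1}^{(j+1)}$ shrunk by the margin on \emph{both} sides, and simple explicit attempts (e.g.\ $w\mapsto w+\ell-c\sin(2\pi w/\ell)$ in logarithmic coordinates) violate this at the overlap lenses. This difficulty is not incidental: it is precisely why the paper abandons compact building blocks for the Baker-domain case and instead uses the unbounded sectors $\omega_p^m\sqrt[p]{W}$ as absorbing regions, where the model $T_{\lambda,p}$ maps the sector into itself with a gap $\delta_p(r)\sim 2(\lambda-1)e^{-pr}/p$ that \emph{decays}; uniform approximation is then useless and one needs tangential approximation with error $\varepsilon(r)\sim r^{-(N+1)}$ (Lemma~\ref{lem:approx-sectors}), followed by the summability argument for the accumulated multiplicative errors $C_n$ to show that orbits really do escape geometrically and that the K\"onig quantities $c_n$ stay bounded below. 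Your proposal contains neither the decaying-error approximation nor the error-summation step, and the claim that ``$|f-\Phi|$ as small as we please on $E$, relative to the local scale'' suffices is unsubstantiated without the uniform-margin model.

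Two further points, more briefly. First, even granting $\Phi$, approximating a function prescribed on a set accumulating at \emph{both} essential singularities by something of the form $g(z)+h(1/z)$ is not a citation to Arakelyan: one must prevent $g$ from spoiling the construction near $0$ and $h(1/z)$ from spoiling it near $\infty$, which the paper achieves with the weights $z^{N+1}$ and $1/z^{N+1}$ and correspondingly weighted error functions; this should be spelled out. Second, your guard discs $\widetilde{D}_i^{(j)}$ are unnecessary: since $0\notin U_0\subseteq\C^*$, one has $\dist(f^{np}(z_0),\partial U_0)\leqslant|f^{np}(z_0)|$, which already gives the upper bound needed for $c_n\geqslant c>0$ (this is how the paper argues); on the other hand, to conclude hyperbolicity rather than merely exclude the doubly parabolic case, the lower bound on $c_n$ must be uniform over $z_0\in U_0$ (or at least $\limsup_n c_n(z_0)\geqslant c$ for every $z_0$), which requires knowing that every orbit in $U_0$ eventually enters the tube — an absorbing-region statement you should justify rather than verify the estimate only at the centre of $D_0^{(0)}$.
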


\begin{rmk}
We observe that our method can be modified to produce doubly parabolic Bakers domains as well. However, the cons\-truction of simply parabolic Baker domains using approximation theory seems more difficult.
\end{rmk}


We also give the first explicit examples of transcendental self-maps of $\C^*$ with wandering domains and Baker domains. They all have the property that in a neighbourhood of infinity they behave like known examples of transcendental entire functions with wandering domains and Baker domains.

\begin{ex}
\label{ex:main-ex}
The following transcendental self-maps of $\C^*$ have escaping Fatou components:
\begin{enumerate}
\item[(i)] The function $f(z)=z\exp\left(\frac{\sin z}{z}+\frac{2\pi}{z}\right)$ has a bounded wandering domain that escapes to infinity (see Example~\ref{ex:wand-domain}).
\item[(ii)] The function $f(z)=2z\exp\bigl(\exp(-z)+1/z\bigr)$ has an invariant hyperbolic Baker domain that contains a right half-plane and whose points escape to infinity (see Example~\ref{ex:hyp-baker-domain}).
\item[(iii)] The function $f(z)=z\exp\left((e^{-z}+1)/z\right)$ has an invariant doubly parabolic Baker domain that contains a right half-plane and whose point escape to infinity (see Example~\ref{ex:dpar-baker-domain}).
\end{enumerate}
\end{ex} 

It seems hard to find explicit examples of functions with Baker domains and wandering domains with any given essential itinerary, but it would be interesting to have a concrete example of a function with an escaping Fatou component that accumulates to both zero and infinity. It also seems difficult to find explicit examples of functions with simply parabolic Baker domains.

We remark that in order to show that the function from Example~\ref{ex:main-ex}~(i) has a simply connected escaping wandering domain we introduced a new criterion (see Lemma~\ref{lem:Julia-in-annulus}) which is of more general interest. 

Let $f$ be a transcendental self-map of $\C^*$, then there exists a transcendental entire function $\tilde{f}$ such that $\exp \circ \,\tilde{f}=f\circ \exp$; we call $\tilde{f}$ a \textit{lift} of $f$. If the function $f$ has a wandering domain, then $\tilde{f}$ has a wandering domain, while if $f$ has a Baker domain, then $\tilde{f}$ has either a Baker domain (of the same type) or a wandering domain; see Lemmas~\ref{lem:semiconj-wd}~and~\ref{lem:semiconj-bd}. 

It is important that in both Theorems~\ref{thm:wandering-domains}~and~\ref{thm:baker-domains} we can choose the index of the function since, for example, if $\textup{ind}(f)\neq 1$, then $f$ does not have Herman rings. In \cite{martipete4} the author compares the escaping set of $f$ with that of a lift $\tilde{f}$ of $f$ according to $\textup{ind}(f)$.

Finally, observe that our constructions using approximation theory can also produce holomorphic self-maps of $\C^*$ of the form $f(z)=z^n\exp(g(z))$, with $n\in\Z$ and $g$ a non-constant entire function. In particular, they can provide new examples of transcendental entire functions with no zeros in $\C^*$ that have wandering domains and Baker domains.






\vspace{5pt}

\noindent 
\textbf{Structure of the paper.} In Sections 2 and 3 we prove that the functions from Example~\ref{ex:main-ex} have the properties that we state. In Section 4 we introduce the tools from approximation theory that we will use in Sections 5 and 6 to construct functions with escaping wandering domains and Baker domains respectively. Theorem~\ref{thm:wandering-domains} is proved in Section 5 and Theorem~\ref{thm:baker-domains} is proved in Section 6.

\vspace{5pt}

\noindent
\textbf{Notation.} In this paper $\N_0=\N\cup\{0\}=\{0,1,2,\hdots\}$ and, for $z_0\in \C$ and $r>0$, we define
$$
D(z_0,r):=\{z\in\C\ :\ |z-z_0|<r\},\quad \mathbb H_r:=\{z\in\C\ :\ \Re z>r\}. 
$$

\vspace{10pt}

\noindent
\textbf{Acknowledgments.} The author would like to thank his supervisors Phil Rippon and Gwyneth Stallard for their support and guidance in the preparation of this paper.

\section{Explicit functions with wandering domains}

\label{sec:explicit-wd}

As mentioned in the introduction, the author is not aware of any previous explicit examples of transcendental self-maps of $\C^*$ with wandering domains or Baker domains as all such functions were constructed using approximation theory.


Kotus \cite{kotus90} showed that transcendental self-maps of $\C^*$ can have escaping wandering domains by constructing examples of such functions using approximation theory. Here we give an explicit example of such a function by modifying a transcendental entire function that has a wandering domain. 

\begin{ex}
The function $f(z)=z\exp\bigl(\frac{\sin z}{z}+\frac{2\pi}{z}\bigr)$ is a transcendental self-map of~$\C^*$ which has a bounded simply connected wandering domain that escapes to infi\-nity (see Figure~\ref{fig:wand-domain}).
\label{ex:wand-domain}
\end{ex}

\begin{figure}[h!]
\includegraphics[width=.49\linewidth]{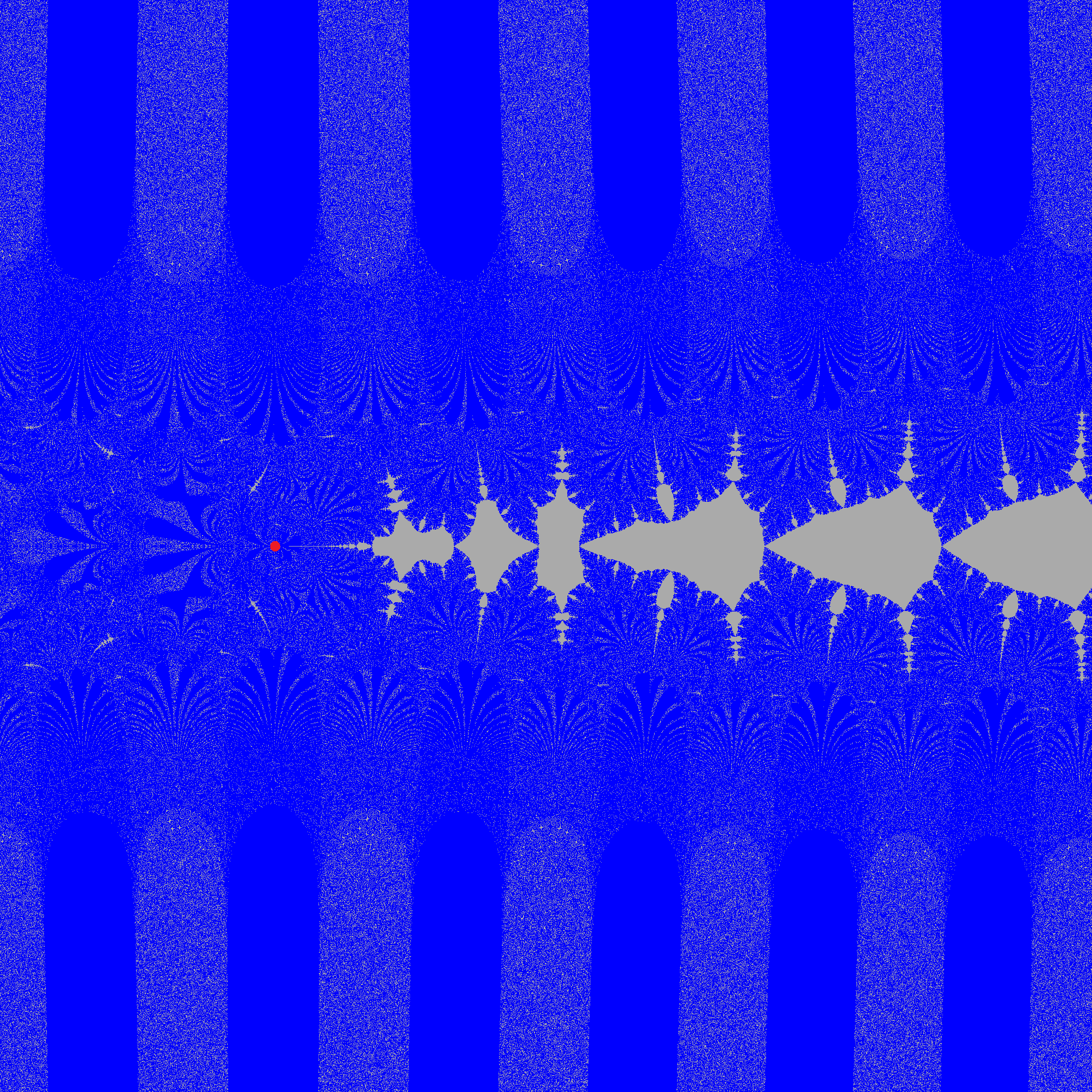}
\includegraphics[width=.49\linewidth]{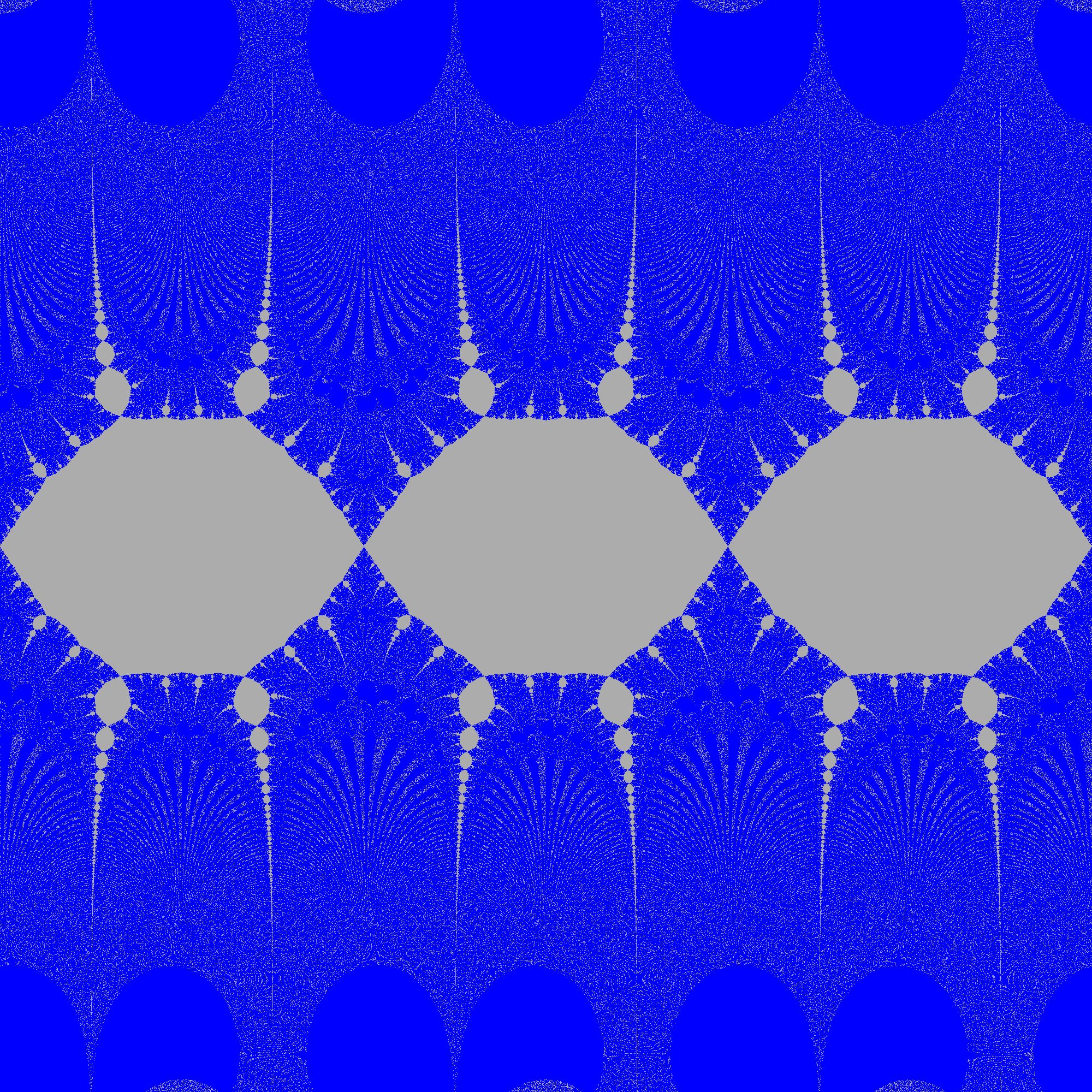}
\caption[Phase space of a transcendental self-map of $\C^*$ which has a wandering domain]{Phase space of the function $f(z)=z\exp\left(\frac{\sin z}{z}+\frac{2\pi}{z}\right)$ from Example~\ref{ex:wand-domain} which has a wandering domain. On the right, the wandering domain for large values of $\textup{Re}\, z$.}
\label{fig:wand-domain}
\end{figure}

Baker \cite[Example~5.3]{baker84} (see also \cite[Example~2]{rippon-stallard08}) studied the dynamics of the trans\-cendental entire function $f_1(z)=z+\sin z+2\pi$ that has a wandering domain containing the point $z=\pi$ that escapes to infinity. Observe that the function $f$ from Example~\ref{ex:wand-domain} satisfies that
\begin{equation}
f(z)=z+\sin z+2\pi+o(1)\quad \mbox{ as } \mbox{Re}\,z\rightarrow +\infty
\label{eq:ex-wand-domain}
\end{equation}
in a horizontal band defined by $|\textup{Im}\, z|<K$ for some $K>0$.

We first prove a general result which gives a sufficient condition that implies that a function has a bounded wandering domain (see Figure~\ref{fig:annuli-lemma}) using some of the ideas from \cite[Lemma~7(c)]{rippon-stallard08}. Given a doubly connected open set $A\subseteq \C$, we define the \textit{inner boundary}, $\partial_\textup{in}A$, and the \textit{outer boundary}, $\partial_\textup{out}A$, of $A$ to be the boundary of the bounded and unbounded complementary components of $A$ respectively.

\begin{lem}
Let $f$ be a function that is holomorphic on $\C^*$, let $M$ be an affine map, let $A$ be a doubly connected closed set in $\C^*$ with bounded complementary component $B$, and let $C\subseteq B$ be compact. Put
$$
A_n:=M^n(A),\quad B_n:=M^n(B)\quad \mbox{ and }\quad C_n:=M^n(C)\quad \mbox{ for } n\in\N_0,
$$
and suppose that 
\begin{itemize}
\item $A_n\cup B_n\subseteq \C^*$ for $n\in\N_0$,
\item the sets $\{B_n\}_{n\in\N_0}$ are pairwise disjoint,
\item $f(\partial_\textup{in}\,A_n)\subseteq C_{n+1}$ for $n\in\N_0$,
\item $f(\partial_\textup{out}\,A_n)\subseteq \C^*\setminus (A_{n+1}\cup B_{n+1})$ for $n\in\N_0$.
\end{itemize}
Then $f$ has bounded simply connected wandering domains $\{U_n\}_{n\in\N_0}$ such that
$$
\partial_\textup{in}\,A_n\subseteq U_n \quad \mbox{ and } \quad \partial U_n \subseteq A_n \quad \mbox{ for } n\in\N_0.
$$
\label{lem:Julia-in-annulus}
\end{lem}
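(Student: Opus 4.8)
The plan is to extract from the two boundary conditions the single clean fact that $f(\overline{B_n})\subseteq B_{n+1}$ for every $n$, then to use Montel's theorem to locate a Fatou component through each $\partial_{\textup{in}}A_n$, and finally to pin down its shape from the outer-boundary condition. For the first fact, fix $n$. Since $A_n\cup B_n\subseteq\C^*$ we have $0\notin\overline{B_n}=B_n\cup\partial_{\textup{in}}A_n$, so $f$ is holomorphic on a neighbourhood of the compact set $\overline{B_n}$ and $f(B_n)$ is bounded. Because $A$ is doubly connected, its bounded complementary component $B$ is simply connected, so $\CR\setminus B_{n+1}$ is connected; and the hypothesis $f(\partial_{\textup{in}}A_n)\subseteq C_{n+1}\subseteq B_{n+1}$ says that $f(\partial_{\textup{in}}A_n)$ misses $\CR\setminus B_{n+1}$. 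Hence $\CR\setminus B_{n+1}$ lies in a single component $G$ of $\CR\setminus f(\partial_{\textup{in}}A_n)$, namely the one containing $\infty$, on which the degree $\deg(f,B_n,\,\cdot\,)$ is constant and equals $0$ (as $f(B_n)$ is bounded). By the argument principle $f(B_n)\cap G=\emptyset$, that is $f(\overline{B_n})\subseteq B_{n+1}$, and iterating gives $f^m(\overline{B_n})\subseteq B_{n+m}$ for every $m\ge 0$.

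Next, let $V_n$ be the component of $f^{-1}(B_{n+1})$ that contains the connected set $\overline{B_n}$, so $f^m(V_0)\subseteq B_m$ for all $m\ge 1$. Choose distinct points $w_i\in B_i$, $i=1,2,3$ (the $B_n$ are nonempty since $B$ is). Because the $B_n$ are pairwise disjoint, $f^m|_{V_0}$ omits $w_1,w_2,w_3$ for every $m\notin\{1,2,3\}$; since discarding finitely many maps does not affect normality, $\{f^m|_{V_0}\}$ is normal by Montel's theorem, and $V_0\subseteq F(f)$. Let $U_n$ denote the Fatou component containing $V_n$; then $\partial_{\textup{in}}A_n\subseteq\overline{B_n}\subseteq V_n\subseteq U_n$, and since $f(U_0)$ is a connected subset of $F(f)$ meeting $U_1$ we get $f(U_n)\subseteq U_{n+1}$, hence $f^m(U_n)\subseteq U_{n+m}$ for all $m$.

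Write $K_n:=A_n\cup B_n$, a compact set with $\partial K_n=\partial_{\textup{out}}A_n$, and let $V_\infty^{(n)}:=\CR\setminus K_n$ be the unbounded complementary component of $A_n$; note $0\in V_\infty^{(n)}$. Now $f(\partial_{\textup{out}}A_n)\subseteq\C^*\setminus K_{n+1}$ must be used to prevent $U_n$ from reaching $\partial_{\textup{out}}A_n$: realising $U_n$ as the increasing union of the pull-back neighbourhoods $V_n^{(0)}:=V_n$ and $V_n^{(k+1)}:=$ the component of $f^{-1}(V_{n+1}^{(k)})$ containing $\overline{V_n^{(k)}}$, an induction using \emph{both} boundary hypotheses shows $\overline{V_n^{(k)}}\subseteq\operatorname{int}K_n$ and $\overline{V_n^{(k)}}\subseteq V_n^{(k+1)}\subseteq F(f)$ for all $k$; since $0\notin U_n\subseteq\C^*$, this union is the whole component $U_n$ and $\overline{U_n}\subseteq K_n$. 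Consequently each $U_n$ is bounded and $\partial U_n\subseteq K_n\setminus B_n=A_n$, and a routine topological argument using $\overline{B_n}\subseteq U_n$ and $\overline{U_n}\subseteq K_n$ (if $\CR\setminus U_n$ had a component surrounded by $U_n$ it could lie neither in $B_n$ nor in $V_\infty^{(n)}$, the latter because it would put $0$ inside $U_n$) shows $U_n$ is simply connected.

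Finally, the $U_n$ are pairwise disjoint: if $U_k=U_l$ with $k<l$, then $U_k$ is a periodic Fatou component of $f$, of period $p\mid l-k$, containing every $B_{k+jp}$, $j\ge 0$; these are pairwise disjoint with $\operatorname{diam}B_{k+jp}=|M'|^{k+jp}\operatorname{diam}B$, while the orbit of any $z\in B_k$ satisfies $f^{k+jp}(z)\in B_{k+jp}$ for all $j$, and running through the classification of periodic Fatou components of a transcendental self-map of $\C^*$ (attracting or parabolic basin, rotation domain, Baker domain) — in each of which the relevant sub-orbit is confined to a single compact set or is forced to $\{0,\infty\}$ — contradicts the size and location of the sets $B_{k+jp}$. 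Hence $\{U_n\}$ is a wandering orbit with the stated properties. The step I expect to be the main obstacle is the confinement $\overline{U_n}\subseteq K_n$ of the \emph{entire} Fatou component: trapping the explicit pull-back domain $V_n$ inside $\operatorname{int}K_n$ is straightforward, but propagating the outer-boundary condition through all forward iterates so as to control $U_n$ itself, and thereby obtain $\partial U_n\subseteq A_n$, is the delicate point.
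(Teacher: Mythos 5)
Your opening steps are fine, and in places more careful than the paper's own write-up: the degree/argument-principle deduction of $f(\overline{B_n})\subseteq B_{n+1}$ from $f(\partial_{\textup{in}}A_n)\subseteq C_{n+1}$, the application of Montel's theorem to place $\overline{B_n}$ in $F(f)$, and the definition of the components $U_n$ all match the paper. The proof breaks down, however, precisely at the point you yourself flag as ``the delicate point'': the containment $\overline{U_n}\subseteq A_n\cup B_n$. Your proposed mechanism --- exhausting $U_n$ by the increasing union of pull-back components $V_n^{(k)}$ of the sets $B_{n+k}$ --- does not work: that union is an open connected subset of $U_n$, but there is no reason it should equal $U_n$, since a Fatou component is defined by normality and may contain points whose forward orbits never enter any $B_m$. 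The clause ``since $0\notin U_n$, this union is the whole component'' is a non sequitur. So the key inclusion $\partial U_n\subseteq A_n$ is asserted rather than proved.

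The paper closes this gap with a substantive argument that your proposal lacks. It considers the rescaled iterates $\Phi_k:=M^{-k}\circ f^k$ on $U_n$ and proves they form a normal family: by Montel's theorem if the $U_n$ are pairwise distinct, and by Baker's distortion lemma if some $U_m$ is periodic (hence a Baker domain). It then invokes the Keen--Lakic theorem on non-autonomous iteration (Lemma~\ref{lem:keen-lakic}), applied to the maps $\varphi_k=M^{-(k+1)}\circ f\circ M^{k}$, which send $B_n$ into the compactly contained set $C_n$, to conclude that every locally uniform limit of $\Phi_{k_j}$ on $U_n$ is a constant $\alpha_n\in\overline{B_n}$. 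Finally, if some $z_0\in\partial_{\textup{out}}A_n$ lay in $U_n$, a curve $\gamma\subseteq U_n$ joining $z_0$ to $B_n$ would have images $f^k(\gamma)$ meeting both $B_{n+k}$ and $\partial_{\textup{out}}A_{n+k}$ for every $k$ (this is where the outer-boundary hypothesis enters), so that $\Phi_{k_j}(\gamma)$ always contains points of $B_n$ and of $\partial_{\textup{out}}A_n$ and hence has diameter bounded below --- contradicting convergence to a constant. Note also that your disjointness argument is circular as written: the classification of periodic Fatou components does not by itself rule out the Baker-domain alternative (an orbit escaping through the $B_{k+jp}$ is entirely consistent with a Baker domain); that case is excluded only because Baker domains are unbounded while $U_k\subseteq A_k\cup B_k$ is bounded, i.e.\ only after the very containment whose proof is missing.
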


\begin{figure}[h!]
\centering
\vspace*{-10pt}
\def\svgwidth{\linewidth}
\input{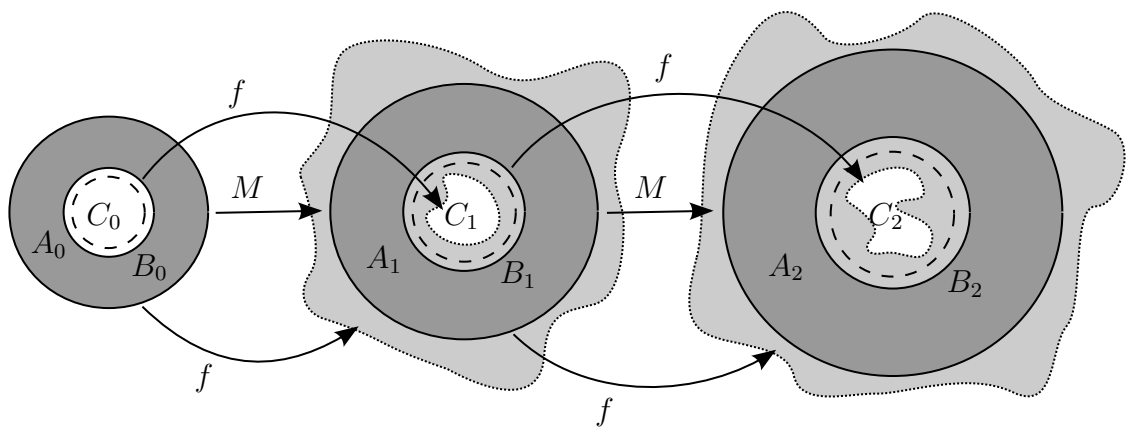}
\vspace{15pt}
\caption[Sketch of a construction which implies that a function has a wandering domain]{Sketch of the construction in Lemma \ref{lem:Julia-in-annulus}.}
\label{fig:annuli-lemma} 
\end{figure}

In order to prove this lemma, we first need the following result on limit functions of holomorphic iterated function systems by Keen and Lakic \cite[Theorem~1]{keen-lakic03}. 

\begin{lem}
Let $X$ be a subdomain of the unit disc $\mathbb D$. Then all limit functions of any sequence of functions $(F_n)$ of the form
$$
F_n:=f_n\circ f_{n-1}\circ \cdots \circ f_2\circ f_1\quad \mbox{ for } n\in\N,
$$
where $f_n:\mathbb D\to X$ is a holomorphic function for all $n\in \N$, are constant functions in $\overline{X}$ if and only if $X\neq\mathbb D$.
\label{lem:keen-lakic}
\end{lem}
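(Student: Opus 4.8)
The plan is to establish the equivalence by treating the two implications separately; the content is entirely in ``$X\ne\mathbb D\Rightarrow$ all limit functions are constant''. The reverse implication is immediate: if $X=\mathbb D$, take $f_n=\mathrm{id}_{\mathbb D}$ for every $n$, so $F_n=\mathrm{id}_{\mathbb D}$ and the sequence $(F_n)$ has the non-constant limit function $\mathrm{id}_{\mathbb D}$; hence the conclusion ``all limit functions constant'' already fails.

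So assume $X\subsetneq\mathbb D$; then $X$ is hyperbolic, and I would work with the hyperbolic metrics $\rho_X,\rho_{\mathbb D}$, noting that by Schwarz--Pick the inclusion $X\hookrightarrow\mathbb D$ is a contraction and, since it is not a covering, the ratio of densities $\eta_{\mathbb D}/\eta_X$ is $<1$ at every point of $X$ (hence $\le\theta<1$ on every compact subset of $X$). As $F_n(\mathbb D)\subseteq X\subseteq\mathbb D$, the family $(F_n)$ is uniformly bounded, hence normal, and every limit function $F=\lim_j F_{n_j}$ is holomorphic into $\overline X$. I would argue by contradiction, assuming some $F$ is non-constant. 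By the maximum principle $|F|<1$ on $\mathbb D$, so $F(\mathbb D)$ is an open subset of $\overline X\cap\mathbb D$; since the boundary of the open set $X$ has empty interior, $F$ must take a value in $X$, say $F(z^\ast)\in X$, and I fix a small neighbourhood $N^\ast$ of $z^\ast$ with $F(\overline{N^\ast})$ a compact subset of $X$.

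The key computation uses the monotonicity coming from $F_{n+1}=f_{n+1}\circ F_n$: applying Schwarz--Pick to $f_{n+1}\colon\mathbb D\to X$ and then to the inclusion gives, for any $z,z'$,
$$
\rho_X\big(F_{n+1}(z),F_{n+1}(z')\big)\ \le\ \rho_{\mathbb D}\big(F_n(z),F_n(z')\big)\ \le\ \rho_X\big(F_n(z),F_n(z')\big),
$$
so $d_n:=\rho_X(F_n(z),F_n(z'))$ is non-increasing. For $z,z'\in N^\ast$ and $j$ large, $F_{n_j}(z)$ and $F_{n_j}(z')$ are at $\rho_X$-distance $d_{n_j}\le d_1$ and close to $F(z^\ast)$, hence lie, together with the $\rho_X$-geodesic joining them, in a fixed compact subset $\widetilde K\subseteq X$; on $\widetilde K$ one has $\eta_{\mathbb D}/\eta_X\le\theta<1$, so integrating along that geodesic gives $\rho_{\mathbb D}(F_{n_j}(z),F_{n_j}(z'))\le\theta\,\rho_X(F_{n_j}(z),F_{n_j}(z'))=\theta\,d_{n_j}$. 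Combined with $d_{n_j+1}\le\rho_{\mathbb D}(F_{n_j}(z),F_{n_j}(z'))$ and the monotonicity of $(d_n)$ this yields $d_{n_{j+1}}\le\theta\,d_{n_j}$, so $d_n\to0$; passing to the limit, $F(z)=F(z')$ for all $z,z'\in N^\ast$, and the identity theorem forces $F$ to be constant on $\mathbb D$ --- a contradiction. (That the constant value lies in $\overline X$ is clear, since each $F_n$ maps into $X$.)

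I expect the delicate point to be exactly the claim that the orbits $F_{n_j}(z)$ stay in a \emph{fixed compact subset of} $X$: what must be ruled out is their drifting toward $\partial X$ (drifting toward $\partial\mathbb D$ is already excluded by $|F|<1$ in the non-constant case), and this is precisely why one first produces a point $z^\ast$ with $F(z^\ast)\in X$ --- only there does the hyperbolic density ratio stay bounded away from $1$, so that the strict contraction of the inclusion accumulates geometrically along the composition.
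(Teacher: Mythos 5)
Your argument is correct, and the key point is worth noting up front: the paper does not prove this lemma at all --- it is quoted verbatim from Keen and Lakic \cite{keen-lakic03}, so there is no in-paper proof to compare against. Your reconstruction is the standard hyperbolic-metric argument behind that result: the trivial direction via $f_n=\mathrm{id}$, and for $X\subsetneq\mathbb D$ the strict Schwarz--Pick inequality $\eta_{\mathbb D}<\eta_X$ on $X$ (the inclusion is not a covering), combined with the monotonicity $\rho_X(F_{n+1}(z),F_{n+1}(z'))\leqslant \rho_{\mathbb D}(F_n(z),F_n(z'))\leqslant \rho_X(F_n(z),F_n(z'))$. You correctly identify and resolve the only delicate issue, namely that strict contraction is only uniform on compact subsets of $X$: assuming a non-constant limit function $F$, the open mapping theorem forces $F(\mathbb D)$ to meet $X$ (since $\partial X$ has empty interior), and near such a point the orbits $F_{n_j}(z)$, $F_{n_j}(z')$ are trapped in a compact subset of $X$, so the density ratio is bounded by some $\theta<1$ along the connecting paths and $d_{n_j}\to 0$ geometrically, contradicting non-constancy via the identity theorem. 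Two small points you use implicitly and could state: the set $\{w\in X:\rho_X(w,K')\leqslant d_1\}$ is compact because the Poincar\'e metric of a hyperbolic domain is complete (alternatively, replace geodesics by almost-minimizing paths of length $\leqslant d_{n_j}+\epsilon$, which avoids Hopf--Rinow); and the inequality $d_{n_{j+1}}\leqslant d_{n_j+1}$ needs $n_{j+1}>n_j$, which of course holds for a subsequence. With those remarks the proof is complete and self-contained.
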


We now proceed to prove Lemma~\ref{lem:Julia-in-annulus}.

\begin{proof}[Proof of Lemma~\ref{lem:Julia-in-annulus}]
Since $f(\overline{B_n})\subseteq C_{n+1}\subseteq B_{n+1}$, the iterates of $f$ on each set $\overline{B_n}$ omit more than three points and hence, by Montel's theorem, the sets $\{\overline{B_n}\}_{n\in\N_0}$ are all contained in $F(f)$. For $n\in \N_0$, let $U_n$ denote the Fatou component of $f$ that contains $\overline{B_n}$. We now show that the functions
$$
\Phi_{k}(z):=M^{-k}(f^k(z))\quad \mbox{ for } k\in\N_0,
$$
form a normal family in $U_n$ for all $n\in\N_0$.

Suppose first that the Fatou components $\{U_n\}_{n\in\N_0}$ are not distinct. Then there are two sets $B_m$ and $B_{m+p}$ with $m\in\N_0$ and $p>0$ which lie in the same Fatou components $U_m=U_{m+p}$. Then, since $f^p(B_m)\subseteq B_{m+p}$ and $B_n\to \infty$ as $n\to \infty$, $U_m$ must be periodic and in $I(f)$, and hence a Baker domain.

Let $z_m\in B_m$ and let $K$ be any compact connected subset of $U_m$ such that $K\supseteq B_m$. Then by Baker's distortion lemma (see \cite[Lemma~6.2]{martipete1} or \cite[Lemma~2.22]{martipete} for a proof of the version of this result that we use here), there exist constants \mbox{$C(K)>1$} and $n_0\in\N_0$ such that 
$$
|f^k(z)|\leqslant C(K) |f^k(z_m)|\quad \mbox{ for } z\in K,\ k\geqslant n_0.
$$
Since $M$, and hence $M^{-k}$, is an affine transformation, $M^{-k}$ preserves the ratios of distances, so
$$
|\Phi_k(z)|=|M^{-k}(f^k(z))|\leqslant C(K)|M^{-k}(f^k(z_m))|=C(K)|z_m'|
$$
where $z_m'\in B_m$ satisfies $M^k(z_m')=f^k(z_m)$. Hence the family $\{\Phi_k\}_{k\in \N_0}$ is locally uniformly bounded on $U_m$, and hence is normal on $U_m$.

Suppose next that the Fatou components $\{U_n\}_{n\in\N_0}$ are disjoint. In this case we consider the sequence of functions 
$$
\varphi_k(z):=M^{-(k+1)}(f(M^k(z)))\quad \mbox{ for } k\in\N_0,
$$
which are defined on $U_n$, for $n\in\N_0$. Then
\begin{equation}
\Phi_k(z) = (\varphi_{k-1} \circ \cdots \circ  \varphi_1 \circ \varphi_0)(z) = M^{-k}(f^k(z))\quad \mbox{ for } k\in\N_0.
\label{eq:annuli-lem-1}
\end{equation}
Since the Fatou components $\{U_n\}_{n\in\N_0}$ are pairwise disjoint and
$$
f^k(U_n)\subseteq U_{n+k},
$$
we deduce that
$$
f^k(U_n)\cap B_{n+k+1}=\emptyset
$$
and hence
$$
\Phi_k(U_n)\cap B_{n+1}=\emptyset\quad \mbox{ for } k,n\in\N_0.
$$
Thus $\{\Phi_k\}_{k\in\N_0}$ is normal on each $U_n$, by Montel's theorem, as required.

Now take $n\in\N_0$, and let $\{\Phi_{k_j}\}_{j\in\N_0}$ be a locally uniformly convergent subsequence of $\{\Phi_k\}_{k\in\N_0}$ on $B_n$. Note that
$$
M^k(B_n)=B_{n+k} \quad \mbox{ so } \quad f(M^k(B_n))\subseteq C_{n+k+1}
$$
and hence, for $k\in \N_0$, 
$$
\varphi_k(B_n)=M^{-(k+1)}(f(M^k(B_n)))\subseteq M^{-(k+1)}(C_{n+k+1})=C_n.
$$
We can now apply Lemma~\ref{lem:keen-lakic}, after a Riemann mapping from $B_n$ to the open unit disc $\mathbb D$, to deduce from \eqref{eq:annuli-lem-1} that there exists $\alpha_n\in \overline{B_n}$ such that, for all $z\in U_n$,
$$
\Phi_{k_j}(z)\to\alpha_n\quad \mbox{ as } j\to\infty.
$$

To complete the proof that $U_n$ is bounded by $\partial_\textup{out}\,A_n$ for all $n\in\N$, suppose to the contrary that there is a point $z_0\in\partial_\textup{out}\,A_n$ that lies in $U_n$ for some $n\in\N$. Let $\gamma\subseteq U_n$ be a curve that joins $z_0$ to a point $z_1\in B_n$. Since $\gamma$ is compact, $\Phi_{k_j}(\gamma)\to \alpha$ as $j\to\infty$ which contradicts the fact that $f^k(\gamma)\cap \partial_\textup{out}\,A_{n+k}\neq\emptyset$ for all $k\in\N$ (this follows from the hypothesis that $f(\partial_\textup{out}\,A_n)\subseteq (A_{n+1}\cup B_{n+1})^c$ for $n\in\N_0$). Thus, $\partial  U_n\subseteq A_n$ for all $n\in\N$, and so the proof is complete.
\end{proof}

We now use Lemma~\ref{lem:Julia-in-annulus} to show that the function $f$ from Example~\ref{ex:wand-domain} has a bounded wandering domain that escapes to infinity along the positive real axis.

\begin{proof}[Proof of Example~\ref{ex:wand-domain}]
 


The transcendental entire function $g(z)=z+\sin z$ has\linebreak superattracting fixed points at the odd multiples of $\pi$. For $n\in\N_0$, take\linebreak \mbox{$B_n := D((2n+1)\pi,r)$} and $C_n:=D((2n+1)\pi,r/2)$ for some $r>0$ sufficiently small that \mbox{$g(B_n)\subseteq C_{n}$} and put
$$
R_n:= \{z\in \C\ :\ |\textup{Re}\, z-(2n+1)\pi|\leqslant 3\pi/2,\ |\textup{Im}\, z|\leqslant 3\}.
$$
It follows from a straightforward computation that $g(\partial R_n)\subseteq R_n^c$ for all $n\in\N_0$ (see Figure~\ref{fig:spiral}). 


\begin{figure}[h!]
\centering
\def\svgwidth{.65\linewidth}
\input{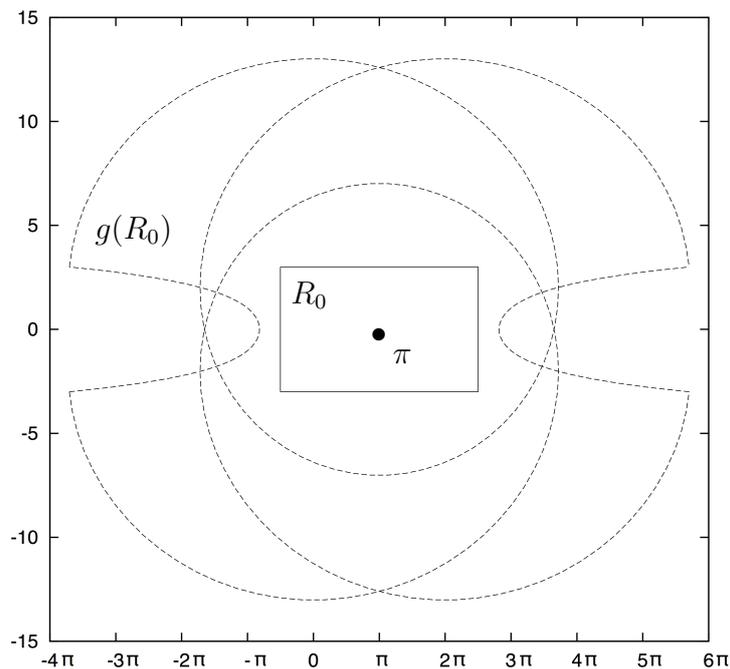} 
\caption[Image of a rectangle by the map~\mbox{$g(z)=z+\sin z$}]{Rectangle $R_0$ and its image under $g(z)=z+\sin z$.}
\label{fig:spiral}
\end{figure}

Then, by \eqref{eq:ex-wand-domain}, there exists $N\in\N_0$ such that $f(B_n)\subseteq C_{n+1}$ and $f(\partial R_n)\subseteq R_{n+1}^c$ for all $n>N$. Thus, we can apply Lemma~\ref{lem:Julia-in-annulus} to $f$ with $M(z)=z+2\pi$ and $A_n:=R_n\setminus B_n$ for $n>N$ and conclude that the function $f$ has wandering domains $U_n$ that contain $B_n$ and whose boundary is contained in $R_n$.
\end{proof}

The next lemma relates the wandering domains of a transcendental self-map of~$\C^*$ and a lift of it.

\begin{lem}
Let $f$ be a transcendental self-map of $\C^*$ and let $\tilde{f}$ be a lift of $f$. Then, if $U$ is a wandering domain of $f$, every component of $\exp^{-1}(U)$ is a wandering domain of $\tilde{f}$ which must be simply connected.
\label{lem:semiconj-wd}
\end{lem}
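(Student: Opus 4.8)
The plan is to transport the wandering structure of $U$ to $\exp^{-1}(U)$ along the universal covering $\exp\colon\C\to\C^*$ and then read off simple connectivity from the topology of that covering. Throughout I will use the semiconjugacy $\exp\circ\tilde f=f\circ\exp$, which by induction gives $\exp\circ\tilde f^{\,n}=f^n\circ\exp$ for all $n\in\N_0$. I will also use the standard fact that a lift $\tilde f$ of a transcendental self-map $f$ of $\C^*$ satisfies $F(\tilde f)=\exp^{-1}(F(f))$, and hence $J(\tilde f)=\exp^{-1}(J(f))$ (see \cite{martipete1}).

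Fix a component $V$ of the open set $\exp^{-1}(U)$. Since $U\subseteq F(f)$ we get $V\subseteq\exp^{-1}(F(f))=F(\tilde f)$. To see that $V$ is in fact a Fatou component of $\tilde f$, I would show $\partial V\subseteq\exp^{-1}(\partial U)$: if $z\in\partial V$ then $\exp z\in\overline U$, and $z$ is not an interior point of $\exp^{-1}(U)$ (otherwise the component of $\exp^{-1}(U)$ through $z$ would meet $V$, since $z\in\overline V$, hence equal $V$, forcing $z\in V$), so $\exp z\notin U$ and thus $\exp z\in\partial U\subseteq J(f)$, giving $z\in J(\tilde f)$. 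Hence $V$ is open, connected and bounded by points of $J(\tilde f)$, so it is a Fatou component $\tilde U$ of $\tilde f$.

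Next I would check that $\tilde U=V$ is wandering. Let $U_n$ and $\tilde U_n$ be the Fatou components of $f$ and $\tilde f$ containing $f^n(U)$ and $\tilde f^{\,n}(\tilde U)$. Since $\exp(V)=U$, the semiconjugacy gives $\emptyset\neq f^n(U)=\exp(\tilde f^{\,n}(V))\subseteq\exp(\tilde U_n)\cap U_n$, while $\exp(\tilde U_n)$ is connected and contained in $F(f)$ (using $\exp(F(\tilde f))\subseteq F(f)$), so $\exp(\tilde U_n)\subseteq U_n$. Therefore, for $m\neq n$ a common point of $\tilde U_m$ and $\tilde U_n$ would be mapped by $\exp$ into $U_m\cap U_n=\emptyset$; so the $\tilde U_n$ are pairwise disjoint and $\tilde U$ is a wandering domain of $\tilde f$.

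For simple connectivity I would use that $\exp|_V\colon V\to U$ is a holomorphic covering map. If $U$ is simply connected, so is $V$. If $U$ is multiply connected, then by results of Baker \cite{baker87} (see also \cite{martipete}) $U$ is doubly connected and separates $0$ from $\infty$, so a generator of $\pi_1(U)$ has nonzero winding number about the origin and the inclusion-induced map $\pi_1(U)\to\pi_1(\C^*)\cong\Z$ is injective; since the composition $\pi_1(V)\to\pi_1(U)\to\pi_1(\C^*)$ factors through $\pi_1(\C)=0$ and $\pi_1(V)\to\pi_1(U)$ is injective (as $\exp|_V$ is a covering), we conclude $\pi_1(V)=0$. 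The main obstacles are external inputs rather than new arguments: the identity $J(\tilde f)=\exp^{-1}(J(f))$, which is what allows us to conclude that $V$ is an entire Fatou component and not merely contained in one (the subtle point being that a subsequence of $\tilde f^{\,n}$ converging locally uniformly to $\infty$ need not push forward under the periodic map $\exp$ to a convergent subsequence), and Baker's structure theorem, which rules out the case that $U$ is doubly connected with a complementary ``hole'' avoiding the origin -- in that case a component of $\exp^{-1}(U)$ would itself be doubly connected. Granting these two facts, the remainder is routine bookkeeping with the covering $\exp$.
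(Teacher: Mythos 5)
Your proposal is correct and follows essentially the same route as the paper: the same two external inputs (Bergweiler's result that $F(\tilde f)=\exp^{-1}(F(f))$, which you restate via $J(\tilde f)=\exp^{-1}(J(f))$, and Baker's theorem that a multiply connected Fatou component of a self-map of $\C^*$ is doubly connected and surrounds the origin), the same projection argument for the wandering property, and the same lifting idea for simple connectivity, which you merely phrase in covering-space language where the paper speaks of choosing a branch of the logarithm.
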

\begin{proof}
By a result of Bergweiler \cite{bergweiler95}, every component of $\exp^{-1}(U)$ is a Fatou component of $\tilde{f}$. Let $U_0$ be a component of $\exp^{-1}(U)$ and suppose to the contrary that there exist $m,n\in\N_0$, $m\neq n$, and a point $z_0\in\tilde{f}^m(U_0)\cap \tilde{f}^n(U_0)$. Then, there exists points $z_1,z_2\in U_0$ such that \vspace*{-5pt}
$$
f^m(e^{z_1})=\exp \tilde{f}^m(z_1)=\exp z_0=\exp \tilde{f}^n(z_2)=f^n(e^{z_2}).\vspace*{-5pt}
$$
Since $e^{z_1},e^{z_2}\in U$, this contradicts the assumption that $U$ is a wandering domain of $f$. Hence $U_0$ is a wandering domain of $\tilde{f}$. 

Finally, by \cite[Theorem~1]{baker87}, the Fatou component $U$ is either simply connected or doubly connected and surrounds the origin. Since the exponential function is periodic, taking a suitable branch of the logarithm one can show that the components of $\exp^{-1}(U)$ are simply connected.
\end{proof}

\begin{rmk}
Observe that the converse of Lemma~\ref{lem:semiconj-wd} does not hold. If $f$ is a transcendental self-map of $\C^*$ with an attracting fixed point $z_0$ and $A$ is the immediate basin of attraction of $z_0$, then there is a lift $\tilde{f}$ of $f$ such that a component of $\exp^{-1}(A)$ is a wandering domain.
\end{rmk}

If a transcendental self-map of $\C^*$ has an escaping wandering domain, then we can use the previous lemma to obtain automatically an example of a transcendental entire function with an escaping wandering domain.

\begin{ex}
The transcendental entire function $\tilde{f}(z)=z+\frac{\sin e^z}{e^z}+\frac{2\pi}{e^z}$, which is a lift of the function $f$ from Example~\ref{ex:wand-domain}, has infinitely many grand orbits of bounded wandering domains that escape to infinity.
\end{ex}



\section{Explicit functions with Baker domains}

\label{sec:explicit-bd}


We now turn our attention to Baker domains. As we mentioned in the introduction, Baker domains can be classified into hyperbolic, simply parabolic and doubly parabolic according to the Riemann surface $U/f$ obtained by identifying the points of the Baker domain $U$ that belong to the same orbit under iteration by the function $f$. K\"onig \cite{koenig99} introduced the following notation.

\begin{dfn}[Conformal conjugacy]
Let $U\subseteq \C$ be a domain and let \mbox{$f:U\to U$} be analytic. Then a domain $V\subseteq U$ is \textit{absorbing} (or \textit{fundamental}) for $f$ if $V$ is simply connected, $f(V)\subseteq V$ and for each compact set $K\subseteq U$, there exists $N=N_K$ such that $f^N(K)\subseteq V$.
Let $\mathbb H:=\{z\in\C\ :\ \textup{Re}\, z>0\}$. The triple $(V,\phi,T)$ is called a \textit{conformal conjugacy} (or \textit{eventual conjugacy}) of $f$ in $U$ if
\begin{enumerate}
\item[(a)] $V$ is absorbing for $f$;
\item[(b)] $\phi:U\to \Omega\in\{\mathbb H, \C\}$ is analytic and univalent in $V$;
\item[(c)] $T:\Omega\to\Omega$ is a bijection and $\phi(V)$ is absorbing for $T$;
\item[(d)] $\phi(f(z))=T(\phi(z))$ for $z\in U$.
\end{enumerate}
In this situation we write $f\sim T$.
\end{dfn}

Observe that properties (b) and (d) imply that $f$ is univalent in $V$. K\"onig also provided the following geometrical characterization of the three types of Baker domains \cite[Theorem~3]{koenig99}. This characterisation is also valid for any simply connected Baker domain of a transcendental self-map of $\C^*$.

\begin{lem}
Let $U$ be a $p$-periodic Baker domain of a meromorphic function~$f$ in which $f^{np}\to\infty$ and on which $f^p$ has a conformal conjugacy. For $z_0\in U$, put
$$
c_n=c_n(z_0):=\frac{|f^{(n+1)p}(z_0)-f^{np}(z_0)|}{\textup{dist}(f^{np}(z_0),\partial U)}.
$$
Then exactly one of the following cases holds:
\begin{enumerate}
\item[(a)] $U$ is hyperbolic and $f^p\sim T(z)=\lambda z$ with $\lambda>1$, which is \mbox{equivalent~to}
$$
c_n>c\quad \mbox{ for } z_0\in U,\ n\in \N,\quad \mbox{ where } c=c(f)>0.
$$
\item[(b)] $U$ is simply parabolic and $f^p\sim T(z)=z\pm i$, which is equivalent to 
$$
\liminf_{n\to\infty} c_n>0 \quad \mbox{ for } z_0\in U,\quad \mbox{ but } \inf_{z_0\in U}\limsup_{n\to\infty} c_n=0;
$$
\item[(c)] $U$ is doubly parabolic and $f^p\sim T(z)=z+1$, which is equivalent to
$$
\lim_{n\to\infty} c_n=0\quad \mbox{ for } z_0\in U.
$$
\end{enumerate}
\label{lem:bd-koenig}
\end{lem}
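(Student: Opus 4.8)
The plan is to reduce immediately to the case $p=1$ by replacing $f$ with $f^p$ (so $U$ is now invariant), and then to transfer the whole problem to the model domain $\Omega\in\{\mathbb H,\C\}$ via the conformal conjugacy $(V,\phi,T)$ that is assumed to exist. The three cases for $T$ — namely $T(z)=\lambda z$ with $\lambda>1$, $T(z)=z\pm i$, and $T(z)=z+1$ — are exhaustive and mutually exclusive among bijections of $\mathbb H$ or $\C$ that are conjugate to an absorbing self-map with orbits tending to the boundary: on $\C$ the only possibility is a translation $T(z)=z+1$ (after scaling), while on $\mathbb H$ a parabolic Möbius model gives $z+i$ and a hyperbolic one gives $\lambda z$. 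So the trichotomy on the model side is essentially a normal-form statement for $T$, and the real content is that the intrinsic quantity $c_n(z_0)$ detects which model occurs.

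First I would establish the key comparison: since $\phi$ is univalent on the absorbing domain $V$, the Koebe distortion theorem (equivalently, the standard comparison between the Euclidean metric and the hyperbolic/quasi-hyperbolic metric under a conformal map) gives, for $z_0\in V$,
$$
c_n(z_0)=\frac{|f^{n+1}(z_0)-f^{n}(z_0)|}{\operatorname{dist}(f^{n}(z_0),\partial U)}\asymp \frac{|T^{n+1}(w_0)-T^{n}(w_0)|}{\operatorname{dist}(T^{n}(w_0),\partial\Omega)},
$$
where $w_0=\phi(z_0)$ and the implied constants are absolute (from Koebe's $1/4$-theorem and distortion), uniformly as $n\to\infty$ because $f^n(z_0)\to\infty$ keeps the relevant points deep inside $V$. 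Because $V$ is absorbing, for \emph{any} $z_0\in U$ some iterate lands in $V$, so it suffices to compute the right-hand side for each of the three models. For $T(z)=\lambda z$ on $\mathbb H$: $T^n(w_0)=\lambda^n w_0$, $|T^{n+1}(w_0)-T^n(w_0)|=(\lambda-1)\lambda^n|w_0|$, and $\operatorname{dist}(\lambda^n w_0,\partial\mathbb H)=\lambda^n\operatorname{Re}w_0$, so the ratio equals $(\lambda-1)|w_0|/\operatorname{Re}w_0\geqslant \lambda-1>0$, a positive constant independent of $n$ — giving case (a). For $T(z)=z+i$ on $\mathbb H$: the numerator is $1$, while $\operatorname{dist}(w_0+ni,\partial\mathbb H)=\operatorname{Re}w_0$ is constant in $n$, so $c_n\asymp 1/\operatorname{Re}w_0$ stays bounded below along the orbit of $w_0$, but letting $z_0$ (hence $\operatorname{Re}w_0$) run over all of $V$ makes $\inf_{z_0}\limsup_n c_n=0$ — case (b). For $T(z)=z+1$ on $\C$ (or on $\mathbb H$ after noting the escape forces the real model to be $\C/\mathbb{Z}$): the numerator is $1$ but $\operatorname{dist}(w_0+n,\partial\mathbb H)\to\infty$ (or $\partial\Omega=\emptyset$, in which case one uses $\operatorname{dist}(f^n(z_0),\partial U)\to\infty$ directly), so $c_n\to 0$ — case (c).

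The main obstacle is making the metric comparison \eqref{above} rigorous and uniform: one must argue that $f^n(z_0)$ eventually lies in a region of $V$ where $\phi$ is \emph{uniformly} non-degenerate, so that Koebe distortion with absolute constants applies at the scale $\operatorname{dist}(f^n(z_0),\partial U)$. This is where the absorbing property of $V$ and $\phi(V)$, together with $f^n(z_0)\to\infty$, are used: the hyperbolic metric of $U$ and of $V$ agree up to a bounded factor on an absorbing subdomain, and $\phi$ is an isometry for the hyperbolic metrics of $V$ and $\phi(V)$; comparing hyperbolic and Euclidean metrics on both sides via the quasi-hyperbolic metric (legitimate since distances to the boundary control the hyperbolic density up to factors of $2$ on simply connected domains, by Koebe) yields the stated $\asymp$. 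Once this uniform comparison is in hand, the three model computations above are immediate, and the equivalences ``case (a) $\Leftrightarrow$ $c_n>c$'', ``case (b) $\Leftrightarrow$ the mixed $\liminf/\limsup$ condition'', ``case (c) $\Leftrightarrow$ $c_n\to 0$'' follow; the implications in the reverse direction hold because the three model-side behaviours of $c_n$ are pairwise incompatible, so the value of $\lim$, $\liminf$, $\limsup$ of $c_n$ pins down which $T$ occurs. I would also invoke König's classification of $T$ (that these three normal forms exhaust the admissible models) rather than reprove it, since the statement explicitly cites \cite[Theorem~3]{koenig99}; the additional remark to check is merely that the argument uses nothing about $f$ being entire, only that $U$ is a simply connected domain in $\C^*$ on which the conformal conjugacy hypothesis holds, which is exactly what is needed for transcendental self-maps of $\C^*$.
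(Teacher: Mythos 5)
First, a point of comparison: the paper does not prove this lemma at all. It is quoted verbatim as K\"onig's theorem \cite[Theorem~3]{koenig99}, with only the added remark that the characterisation remains valid for simply connected Baker domains of transcendental self-maps of $\C^*$. So there is no in-paper proof to match your argument against, and your sketch has to stand on its own as a proof of K\"onig's result.

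As such, it has a genuine gap at exactly the step you yourself flag as ``the main obstacle''. The claimed comparison $c_n(z_0)\asymp |T^{n+1}(w_0)-T^{n}(w_0)|/\operatorname{dist}(T^{n}(w_0),\partial\Omega)$ with absolute constants does not follow from Koebe distortion: $\phi$ is only a semiconjugacy, univalent on $V$ but not a conformal map of $U$ onto $\Omega$, so $\operatorname{dist}(z,\partial U)$ and $\operatorname{dist}(\phi(z),\partial\Omega)$ are not related by $|\phi'(z)|$ up to bounded factors. Koebe controls $\operatorname{dist}(\phi(z),\partial\phi(V))$ in terms of $\operatorname{dist}(z,\partial V)$, and neither of these is comparable to the quantities you need without further argument (the orbit may approach $\partial V$ much faster than $\partial U$, and $\partial\phi(V)$ is not $\partial\Omega$); the assertion that ``the hyperbolic metric of $U$ and of $V$ agree up to a bounded factor on an absorbing subdomain'' is precisely the nontrivial claim, not a standard fact. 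The case $\Omega=\C$ is worse still: there is no boundary and no hyperbolic metric, and your fallback that $\operatorname{dist}(f^{n}(z_0),\partial U)\to\infty$ in the doubly parabolic case is part of what must be proved, not an input. The standard (Cowen/K\"onig) route replaces your Euclidean transfer by an intrinsic one: for simply connected $U$, the quantity $c_n$ is comparable, when bounded, to the hyperbolic distance $\rho_U\bigl(f^{np}(z_0),f^{(n+1)p}(z_0)\bigr)$, which is non-increasing in $n$ by Schwarz--Pick; the trichotomy is then read off from whether the limit of this sequence is bounded below uniformly in $z_0$, is positive for each $z_0$ but has infimum zero, or vanishes, and linking that behaviour to the normal form of $T$ uses Cowen's classification of the conjugacy rather than a pointwise distance comparison. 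Your model computations in $\Omega$ are correct as far as they go, but without the intrinsic hyperbolic-metric step the stated equivalences are not established.
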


\begin{figure}[h!]
\centering
\def\svgwidth{\linewidth}
\input{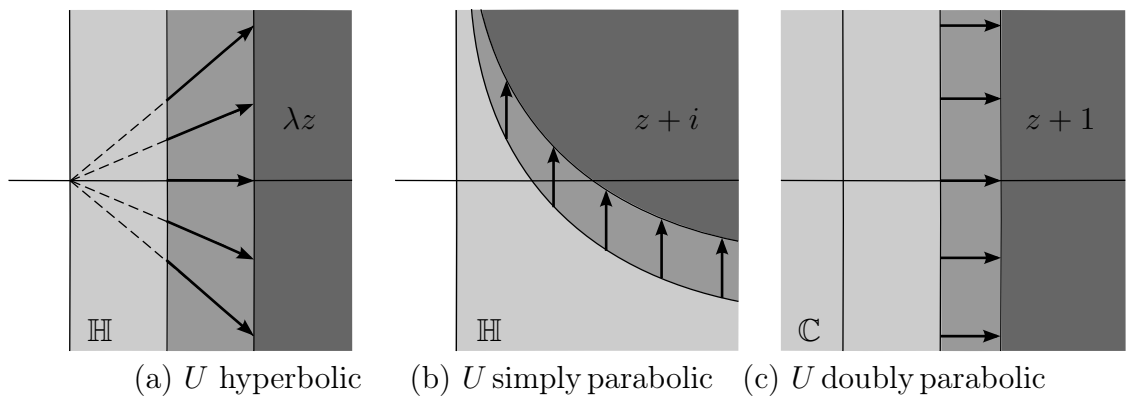}
\hspace*{17pt}(a) $U$ hyperbolic\hspace*{14pt} (b) $U$\! simply\! parabolic \hspace*{3pt} (c) $U$\! doubly\! parabolic
\caption[Classification of Baker domains with their absorbing domains]{Classification of Baker domains with their absorbing domains.}
\label{fig:bd-types}
\end{figure}

We now give a couple of explicit examples of transcendental self-maps of $\C^*$, with a hyperbolic and a doubly parabolic Baker domain, respectively.

\begin{ex}
For every $\lambda>1$, the function $f_\lambda(z)=\lambda z\exp(e^{-z}+1/z)$ is a transcendental self-map of $\C^*$ which has an invariant, simply connected, hyperbolic Baker domain $U\subseteq \C^*\setminus \R_-$ whose boundary contains both zero and infinity, and the points in~$U$ escape to infinity (see Figure~\ref{fig:hyp-baker-domain}). 
\label{ex:hyp-baker-domain}
\end{ex}

\begin{proof}[Proof of Example~\ref{ex:hyp-baker-domain}]
First observe that 
\begin{equation}
\begin{array}{rl}
f_\lambda(z)\hspace*{-8pt} & =\lambda z\exp\left(e^{-z}+\tfrac{1}{z}\right)\vspace{5pt}\\
& = \lambda z\left(1+e^{-z}+\tfrac{1}{2!}e^{-2z}+\cdots\right)\left(1+\tfrac{1}{z}+\tfrac{1}{2!}\tfrac{1}{z^2}+\cdots   \right)\vspace{5pt}\\
& = \lambda z\left(1+O\left(\tfrac{1}{z}\right)\right) \mbox{ as } \textup{Re}\,z\to\infty.
\end{array}
\label{eq:ex-bd-1}
\end{equation}
Hence $f_\lambda$ maps $\mathbb H_R:=\{z\in\C\ :\ \textup{Re}\,z>R\}$ into itself, for $R>0$ sufficiently large, so $\mathbb H_R\subseteq U$, where $U$ is an invariant Fatou component of $f_\lambda$. Also, for real $x>0$,
$$
f_\lambda(x)=\lambda x\exp\left(e^{-x}+\tfrac{1}{x}\right)>\lambda x>x
$$
so $f_\lambda^n(x)\to\infty$ as $n\to \infty$. Thus, $U$ is an invariant Baker domain of $f$ which contains the positive real axis, so $\partial U$ contains zero and infinity.

To show that $U$ is a hyperbolic Baker domain, consider $z_0\in U$. By the contraction property of the hyperbolic metric in $U$, the orbit of $z_0$ escapes to infinity in~$\mathbb H_R$. Hence, by \eqref{eq:ex-bd-1} and since $0\in U^c$,
$$
\begin{array}{rl}
c_n\hspace*{-6pt} &\ds=\frac{|f^{n+1}(z_0)-f^n(z_0)|}{\textup{dist}\,(f^n(z_0),\partial U)}\geqslant \frac{\lambda f^n(z_0)\left(1+O\left(\frac{1}{f^n(z_0)}\right)\right)-f^n(z_0)}{|f^n(z_0)|}\vspace{10pt}\\
&\ds>\lambda -1 -\frac{O(1)}{|f^n(z_0)|} \mbox{ as } n\to \infty,
\end{array}
$$
so
$$
\liminf_{n\to\infty} c_n\geqslant \lambda-1>0,
$$
and thus the Baker domain $U$ is hyperbolic.

Finally, observe that the negative real axis is invariant under $f$, and therefore $(-\infty,0)\cap U=\emptyset$. Since doubly connected Fatou components must surround zero, $U$ is simply connected.
\end{proof}

\begin{figure}[h!]
\includegraphics[width=.49\linewidth]{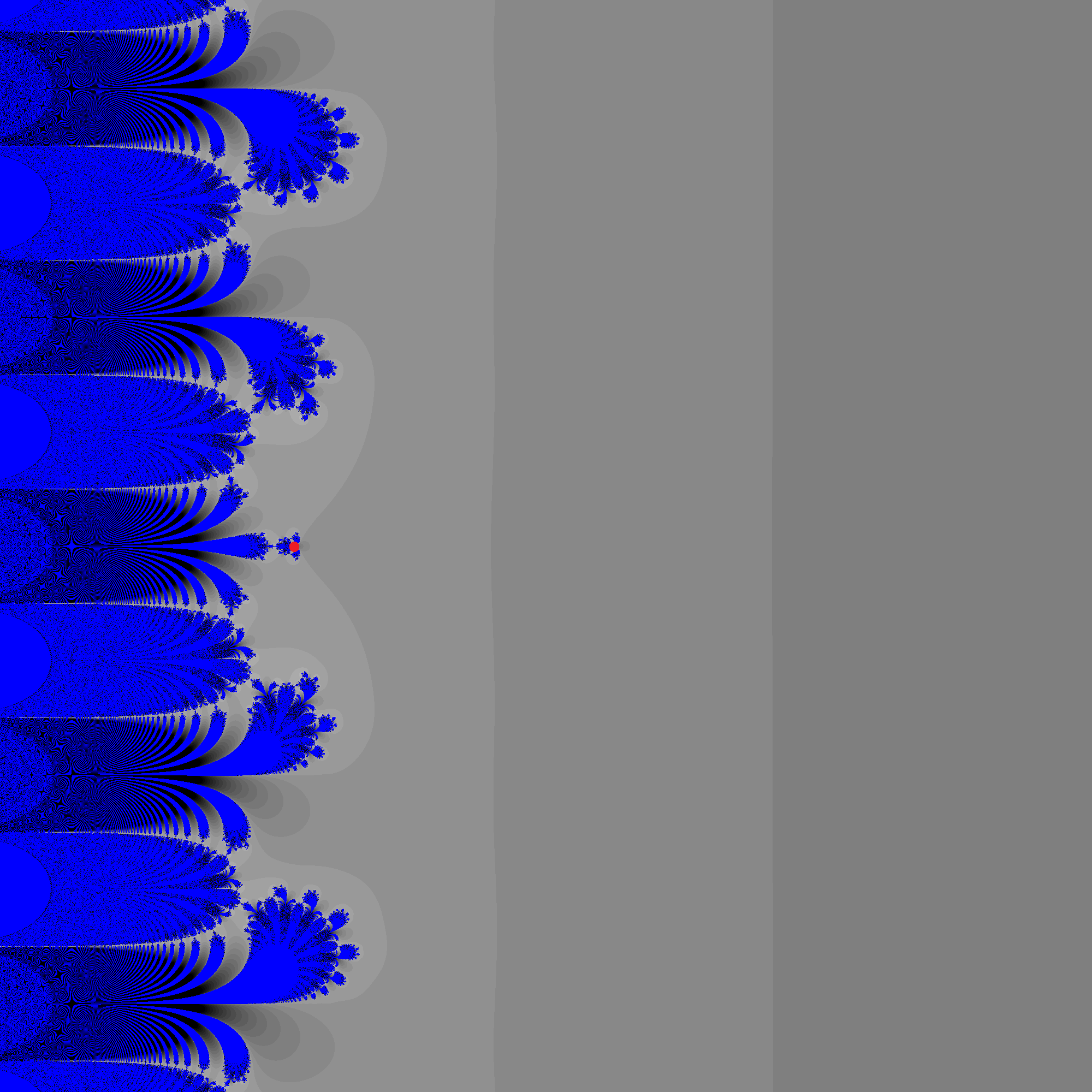}
\includegraphics[width=.49\linewidth]{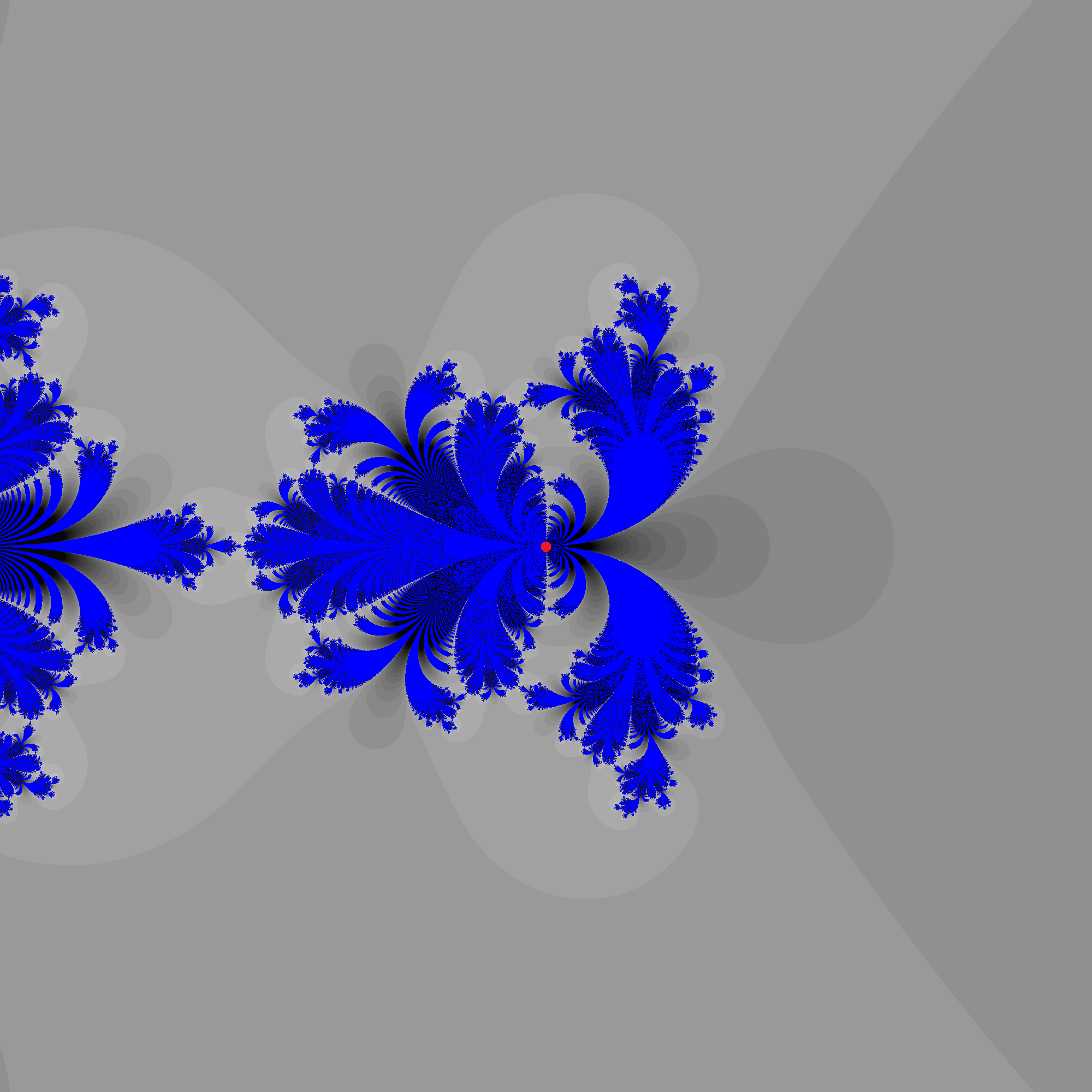}
\caption[Phase space of a transcendental self-map of $\C^*$ which has a hyperbolic Baker domain]{Phase space of the function $f_2(z)=2z\exp(e^{-z}+1/z)$ from Example~\ref{ex:hyp-baker-domain}. On the right, zoom of a neighbourhood of zero.}
\label{fig:hyp-baker-domain}
\end{figure}

The function $f(z)=2 z\exp(e^{-z}+1/z)$ has a repelling fixed point in the negative real line. If we choose $h(z)=1/z^2$ instead of $1/z$, then $f(z)=2 z\exp(e^{-z}+1/z^2)$ has the positive real axis in a Baker domain while the negative real axis is in the fast escaping set.



We now give a second explicit example of transcendental self-map of $\C^*$ with a Baker domain which, in this case, is doubly parabolic.

\begin{ex}
The function $f(z)=z\exp\left((e^{-z}+1)/z\right)$ is a transcendental self-map of $\C^*$ which has an invariant, simply connected, doubly parabolic Baker domain $U\subseteq \C^*\setminus \R_-$ whose boundary contains both zero and infinity, and the points in $U$ escape to infinity (see Figure~\ref{fig:dpar-baker-domain}). \vspace*{-10pt}
\label{ex:dpar-baker-domain}
\end{ex}

\begin{proof}[Proof of Example~\ref{ex:dpar-baker-domain}]
Looking at the power series expansion of $f$, we have 
$$
\begin{array}{rl}
f(z)\hspace*{-8pt} & = z\exp\left(\tfrac{e^{-z}}{z}+\tfrac{1}{z}\right)\vspace{5pt}\\
& = z\left(1+\tfrac{e^{-z}}{z}+\tfrac{1}{2!}\tfrac{e^{-2z}}{z^2}+\cdots\right)\left(1+\tfrac{1}{z}+\tfrac{1}{2!}\tfrac{1}{z^2}+\cdots   \right)\vspace{5pt}\\
& =  z\left(1+\tfrac{1}{z}+O\left(\tfrac{1}{z^2}\right)\right) \mbox{ as } \textup{Re}\,z\to\infty.
\end{array}
$$
Therefore $f$ maps the right half-plane $\mathbb H_R:=\{z\in\C\ :\ \textup{Re}\, z>R\}$ into itself for sufficiently large values of $R>0$ and $\mathbb H_R$ is contained in an invariant Baker domain $U$ of $f$, in which $\textup{Re}\,f^n(z)\to+\infty$ as $n\to\infty$. Since $f(x)>x$ for all $x>0$, the positive real axis lies in $U$. Let $z_0\in U$, then
$$
f^{n+1}(z_0)-f^n(z_0)\! =\! f^n(z_0)\! \left(\! 1+O\! \left(\! \frac{1}{f^n(z_0)}\! \right)\! \right)-f^n(z_0)\! =\! O(1)\mbox{ as } n\to \infty
$$
and, if $R$ is as above,
$$
\textup{dist}(f^n(z_0),\partial U)\geqslant \textup{Re}\,f^n(z_0)-R \quad \mbox{ as } n\to \infty,
$$
so 
$$
c_n=\frac{|f^{n+1}(z_0)-f^{n}(z_0)|}{\textup{dist}(f^{n}(z_0),\partial U)}\leqslant \frac{O(1)}{\textup{Re}\,f^n(z_0)-R}\to 0 \quad \mbox{ as } n\to\infty.
$$
Thus, by Lemma~\ref{lem:bd-koenig}, the Baker domain $U$ is doubly parabolic.

Finally, observe that, for $x\in (-\infty,0)$, $f^n(x)\to \infty$ along the negative real axis as $n\to\infty$, so $(-\infty,0)\cap U=\emptyset$ and hence $U$ is simply connected.
\end{proof}

\begin{figure}[h!]
\includegraphics[width=.49\linewidth]{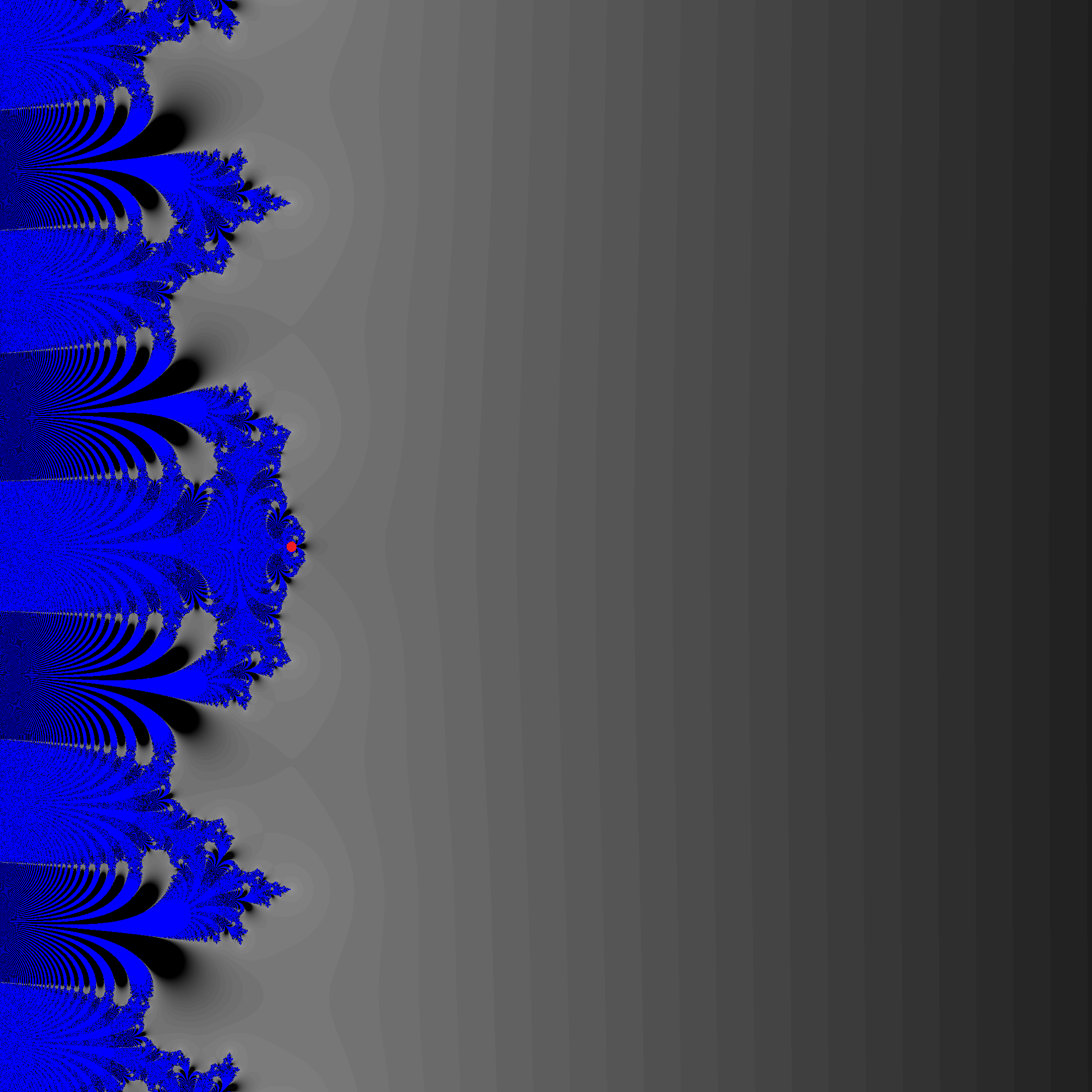}
\includegraphics[width=.49\linewidth]{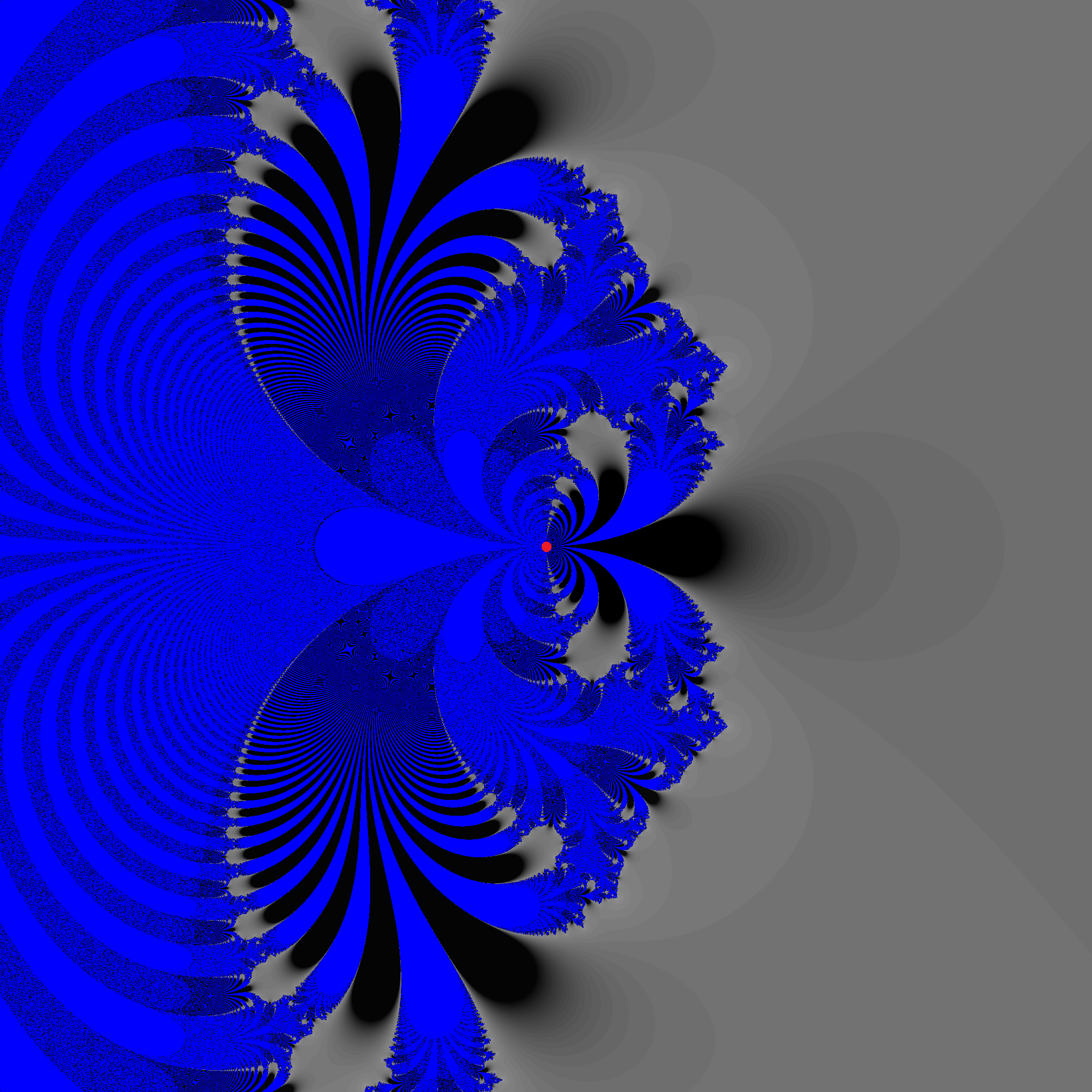}
\caption[\mbox{Phase space of a transcendental self-map of $\C^*$} \mbox{which has a doubly parabolic Baker domain}]{Phase space of the function $f(z)=z\exp\left((e^{-z}+1)/z\right)$ from Example~\ref{ex:dpar-baker-domain}. On the right, zoom of a neighbourhood of zero.}
\label{fig:dpar-baker-domain}
\end{figure}


\begin{lem}
Let $f$ be a transcendental self-map of $\C^*$ and let $\tilde{f}$ be a lift of $f$. Then, if $U$ is a Baker domain of $f$, every component $U_k,\ k\in\Z,$ of $\exp^{-1}(U)$ is either a (preimage of a) Baker domain or a wandering domain \mbox{of~$\tilde{f}$}. Moreover, if $U$ is simply connected and $U_k$ is a Baker domain, then $U_k$ is hyperbolic, simply parabolic or doubly parabolic if and only if $U$ is hyperbolic, simply parabolic or doubly parabolic, respectively.
\label{lem:semiconj-bd}
\end{lem}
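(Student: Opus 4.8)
The plan is to treat the two assertions separately: the first follows from general facts about escaping Fatou components, and the second from the observation that, when $U$ is simply connected, $\exp$ restricts to a biholomorphism between each component of $\exp^{-1}(U)$ and $U$, which lets me conjugate an iterate of $\tilde f$ on $U_k$ to an iterate of $f$ on $U$.

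For the first assertion I would argue as in the proof of Lemma~\ref{lem:semiconj-wd}: by the result of Bergweiler \cite{bergweiler95}, every component $U_k$ of $\exp^{-1}(U)$ is a Fatou component of $\tilde f$. Next I would check that $\exp^{-1}(U)\subseteq I(\tilde f)$. Since $U$ is a Baker domain we have $U\subseteq I(f)$, so for $w\in U_k$ the orbit $(f^n(\exp w))_n=(\exp\tilde f^n(w))_n$ accumulates only at $\{0,\infty\}$; hence for every $R>1$ one has $|\exp\tilde f^n(w)|\notin[1/R,R]$ for all large $n$, for otherwise the orbit would have an accumulation point in the compact annulus $\{1/R\leqslant|z|\leqslant R\}\subseteq\C^*$. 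Therefore $|\Re\tilde f^n(w)|\to\infty$, so $\tilde f^n(w)\to\infty$ in $\C$, i.e.\ $U_k\subseteq I(\tilde f)$. Thus $U_k$ is a Fatou component of $\tilde f$ contained in $I(\tilde f)$, and such a component is, by definition, either a wandering domain (when the components of its orbit are pairwise disjoint) or a (preimage of a) Baker domain (when two of them coincide); this is exactly the claimed dichotomy, and it does not use simple connectedness of $U$.

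For the second assertion, assume $U$ is simply connected and that some component $U_k$ of $\exp^{-1}(U)$ is a Baker domain, say of period $q$. Since $\exp\colon\exp^{-1}(U)\to U$ is a covering map, its restriction to the component $U_k$ is a covering of the connected, simply connected set $U$ with connected total space, hence a conformal bijection; let $\psi\colon U\to U_k$ be its inverse, so $\exp\circ\psi=\mathrm{id}_U$, $\exp$ is injective on $U_k$, and $U_k$ is simply connected. From $\tilde f^q(U_k)\subseteq U_k$ and $\exp\circ\tilde f=f\circ\exp$ we get $f^q(U)=\exp(\tilde f^q(U_k))\subseteq\exp(U_k)=U$, so the period $p$ of $U$ divides $q$; writing $q=mp$, the map $f^q|_U=(f^p|_U)^m$ sends $U$ into itself. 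I would then check that $\psi$ conjugates $f^q|_U$ to $\tilde f^q|_{U_k}$: for $z\in U$ the points $\tilde f^q(\psi(z))$ and $\psi(f^q(z))$ both lie in $U_k$ and both map under $\exp$ to $f^q(z)$, hence coincide by injectivity of $\exp$ on $U_k$. Both $U$ and $U_k$ are simply connected Baker domains, so Lemma~\ref{lem:bd-koenig} applies to $f^p$ on $U$ and to $\tilde f^q$ on $U_k$, the required conformal conjugacies being furnished by Cowen's theorem \cite{cowen81} (cf.\ the discussion preceding Lemma~\ref{lem:bd-koenig}). If $(V,\phi,T)$ is a conformal conjugacy of $f^p$ in $U$, then $(V,\phi,T^m)$ is one of $f^q=(f^p)^m$ in $U$, and pushing it forward by the biholomorphism $\psi$ yields a conformal conjugacy $(\psi(V),\phi\circ\psi^{-1},T^m)$ of $\tilde f^q$ in $U_k$; hence $\tilde f^q\sim T^m$. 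Since $T^m(z)=\lambda^m z$ when $T(z)=\lambda z$, $T^m(z)=z\pm mi$ when $T(z)=z\pm i$, and $T^m(z)=z+m$ when $T(z)=z+1$, the model $T^m$ belongs to the same one of the three types of Lemma~\ref{lem:bd-koenig} as $T$. By the uniqueness of the type in that lemma, $U_k$ is hyperbolic, simply parabolic, or doubly parabolic precisely according to which of these $T$ is, i.e.\ according to the type of $U$; this yields all three equivalences simultaneously.

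The routine verifications — that $\psi$ maps absorbing domains to absorbing domains and that the axioms of a conformal conjugacy survive conjugation by a biholomorphism, and that $f^q(z)$ is well inside $U$ so all the compositions make sense — are straightforward. The genuinely delicate points, and what I expect to be the main obstacle, are the period bookkeeping (showing that $q$ is a multiple of the period $p$ of $U$ and hence that the model map for $\tilde f^q$ on $U_k$ is the $(q/p)$-th power of the model map for $f^p$ on $U$, together with the elementary but essential remark that taking this power preserves the type label) and the use of simple connectedness of $U$ to conclude that $\exp$ is injective on the component $U_k$, which is what makes the conjugation identity $\psi\circ f^q=\tilde f^q\circ\psi$ hold on the nose rather than merely up to a translation by $2\pi i\Z$.
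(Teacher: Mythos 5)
Your proof is correct and follows essentially the same route as the paper: Bergweiler's theorem plus the inclusion $\exp^{-1}(I(f))\subseteq I(\tilde f)$ (which you verify directly) for the dichotomy, and transporting a conformal conjugacy of an iterate of $f$ on $U$ through the exponential, which is injective on $U_k$ by simple connectedness of $U$. The only cosmetic difference is that the paper takes a conformal conjugacy $(V,\phi,T)$ of $f^q$ in $U$ directly and sets $\tilde\phi=\phi\circ\exp$, whereas you power up a conjugacy for $f^p$ and add the (elementary, and in fact welcome) remark that $T^{q/p}$ has the same type label as $T$; the resulting conjugacies on $U_k$ coincide.
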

\begin{proof}
By \cite{bergweiler95}, every component of $\exp^{-1}(U)$ is a Fatou component of $\tilde{f}$. Moreover, since $\exp^{-1}(I(f))\subseteq I(\tilde{f})$, $U_k$ is either a Baker domain, a preimage of a Baker domain or an escaping wandering domain of $\tilde{f}$.

Suppose that $U$ has period $p\geqslant 1$ and $U_k$ is periodic. Then the Baker domain $U_k$ has period $q$ with $p\mid q$. Let $(V,\phi,T)$ be a conformal conjugacy of $f^q$ in $U$. Then $(\tilde{V},\tilde{\phi},T)$ is a conformal conjugacy of $\tilde{f}^q$ in $U_k$, where $\tilde{V}$ is the component of $\exp^{-1}V$ that lies in $U_k$ and $\tilde{\phi}=\phi\circ\exp$. Thus, the Baker domains $U$ and $U_k$ are of the same type.
\end{proof}

As before, we use Lemma~\ref{lem:semiconj-bd} to provide examples of transcendental entire functions with Baker domains and wandering domains.

\begin{ex}
The entire function $\tilde{f}(z)\!=\!\ln \lambda\!+\!z\!+\!\exp(-e^z)\!+\!e^{-z}$, which is a lift of the function $f$ from Example~\ref{ex:hyp-baker-domain}, has an invariant hyperbolic Baker domain that contains the real line.
\end{ex}

\begin{ex}
The entire function $\tilde{f}(z)=z+\frac{\exp(-e^z)}{e^z}+e^{-z}$, which is a lift of the function $f$ from Example~\ref{ex:dpar-baker-domain}, has an invariant doubly parabolic Baker domain that contains the real line.
\end{ex}

%
%
%
%
%
%

\section{Preliminaries on approximation theory} \label{sec:approx-theory}

In this section we state the results from approximation theory that will be used in Sections~\ref{sec:wd} and \ref{sec:bd} to construct examples of functions with wandering domains and Baker domains, respectively. We follow the terminology from \cite[Chapter~IV]{gaier87}, and introduce Weierstrass and Carleman sets. Recall that if $F\subseteq \C$ is a closed set, then $A(F)$ denotes the set of continuous functions $f:F\to\C$ that are holomorphic in the interior of $F$.



\begin{dfn}[Weierstrass set]
\label{dfn:weierstrass-set}
We say that a closed set $F\subseteq\C $ is a \textit{Weierstrass set} in $\C$ if each $f\in A(F)$ can be approximated by entire functions \textit{uniformly} on $F$; that is, for every $\varepsilon>0$, there is an entire function $g$ for which
$$
|f(z)-g(z)|<\varepsilon\quad \mbox{ for all } z\in F.
$$
\end{dfn}

The next result is due to Arakelyan and provides a characterisation of Weierstrass sets \cite{arakeljan64}. In the case that $F\subseteq \C$ is compact and $\C\setminus F$ is connected, then it follows from Mergelyan's theorem \cite[Theorem~1~on~p.~97]{gaier87} that functions in $A(F)$ can be uniformly approximated on $F$ by polynomials.

\begin{lem}[Arakelyan's theorem]
A closed set $F\subseteq \C$ is a Weierstrass set if and only if the following two conditions are satisfied:
\begin{enumerate}
\item[\emph{(K$_1$)}] $\CR\setminus F$ is connected;
\item[\emph{(K$_2$)}] $\CR\setminus F$ is locally connected at infinity.
\end{enumerate}
\end{lem}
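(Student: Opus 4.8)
This is Arakelyan's theorem, so the plan is to organise the standard proof. I would prove the two implications separately. Necessity of (K$_1$) and (K$_2$) is elementary and rests on the maximum principle applied to a well-chosen test function. Sufficiency is the substantial half: assuming (K$_1$) and (K$_2$), I would approximate a given $f\in A(F)$ by entire functions along an exhaustion of $\C$ by discs, using Mergelyan's theorem on the compact pieces and a Runge-type pole-displacement (fusion) argument to assemble the pieces into a single entire function.

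For necessity, suppose first that $\CR\setminus F$ is disconnected. Then $\C\setminus F$ has a bounded component $G$, namely the component of $\CR\setminus F$ not containing $\infty$; choosing $z_0\in G$, the function $\psi(z):=1/(z-z_0)$ lies in $A(F)$ since $z_0\notin F$. If an entire $g$ satisfied $|g-\psi|<\eps$ on $F\supseteq\partial G$, then $h(z):=g(z)(z-z_0)-1=(g(z)-\psi(z))(z-z_0)$ would be entire with $|h|<\eps\,\sup_{w\in\partial G}|w-z_0|$ on $\partial G$, hence on $\overline G$ by the maximum principle; but $|h(z_0)|=1$, a contradiction once $\eps$ is small. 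So (K$_1$) is necessary. If instead $\CR\setminus F$ fails to be locally connected at infinity, there is a fixed radius $\rho_0$ such that for arbitrarily large $R$ some point of $\CR\setminus F$ of modulus exceeding $R$ cannot be joined to $\infty$ inside $\{|z|>\rho_0\}\setminus F$; in other words $F$ pinches off a channel of the complement that runs out toward infinity. Imitating the classical examples in which $\C$ minus a family of long thin channels accumulating at $\infty$ fails to be a Weierstrass set, one constructs $f\in A(F)$ for which a Phragm\'en--Lindel\"of estimate through such a pinched channel forbids any entire function from staying within, say, $\tfrac12$ of $f$ on $F$. Hence (K$_2$) is necessary as well.

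For sufficiency, assume (K$_1$) and (K$_2$), and fix $f\in A(F)$ and $\eps>0$. I would exhaust $\C$ by discs $\overline{D(0,r_n)}$ with $r_n\uparrow\infty$, choosing the radii by means of (K$_2$): its validity allows one to enlarge each $F\cap\overline{D(0,r_n)}$ to a compact set $E_n$ for which $\CR\setminus E_n$ is still connected, with the $E_n$ chosen compatibly on the smaller discs. Mergelyan's theorem then furnishes polynomial approximations of $f$ on each $E_n$. The only genuine obstruction is that consecutive approximants agree well only on $F$, not off it, so their differences need not tend to $0$ uniformly on compact subsets of $\C$ and a naive limit need not be entire. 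The remedy is to build entire functions $g_n$ inductively so that $|g_n-f|$ is small on $E_n$ and, crucially, $|g_{n+1}-g_n|<\eps\,2^{-n}$ on \emph{all} of $\overline{D(0,r_n)}$. The correction $g_{n+1}-g_n$ is produced by Roth's fusion lemma, which splices a Mergelyan/Runge correction that is small on $F$ with the function $0$ on the part of $\overline{D(0,r_n)}$ lying away from $F$; the poles of the intermediate rational functions are meanwhile pushed off to infinity through the complement of $F$, which is connected and, by (K$_2$), suitably connected to $\infty$. Then $g:=\lim_n g_n$ converges locally uniformly on $\C$, hence is entire, and $|g-f|\leqslant\eps$ on $F$.

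The step I expect to be the real obstacle is the geometric bookkeeping inside the sufficiency proof: extracting from the topological hypothesis (K$_2$) a concrete choice of radii $r_n$ and caps $E_n$ for which $\CR\setminus E_n$ stays connected \emph{and} for which the fusion lemma applies with constants bounded independently of $n$ and of the approximants. In short, the heart of the matter is converting the qualitative conditions (K$_1$) and (K$_2$) into the quantitative ``room to push poles to infinity'' that the gluing consumes; the necessity direction, by contrast, is routine once the correct test function has been written down.
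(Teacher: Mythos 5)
First, a point of comparison: the paper does not prove this statement at all --- it is Arakelyan's theorem, quoted as a known result with a citation to Arakelyan's original paper --- so there is no in-paper argument to measure yours against. Judged on its own terms, your proposal identifies the standard proof architecture, and your necessity argument for (K$_1$) is complete and correct: a component $G$ of $\CR\setminus F$ not containing $\infty$ is indeed bounded in $\C$ (otherwise $G\cup\{\infty\}$ would be a connected subset of $\CR\setminus F$ strictly containing the component $G$), the test function $1/(z-z_0)$ lies in $A(F)$, and the maximum-principle contradiction goes through.

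The other two thirds of the proof, however, are statements of intent rather than arguments, and this is where the genuine gaps lie. For the necessity of (K$_2$) you never exhibit the function $f\in A(F)$ nor the Phragm\'en--Lindel\"of/harmonic-measure estimate that is supposed to obstruct approximation; ``imitating the classical examples'' is exactly the step that has to be carried out, and it is not routine: one must show that an entire function uniformly close to a suitably chosen $f$ on $F$ is forced, by a two-constants bound across the thin channels pinched off near $\infty$, into a growth restriction it cannot satisfy. For sufficiency, your plan (exhaustion by discs, enlargement to compact caps $E_n$ with connected complement, Mergelyan on each $E_n$, Roth's fusion lemma, pole-pushing through the connected complement, and the telescoping bound $|g_{n+1}-g_n|<\eps\,2^{-n}$ on $\overline{D(0,r_n)}$) is the correct classical strategy, but you explicitly leave unexecuted the conversion of the qualitative hypothesis (K$_2$) into the quantitative control that the fusion step consumes, and the fusion lemma itself is invoked without statement or proof. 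So as a proof the proposal is incomplete in both hard directions; as a roadmap of the standard argument it is accurate. Given that the paper itself uses the theorem as a black box, the honest options are either to cite the literature as the paper does, or to fill in these two steps in full.
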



If in addition both the set $F$ and the function $f\in A(f)$ are symmetric with respect to the real line, then the approximating function $g$ can be chosen to be symmetric as well (see \cite[Section 2]{gauthier13}).

Sometimes we may want to approximate a function in $A(f)$ so that the error is bounded by a given strictly positive function $\varepsilon:\C\to\R_+$ that is not constant, and $\varepsilon(z)$ may tend to zero as $z\to\infty$. 

\begin{dfn}[Carleman set]
\label{dfn:carleman-set}
We say that a closed set $F\subseteq \C$ is a \textit{Carleman set} in $\C$ if every function $f\in A(F)$ admits \textit{tangential approximation} on $F$ by entire functions; that is, for every strictly positive function $\varepsilon\in\mathcal C(F)$, there is an entire function $g$ for which
$$
|f(z)-g(z)|<\varepsilon(z)\quad \mbox{ for all } z\in F.
$$
\end{dfn}

It is clear that Carleman sets are a special case of Weierstrass sets and hence conditions ($\text{K}_1$) and ($\text{K}_2$) are necessary. Nersesyan's theorem gives sufficient conditions for tangential approximation \cite{nersesjan71}.

\begin{lem}[Nersesyan's theorem]
A closed set $F$ is a Carleman set in~$\C$ if and only if conditions \emph{($K_1$)}, \emph{($K_2$)} and 
\begin{enumerate}
\item[\emph{(A)}] for every compact set $K\subseteq \C$ there exists a neighbourhood $V$ of infinity in $\CR$ such that no component of $\mbox{int}\, F$ intersects both $K$ and $V$,
\end{enumerate}
are satisfied.
\label{lem:nersesjan}
\end{lem}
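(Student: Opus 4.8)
This is a classical theorem of Nersesyan \cite{nersesjan71}; we only outline the strategy one would follow. For the \emph{necessity} of the conditions, taking $\varepsilon$ to be a positive constant turns tangential approximation into uniform approximation, so every Carleman set is a Weierstrass set, and Arakelyan's theorem yields ($\text{K}_1$) and ($\text{K}_2$). If (A) failed for some compact $K$, there would be a component $\Omega$ of $\mbox{int}\,F$ meeting $K$ and containing points arbitrarily close to $\infty$; placing on $F$ a function $f\in A(F)$ that is bounded away from zero near some $z_*\in K\cap\Omega$ along $\Omega$, and choosing $\varepsilon$ to decay along $\Omega$ towards $\infty$ as fast as one wishes, a tangential approximant $g$ would be an entire function of size $\approx|f(z_*)|$ near $z_*$ which decays along $\Omega$ at the prescribed rate, contradicting the two-constants theorem applied inside $\Omega$. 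Hence (A) is necessary.

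For the \emph{sufficiency}, fix $f\in A(F)$ and a strictly positive $\varepsilon\in\mathcal C(F)$; the plan is a telescoping construction bootstrapping Arakelyan's theorem. One builds $g=\lim_{n}g_n$ with $g_1$ approximating $f$ on a bounded Weierstrass subset $F_1\subseteq F$, and $g_{n+1}=g_n+h_n$ obtained by a correction step. At stage $n$ the current defect $f-g_n$ is already smaller than a fixed small multiple of $\varepsilon$ on a bounded Weierstrass set $F_n\subseteq F$ (the part already treated); one enlarges $F_n$ to a bounded Weierstrass set $F_{n+1}\subseteq F$ and picks an entire $h_n$ with $|h_n-(f-g_n)|<2^{-n-1}\min_{F_{n+1}}\varepsilon$ on $F_{n+1}$ and $|h_n|<2^{-n}$ on a compact set $L_n\subseteq\C$. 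Condition (A) is what makes such an $h_n$ available: applying (A) with $K=F_n$ one arranges that the new part $F_{n+1}\setminus F_n$ lies so far out that, on the overlap $F_{n+1}\cap L_n$, the defect $f-g_n$ is negligible and no component of $\mbox{int}\,F$ links the old part of $F$ to the new one, so the function equal to $f-g_n$ on $F_{n+1}$ and to $0$ on $L_n$ is a legitimate element of $A(E)$ for a Weierstrass set $E\supseteq F_{n+1}\cup L_n$, to which Arakelyan's theorem applies (alternatively the two local approximations are merged by a fusion argument in the style of Roth); (A) is also used to ensure the $L_n$ can be chosen to exhaust $\C$ while keeping the sets $E$ Weierstrass. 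The bounds $|h_n|<2^{-n}$ on $L_n$ force $\sum_n h_n$ to converge locally uniformly on $\C$, so $g$ is entire, while telescoping the estimates on the $F_{n+1}$ gives $|f(z)-g(z)|<\varepsilon(z)$ for every $z\in F$, since each such $z$ lies in $F_m$ for all large $m$.

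The main obstacle is precisely this correction step: guaranteeing that the limit is a genuine \emph{entire} function rather than merely a function on $F$. With a careless choice of exhaustion, a correction $h_n$ that is small on the part of $F$ already treated need not be small anywhere off $F$, so $\sum h_n$ need not define a function on $\C$ at all. Condition (A) is exactly the hypothesis that permits decoupling each correction from $F$ while controlling it on a compact set exhausting the plane; the careful choice of the exhaustion $\{F_n\}$ and of the auxiliary sets $L_n$, together with the verification that the patched targets belong to $A(\cdot)$ of Weierstrass sets, is the technical core of Nersesyan's argument.
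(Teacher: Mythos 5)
This lemma is Nersesyan's theorem, which the paper quotes as a known result with a reference to \cite{nersesjan71} (following the exposition in \cite[Chapter~IV]{gaier87}); no proof is given in the paper, so there is nothing there to compare your argument against. Your outline of the sufficiency direction --- a telescoping sequence of entire corrections, each obtained from Arakelyan's theorem applied to a patched target on a Weierstrass set containing both the already-treated part of $F$ and a compact set from an exhaustion of $\C$, with condition (A) guaranteeing that the patching is legitimate and a Roth-type fusion lemma merging the local approximants --- is a fair description of the standard proof, with the caveat that every step you gesture at (the choice of exhaustion, the verification that the patched functions lie in $A(\cdot)$ of Weierstrass sets, and the fusion lemma itself) is where the actual work lies.

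The necessity of (A) as you sketch it has a genuine gap. First, the negation of (A) does not directly produce a single component $\Omega$ of $\mbox{int}\, F$ meeting $K$ and accumulating at infinity: for each $n$ there is a component meeting both $K$ and $\{|z|>n\}$, but these may be infinitely many distinct components, and that case requires a separate argument. Second, even in the single-component case, ``an entire function of size $\approx|f(z_*)|$ near $z_*$ which decays along $\Omega$'' is not by itself contradictory --- $\exp(-e^{z})$ has modulus $e^{-1}$ at the origin and decays double-exponentially along the positive real axis --- so the two-constants theorem ``applied inside $\Omega$'' does not finish the proof. What one actually uses is that $h:=g-f$ is holomorphic on $\Omega$, continuous on $\overline{\Omega}\subseteq F$, and bounded by $\varepsilon$ on all of $\overline{\Omega}$, including the part of $\partial\Omega$ reaching out to infinity; if $\varepsilon$ decays fast enough that the integral of $\log\varepsilon$ against harmonic measure diverges, then $h\equiv 0$, so $g\equiv f$ on $\Omega$, and the contradiction comes from having chosen $f\in A(F)$ that does not extend analytically across $\partial\Omega$ (for instance with singularities accumulating on $\partial\Omega$ from outside $F$). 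Your hypothesis that $f$ be `bounded away from zero near $z_*$' plays no role here; the non-extendability of $f$ is the essential ingredient you are missing.
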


Note that there is also a symmetric version of this result: if the set $F$ and the functions $f$ and $\varepsilon$ are in addition symmetric with respect to $\R$ then the entire function $g$ can be chosen to be symmetric with respect to $\R$ \cite[Section 2]{gauthier13}.

In some cases, depending on the geometry of the set $F$ and the decay of the error function $\varepsilon$, we can perform tangential approximation on Weierstrass sets without needing condition (A); the next result can be found in \cite[Corollary in p.162]{gaier87}.

\begin{lem}
Suppose $F\subseteq \C$ is a closed set satisfying conditions ($\text{K}_1$) and ($\text{K}_2$) that lies in a sector
$$
W_\alpha:=\{z\in \C\ :\ |\textup{arg}\,z|\leqslant \alpha/2\},
$$
for some $0<\alpha\leqslant 2\pi$. Suppose $\tilde{\varepsilon}(t)$ is a real function that is continuous and positive for $t\geqslant 0$ and satisfies
$$
\int_1^{+\infty} t^{-(\pi/\alpha)-1}\log\tilde{\varepsilon}(t)dt>-\infty.
$$
Then every function $f\in A(F)$ admits $\varepsilon$-approximation on the set $F$ with $\varepsilon(z)=\tilde{\varepsilon}(|z|)$ for $z\in F$. 
\label{lem:approx-sectors}
\end{lem}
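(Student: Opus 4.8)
This is the corollary on p.~162 of \cite{gaier87}; let me nonetheless outline how I would derive it from Arakelyan's theorem together with the Phragm\'en--Lindel\"of geometry of the sector. The plan is to run the standard Carleman exhaustion. First I would normalise so that $f$ is bounded on $F$, by subtracting an entire function that, by Arakelyan's theorem (which applies since $F$ satisfies $(\textup{K}_1)$ and $(\textup{K}_2)$), approximates $f$ on $F$ to within $1$. Then I would write $F=\bigcup_{n\geqslant 0}F_n$ with $F_n:=F\cap\overline{D(0,R_n)}$ for an increasing sequence $R_n\to\infty$ to be chosen, and construct the desired entire function as a telescoping limit $g=\lim_n g_n$, where $g_n$ is obtained from Arakelyan's theorem (applied to a harmless thickening of $F_n$) so as to approximate $f$ on $F_n$ within a prescribed $\eta_n>0$. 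The correction $h_n:=g_n-g_{n-1}$ is then within $\eta_{n-1}+\eta_n$ of $0$ on $F_{n-1}$, so choosing $\sum\eta_n$ small enough --- which is easy, and uses nothing about $\tilde\varepsilon$ beyond positivity --- the limit exists pointwise on $F$ and stays within $\tilde\varepsilon(|z|)$ of $f$ there.

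The real difficulty is that $g$ has to be \emph{entire}, that is, $\sum h_n$ must converge locally uniformly on all of $\C$, not merely on $F$. The function $h_n$ is only controlled on $F_n$: along the ``tentacles'' of $F$ --- components of $\interior F$ that leave $\overline{D(0,R_{n-1})}$ and reach beyond modulus $R_n$ --- one cannot force $h_n$ to be small, since there it must absorb the possibly enormous discrepancy $f-g_{n-1}$. This is exactly the obstruction that condition (A) of Lemma~\ref{lem:nersesjan} is designed to remove, and it is unavailable here: a Weierstrass set confined to a sector typically fails (A). What rescues the construction is that these tentacles lie inside $W_\alpha$ and that the error budget available along them at modulus $t$ is of order $\tilde\varepsilon(t)$, so $|h_n|$ on $F$ is bounded in terms of $\tilde\varepsilon$. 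By the two-constants theorem the contribution of $h_n$ to a fixed disc $\overline{D(0,\rho)}$ is then damped by a power of the harmonic measure in $W_\alpha$ of $\{|w|\geqslant R_n\}\cap W_\alpha$ as seen from that disc; conformally flattening the sector by the map $w\mapsto w^{\pi/\alpha}$ shows this harmonic measure is comparable to $(\rho/R_n)^{\pi/\alpha}$, so the contributions of successive $h_n$ telescope geometrically at a rate governed by $\tilde\varepsilon(R_n)$ and $(R_{n-1}/R_n)^{\pi/\alpha}$.

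Summing these bounds and demanding convergence on every disc turns the construction into a summability condition of Riemann-sum type; with the radii $R_n$ taken to grow geometrically and the exponent $\pi/\alpha$ carried along, this is guaranteed as soon as
$$
\int_1^{+\infty} t^{-\pi/\alpha-1}\log\tilde\varepsilon(t)\,dt>-\infty,
$$
which is the hypothesis --- indeed, the substitution $s=t^{\pi/\alpha}$ rewrites it as $\int_1^{+\infty}s^{-2}\log\tilde\varepsilon(s^{\alpha/\pi})\,ds>-\infty$, the familiar admissibility condition for tangential approximation with a radial error on a half-plane (the case $\alpha=\pi$). I expect the fiddly points to be two. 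First, one must choose the exhaustion radii adapted to $\tilde\varepsilon$, so that each annulus $\{R_{n-1}<|z|\leqslant R_n\}$ avoids the thin downward spikes that $\tilde\varepsilon$ may have (such spikes leave the integral finite but would wreck a naive book-keeping). Second, one must make rigorous the step where $h_n$ is chosen, via Arakelyan's theorem, to be small on the part of $\overline{D(0,R_{n-1})}$ not joined to the outer region through $\interior F$ while matching $f-g_{n-1}$ on $F_n\setminus F_{n-1}$; this requires checking that the relevant auxiliary sets, with thin corridors attached so their complements stay connected, still satisfy $(\textup{K}_1)$ and $(\textup{K}_2)$. Once both are handled, $g=\lim_n g_n$ is entire and $|g-f|<\tilde\varepsilon(|\cdot|)$ on $F$; if $F$, $f$ and $\tilde\varepsilon$ are symmetric with respect to $\R$, running the same construction with symmetric intermediate approximants gives a symmetric $g$.
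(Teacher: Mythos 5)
The paper offers no proof of Lemma~\ref{lem:approx-sectors}: it is quoted as the Corollary on p.~162 of \cite{gaier87}, so a citation is all that is required there. Your sketch correctly identifies that source and reconstructs the standard mechanism behind it --- Arakelyan's theorem for the individual approximation steps, a telescoping exhaustion, the two-constants theorem to control each correction off $F$ via the harmonic measure of $\{|w|\geqslant R_n\}\cap W_\alpha$ seen from a fixed disc, the conformal flattening $w\mapsto w^{\pi/\alpha}$ that produces the exponent $\pi/\alpha$, and the integral hypothesis as the resulting summability condition (your substitution $s=t^{\pi/\alpha}$ checks out, turning the condition into the half-plane-type condition with exponent $-2$). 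You also correctly locate why the result is not simply a consequence of Nersesyan's theorem (Lemma~\ref{lem:nersesjan}): the sets to which it is applied in Sections 5 and 6 have unbounded components of the interior and so fail condition (A). The two ``fiddly points'' you flag --- adapting the exhaustion radii to downward spikes of $\tilde\varepsilon$, and arranging that each correction is entire while being quantitatively controlled only on $F$ --- are indeed where the real work lies in Gaier's argument, and your sketch does not discharge them; but since the paper itself treats the lemma as a black box from the literature, nothing more is needed for the purposes of this paper.
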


\section{Construction of functions with wandering domains}

\label{sec:wd}


To prove Theorem \ref{thm:wandering-domains} we modify Baker's construction of a holomorphic self-map of $\C^*$ with a wandering domain escaping to infinity \cite[Theorem 4]{baker87} to create instead a transcendental self-map of $\C^*$ with a wandering domain that accumulates to zero and to infinity according to a prescribed essential itinerary $e\in\{0,\infty\}^{\N_0}$ and with index $n\in \Z$.

\begin{proof}[Proof of Theorem \ref{thm:wandering-domains}] 
We construct two entire functions $g$ and $h$ using Nersesyan's theorem so that the function $f(z)=z^n\exp\bigl(g(z)+h(1/z)\bigr)$, which is a transcendental self-map of $\C^*$, has the following properties:
\begin{itemize}
\item there is a bi-infinite sequence of annuli sectors $\{A_m\}_{m\in\Z\setminus\{0\}}$ that accumulate at zero and infinity and integers $s(m)\in\Z\setminus\{0\}$, for $m\in\Z\setminus\{0\}$, such that $f(A_m)\subseteq A_{s(m)}$ for all $m\in\Z$;
\item the discs $B_+:=\overline{D(2,1/4)}$ and $B_-:=1/B_+=\overline{D(32/63, 4/63)}$ both map strictly inside themselves under $f$, $f(B_+)\subseteq \textup{int}\,B_+$ and $f(B_-)\subseteq \textup{int}\,B_-$;
\item there is a bi-infinite sequence of closed discs $\{B_m\}_{m\in\Z\setminus\{0\}}$ such that $f(B_m)\subseteq \textup{int}\,B_+$, if $m>0$, and $f(B_m)\subseteq \textup{int}\,B_-$, if $m<0$.
\end{itemize}
Here $s(m):=\pi(\pi^{-1}(m)+1)$ and the map $\pi:\N\longrightarrow \Z\setminus \{0\}$ is an ordering of the sets $\{A_m\}_{m\in\N}$ according to the sequence $e$; that is, $\pi(k)$ is the position of the $k$th component in the orbit of the wandering domain. More formally, we define
\begin{equation}
\pi(k):=\left\{
\begin{array}{ll}
\ds\#\{\ell\in\N_0\ :\ e_\ell=\infty \mbox{ for } \ell<k\}+1, & \mbox{ if } e_{k}=\infty,\vspace{5pt}\\
\ds-\,\#\{\ell\in\N_0\ :\ e_\ell=0 \mbox{ for } \ell<k\}-1, & \mbox{ if } e_{k}=0,
\end{array}
\right.
\label{eq:p-function}
\end{equation}
for $k\in\N$ (see Figure \ref{fig:sketch-wd}).

\begin{figure}[h!]
\centering
\vspace*{45pt}
\def\svgwidth{.8\linewidth}
\input{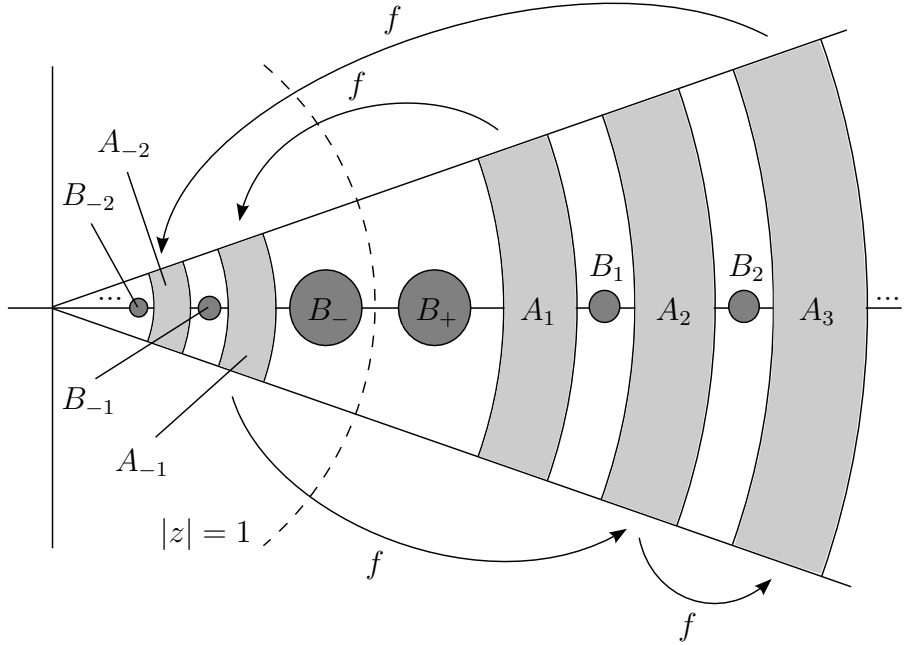}
\vspace*{30pt}
\caption[Sketch~of~the~construction~of~a~transcendental \mbox{self-map\,of\,$\C^*$\,with\,a\,wandering\,domain}]{Sketch of the construction in the proof of Theorem \ref{thm:wandering-domains}.}
\label{fig:sketch-wd}
\end{figure}

By Montel's theorem, the domains $\{A_m\}_{m\in \Z\setminus\{0\}}$, $\{B_m\}_{m\in\Z\setminus\{0\}}$ and $B_+,B_-$ are all contained in the Fatou set. Since $f(B_+)\subseteq \textup{int}\,B_+$, the function $f$~has an attracting fixed point in $B_+$ and the sets $\{B_m\}_{m\in\N}$ are contained in the preimages of the immediate basin of attraction of this fixed point. Likewise, the sets $\{B_{-m}\}_{m\in\N}$ belong to the basin of attraction of an attracting fixed point in $B_-$. Observe that in order to show that $A_1$ is contained in a wandering domain that escapes following the essential itinerary $e$ we need to prove that every $A_m$ is contained in a different Fatou component. 

Now let us construct the entire functions $g$ and $h$ so that the function $f(z)=z^n\exp\bigl(g(z)+h(1/z)\bigr)$ has the properties stated above. Note that in this construction $\log z$ denotes the principal branch of the logarithm with \mbox{$-\pi<\textup{arg}\,z<\pi$}. Let $0<R<\pi/2$ and set, for $m>0$, define
$$
\begin{array}{l}
A_m:=\{z\in\C\ :\ -R\leqslant\mbox{arg}(z)\leqslant R,\ k_m\leqslant |z|\leqslant k_m e^{2R}\},\vspace{5pt}\\
B_m:=\overline{D\bigl((k_{m+1}-k_m)/2,\ 1/8\bigr)},
\end{array}
$$
where $k_m$ is any sequence of positive real numbers such that \mbox{$k_m\! >5/2$} and $k_{m+1}>k_m+1/4$ for all $m\in \N$. We define $A_{-m}:=1/A_m$ and $B_{-m}:=1/B_m$ for all $m\in\N$. Note that $\log A_m$ is a square of side $2R$ centred at a point that we denote by $a_m\in \R$. Hence, $\log A_m$ contains the disc $D(a_m,R)$ for all $m\in\Z\setminus\{0\}$. The set 
$$
F:=\overline{D(0,1)}\cup B_+\cup \bigcup_{m>0} (A_m\cup B_m)
$$
which consists of a countable union of disjoint compact sets is a Carleman set.

Let $\delta_+,\delta_->0$ be such that $|w-\ln 2|\! <\! \delta_+$ and \mbox{$|w-\ln 32/63|\! <\! \delta_-$} imply, respectively, that $|e^w-2|<1/8$ and $|e^w-32/63|<2/63$. Let $K:=\min\{R/4, \delta_\pm/4\}$. By Lemma \ref{lem:nersesjan}, there is an entire function $g$ that satisfies the following conditions:
$$
\left\{
\begin{array}{ll}
|g(z)-a_{s(m)}-n\log z|<R/4, & \mbox{if } z\in A_m \mbox{ with } m>0,\vspace{10pt}\\
|g(z)-\ln 2-n\log z|<\delta_+/4, & \ds\mbox{if } z\in \bigcup_{m>0} B_m\cup B_+,\vspace{5pt}\\
|g(z)|<K, & \mbox{if } z\in D(0,1),
\end{array}
\right.\vspace{5pt}
$$
Similarly, there is an entire function $h$ that satisfies the following conditions:
 $$ \left\{
\begin{array}{ll}
|h(z)-a_{s(-m)}-n\log (1/z)|<R/4, & \mbox{if } z\in A_m \mbox{ with } m>0,\vspace{10pt}\\
|h(z)-\ln 32/63-n\log (1/z)|<\delta_-/4, & \ds\mbox{if } z\in \bigcup_{m>0} B_m\cup B_+,\vspace{5pt}\\
|h(z)|<K, & \mbox{if } z\in D(0,1).
\end{array}
\right.\vspace{5pt}
$$
Therefore, since the sets $B_-$ and $A_m$, $m<0$, are contained in $D(0,1)$ and the sets $B_+$ and $A_m$, $m>0$, are contained in $\C\setminus \overline{D(0,1)}$, the function $\log f(z)=g(z)+h(1/z)+n\log z$ satisfies
$$
\left\{
\begin{array}{ll}
|\log f(z)-a_{s(m)}|<R/2, & \mbox{if } z\in A_m \mbox{ with } m\neq 0,\vspace{10pt}\\
|\log f(z)-\ln 2|<\delta_+/2, & \ds\mbox{if } z\in  \bigcup_{m>0} B_m\cup B_+,\vspace{10pt}\\
|\log f(z)-\ln 32/63|<\delta_-/2, & \ds\mbox{if } z\in  \bigcup_{m<0} B_m\cup B_-,\\
\end{array}
\right.
$$
and hence $f$ has the required mapping properties.


Finally, note that this construction is symmetric with respect to the real line and hence all Fatou components of $f$ that intersect the real line will be symmetric too. Thus, since transcendental self-maps of~$\C^*$ cannot have doubly connected Fatou components that do not surround the origin \cite[Theorem 1]{baker87}, the Fatou components containing the sets $\{A_m\}_{m\in\Z\setminus \{0\}}$ are pairwise disjoint and $A_{\pi(0)}$ is contained in a wandering domain in $I_e(f)$.
\end{proof}


\section{Construction of functions with Baker domains}

\label{sec:bd}

In this section we construct holomorphic self-maps of $\C^*$ with Baker domains. The construction is split into two cases: first, we deal with the cases that the function $f$ is a transcendental entire or meromorphic function, that is, $f(z)=z^n\exp(g(z))$ where $n\in\Z$ and $g$ is a non-constant entire function (see Theorem~\ref{thm:baker-domains-entire}), and then we deal with the case that the function $f$ is a transcendental self-map of~$\C^*$, that is, $f(z)=z^n\exp(g(z)+h(1/z))$ where $n\in \Z$ and $g,h$ are non-constant entire functions (see Theorem~\ref{thm:baker-domains}). For transcendental self-maps of~$\C^*$, we are able to construct functions with Baker domains that have any given \textit{periodic} essential itinerary $e\in\{0,\infty\}^{\N_0}$. 

To that end, we use Lemma~\ref{lem:approx-sectors}~to obtain entire functions $g$ and, if necessary, $h$ so that the function $f$ has a Baker domain. After this approximation process, the resulting function $f$ will behave as the function~$T_\lambda(z)=\lambda z$, $\lambda>1$, in a certain half-plane~$W$. We first require the following result that estimates~the asymptotic distance between the boundaries of $\log W$ and $\log T_\lambda(W)\subseteq \log W$.

\begin{lem}
Let $W=\{z\in\C : \textup{Re}\, z\geqslant 2\}$ and, for $\lambda>1$, let \mbox{$T_\lambda(z)=\lambda z$}. For $r>0$, let $\delta (r)$ denote the vertical distance between the curves $\partial\log  W$ and $\partial\log T_\lambda(W)\subseteq \log W$ along the vertical line $V_r:=\{z\in\C : \textup{Re}\, z=r\}$. Then $\delta(r)\sim 2(\lambda-1)e^{-r}$ as $r\to+\infty$.
\label{lem:approx-BD}
\end{lem}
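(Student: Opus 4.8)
The plan is to write both boundary curves explicitly in terms of the principal logarithm, read off where each one crosses $V_r$, and then Taylor‑expand.

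First I would identify $\log W$ and $\log T_\lambda(W)$. Since $\log$ denotes the principal branch, for any $z$ with $\Re z>0$ we have $\Re\log z=\log|z|$, $\Im\log z=\arg z\in(-\pi/2,\pi/2)$, and $\Re z=|z|\cos(\arg z)$. Hence, writing $w=\log z$,
$$
\log W=\{w\in\C\ :\ e^{\Re w}\cos(\Im w)\geqslant 2\},\qquad \log T_\lambda(W)=\{w\in\C\ :\ e^{\Re w}\cos(\Im w)\geqslant 2\lambda\},
$$
and correspondingly $\partial\log W=\{w\ :\ e^{\Re w}\cos(\Im w)=2\}$, $\partial\log T_\lambda(W)=\{w\ :\ e^{\Re w}\cos(\Im w)=2\lambda\}$. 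In particular $\partial\log T_\lambda(W)=\log\{z\ :\ \Re z=2\lambda\}\subseteq\log W$, as asserted in the statement.

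Next I would intersect with the vertical line $V_r=\{w\ :\ \Re w=r\}$. For $r>\log 2$ the equation $e^{r}\cos y=2$ has in $(-\pi/2,\pi/2)$ exactly the two solutions $y=\pm\arccos(2e^{-r})$; likewise, for $r>\log(2\lambda)$ the curve $\partial\log T_\lambda(W)$ meets $V_r$ precisely at $y=\pm\arccos(2\lambda e^{-r})$. Thus, for $r>\log(2\lambda)$ each of the two curves is, near $V_r$, the graph of a single‑valued function of $\Re w$ (one branch in the upper half‑plane, its mirror image in the lower), so the vertical distance between them along $V_r$ is unambiguous and equals
$$
\delta(r)=\arccos(2e^{-r})-\arccos(2\lambda e^{-r}),
$$
which is positive, reflecting $\log T_\lambda(W)\subseteq\log W$.

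Finally I would expand as $r\to+\infty$. Putting $u=2e^{-r}\to 0^{+}$ and using $\tfrac{d}{ds}\arccos(s)=-(1-s^2)^{-1/2}$, we get
$$
\delta(r)=\arccos(u)-\arccos(\lambda u)=\int_u^{\lambda u}\frac{ds}{\sqrt{1-s^2}}=(\lambda-1)u+O(u^3)=2(\lambda-1)e^{-r}+O(e^{-3r}),
$$
so $\delta(r)\sim 2(\lambda-1)e^{-r}$ as $r\to+\infty$. The argument is essentially a routine computation; the only point that needs a little care is verifying that for large $r$ the two boundary curves really are graphs over a neighbourhood of $V_r$, so that ``the vertical distance along $V_r$'' is well defined — which is immediate from the explicit parametrisations above, together with keeping $\Im w$ in $(-\pi/2,\pi/2)$ via the principal branch of the logarithm.
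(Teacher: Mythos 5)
Your proof is correct and follows essentially the same route as the paper: both reduce $\delta(r)$ to the difference $\arccos(2e^{-r})-\arccos(2\lambda e^{-r})$ of the arguments where the two vertical lines meet the circle of radius $e^r$, and then expand as $r\to+\infty$. Your integral form of the expansion, with its explicit $O(e^{-3r})$ error, is a slightly more careful version of the paper's first-order approximation $\arccos(s)\approx\pi/2-s$, but there is no substantive difference.
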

\begin{proof}
Since $\log z=\ln |z|+i\,\mbox{arg}(z)$, the quantity $\delta(r)$ equals the difference between the arguments of the points $z_1,z_2$ with $\textup{Im}\, z_k>0$, $k\in\{1,2\}$, where the vertical lines $\partial W$ and $\partial T(W)$ intersect the circle $\exp V_r$ of radius $e^r$ (see Figure \ref{fig:sketch-bd}).  

\begin{figure}[h!]
\centering
\vspace{30pt}
\def\svgwidth{.8\linewidth}
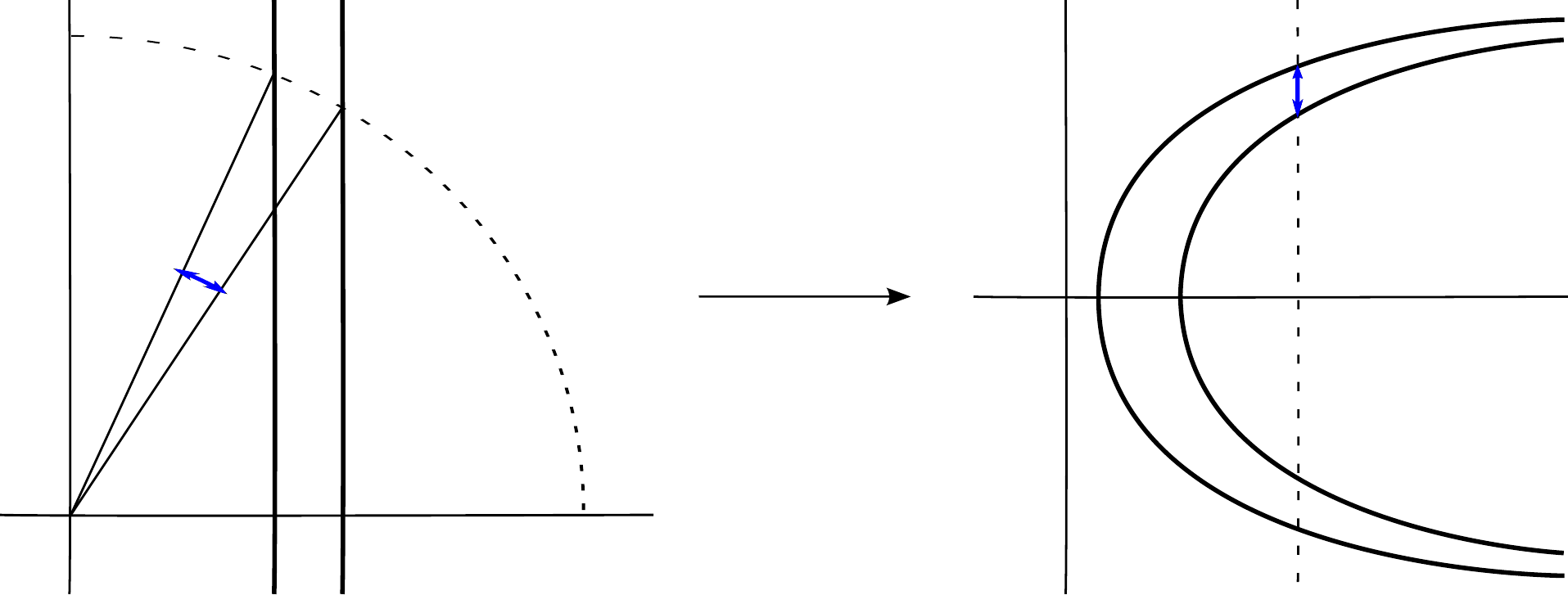
\vspace{15pt}
\caption[Definition of the function $\delta(r)$]{Definition of the function $\delta(r)$.}
\label{fig:sketch-bd} 
\end{figure}

Since $\mbox{arg}\,z_1,\mbox{arg}\,z_2 \to\pi/2$ as $r\to+\infty$, we have
$$
\delta(r)=\arccos \frac{2}{e^r}-\arccos\frac{2\lambda}{e^r}\sim\left(\frac{\pi}{2}-\frac{2}{e^r}\right)-\left(\frac{\pi}{2}-\frac{2\lambda}{e^r}\right)=\frac{2(\lambda-1)}{e^r},
$$
as $r\to+\infty$, as required.
\end{proof}

Given $N\in\N$ and a periodic sequence \mbox{$e=\overline{e_0e_1\cdots e_{N-1}}\in\{0,\infty\}^{\N_0}$}, let $p,q\in\N$ denote
\begin{equation}
\begin{array}{l}
p=p(e):=\#\{k\in\N_0\ :\ e_k=\infty \mbox{ for } k<N\},\vspace{5pt}\\
q=q(e):=\#\{k\in\N_0\ :\ e_k=0 \mbox{ for } k<N\},
\end{array}
\label{eq:p-and-q}
\end{equation}
so that $p+q=N$. We want to construct a holomorphic function \mbox{$f:\C^*\to\C^*$} with an $N$-cycle of Baker domains that has components $U_i^\infty$, $0\leqslant i<p$, and $U_i^0$, $0\leqslant i<q$, in which 
$$
f_{|U_i^\infty}^{Nn}\to\infty\quad \mbox{ and } \quad f_{|U_i^0}^{Nn}\to 0 \quad \mbox{ locally uniformly as } n\to \infty.
$$
In the case that zero is \textit{not} an essential singularity of $f$, then $q=0 $ and $N=p$. Note that the closure of a Baker domain in $\CR$ may contain both zero and infinity. 

For $p\in\N$ and $X\subseteq \C^*$, we define
$$
\sqrt[p]{X}:=\{z\in\C^*\, :\, z^p\in X,\ |\textup{arg}\,z|<\pi/p\}.
$$
In order to construct a function with an $N$-periodic Baker domain that has $p$ components around zero or infinity, we will semiconjugate the function $T_\lambda$ that we want to approximate in the half-plane $W$ by the $p$th root function:
$$
\xymatrix{
W \ar[r]^{T_\lambda}  & W\\
\sqrt[p]{W} \ar[u]^{z^p} \ar[r]_{T_{\lambda,p}} & \sqrt[p]{W}. \ar[u]_{z^p}
}
$$
Next we look at the effect of this semiconjugation on the function $\delta$.

\begin{lem}
Let $W$ and $T_\lambda$, $\lambda>1$, be as in Lemma~\ref{lem:approx-BD}. For $p\in\N$ and $\lambda>1$, define the function $T_{\lambda,p}(z):=\sqrt[p]{T_\lambda(z^p)}$ on $\sqrt[p]{W}$ and, for $r>0$, let $\delta_{p}(r)$ denote the vertical distance between the curves $\partial \log \sqrt[p]{W}$ and $\partial \log T_{\lambda,p}(\sqrt[p]{W})\subseteq \log \sqrt[p]{W}$ along the vertical line \mbox{$V_r:=\{z\in\C : \textup{Re}\, z=r\}$}. Then $\delta_{p}(r)\sim 2(\lambda-1)e^{-pr}/p$ as $r\to+\infty$.
\label{lem:approx-root-BD}
\end{lem}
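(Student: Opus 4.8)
The plan is to deduce this from Lemma~\ref{lem:approx-BD} by observing that, once one passes to logarithmic coordinates, taking $p$th roots is nothing but a linear rescaling by the factor $1/p$.

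The first step is to record how $\log$ interacts with the $p$th-power semiconjugacy. On $\sqrt[p]{W}$ one has $z^p\in W\subseteq\{\Re w>0\}$, so $|\arg z|<\pi/(2p)$; hence the principal logarithm satisfies $\log(z^p)=p\log z$ there, and in particular $T_{\lambda,p}(z)=\lambda^{1/p}z$. Combining this with the elementary fact that $T_{\lambda,p}(\sqrt[p]{W})=\sqrt[p]{T_\lambda(W)}$ (since $z\mapsto z^p$ sends $\sqrt[p]{W}$ onto $W$, the image $T_\lambda(W)\subseteq W$ has positive real part, and the branch of the root in the definition of $\sqrt[p]{\cdot}$ is then forced to be the principal one), one obtains
$$
\log\sqrt[p]{W}=\tfrac1p\log W,\qquad \log T_{\lambda,p}(\sqrt[p]{W})=\tfrac1p\log T_\lambda(W).
$$
Thus the two curves $\partial\log\sqrt[p]{W}$ and $\partial\log T_{\lambda,p}(\sqrt[p]{W})$ are precisely the images under the dilation $w\mapsto w/p$ of the curves $\partial\log W$ and $\partial\log T_\lambda(W)$ appearing in Lemma~\ref{lem:approx-BD}.

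The second step is immediate: the dilation $w\mapsto w/p$ carries the vertical line $V_{pr}$ to $V_r$ and multiplies vertical distances by $1/p$, so $\delta_p(r)=\tfrac1p\,\delta(pr)$ for all $r>0$. Finally, applying Lemma~\ref{lem:approx-BD} with the argument $pr$ in place of $r$ gives $\delta(pr)\sim 2(\lambda-1)e^{-pr}$ as $r\to+\infty$, whence $\delta_p(r)\sim 2(\lambda-1)e^{-pr}/p$, as required.

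The only point requiring care is the branch-cut bookkeeping in the first step: one must check that $\log(z^p)=p\log z$ on $\sqrt[p]{W}$ (so that $\log$ genuinely turns the $p$th-power semiconjugacy into multiplication by $p$) and that the root used in the diagram is compatible with the principal logarithm on all the sets involved; both hold because every set in sight lies inside a sector of half-opening less than $\pi/(2p)$ about $\R_+$. Alternatively one could bypass this and rerun the computation of Lemma~\ref{lem:approx-BD} directly: the boundary of $\sqrt[p]{W}$ meets the circle $\exp V_r$ at the point whose $p$th power lies on $\{\Re w=2\}\cap\{|w|=e^{pr}\}$, which has argument $\tfrac1p\arccos(2/e^{pr})$, and similarly $\tfrac1p\arccos(2\lambda/e^{pr})$ for $\partial\log T_{\lambda,p}(\sqrt[p]{W})$, so that $\delta_p(r)=\tfrac1p\big(\arccos(2/e^{pr})-\arccos(2\lambda/e^{pr})\big)\sim 2(\lambda-1)e^{-pr}/p$.
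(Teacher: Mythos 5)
Your proof is correct and follows essentially the same route as the paper: both reduce the statement to the identity $\delta_p(r)=\delta(pr)/p$, obtained by observing that $z\mapsto z^p$ carries the circle $\exp V_r$ to the circle of radius $e^{pr}$ while the $p$th root divides arguments by $p$, and then invoke Lemma~\ref{lem:approx-BD} at $pr$. Your additional branch-cut bookkeeping and the alternative direct computation via $\arccos$ are sound but only make explicit what the paper's shorter argument leaves implicit.
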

\begin{proof}
The function $z\mapsto z^p$ maps the circle of radius $e^r$ to the circle of radius $e^{pr}$ while the function $z\mapsto \sqrt[p]{z}$ divides the argument of points on that circle by $p$, so
$$
\delta_{p}(r)=\frac{\delta(pr)}{p}
$$
and hence, by Lemma~\ref{lem:approx-BD}, $\delta_{p}(r)\sim 2(\lambda-1)e^{-pr}/p$ as $r\to+\infty$.
\end{proof}

In the following theorem we construct transcendental entire or meromorphic functions that are self-maps of $\C^*$ and have Baker domains in which points escape to infinity. These functions are of the form $f(z)=z^n\exp(g(z))$ where $n\in\Z$ and $g$~is a non-constant entire function. 

\begin{thm}
\label{thm:baker-domains-entire}
For every $N\in\N$ and $n\in\Z$, there exists a holomorphic self-map~$f$ of $\C^*$ with $\textup{ind}(f)=n$ that is a transcendental entire function, if $n\geqslant 0$, or a transcendental meromorphic function, if $n<0$, and has a cycle of hyperbolic Baker domains of period $N$.
\end{thm}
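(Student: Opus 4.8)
The plan is to realise the $N$-cycle as the $N$ preimage sheets, under $z\mapsto z^{N}$, of a single \emph{simply connected} invariant Baker domain of an auxiliary function built by approximation. Concretely, I would fix $\lambda>1$, set $\zeta_{N}:=e^{2\pi i/N}$ and take $W=\{w\in\C:\Re w\geq 2\}$ as in Lemma~\ref{lem:approx-BD}, together with the $N$ pairwise disjoint root sectors $S_{j}:=\zeta_{N}^{\,j}\sqrt[N]{W}$, $j=0,\dots,N-1$, which lie in the slices $\{z:|\arg z-2\pi j/N|<\pi/N\}$ and, after a harmless rotation of the whole configuration, omit a neighbourhood of $\R_{-}$. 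Using Arakelyan's theorem together with Lemma~\ref{lem:approx-sectors}, I would construct an entire function $\hat g$ so that $\hat f(w):=w^{n}e^{\hat g(w)}$ is a holomorphic self-map of $\C^{*}$ with $\textup{ind}(\hat f)=n$ (entire if $n\geq 0$, meromorphic with a pole at $0$ if $n<0$) and
\[
\hat f(w)=\lambda w\bigl(1+o(1)\bigr)\qquad\text{as }\Re w\to+\infty ,
\]
which amounts to having $\hat g(w)-\ln\lambda-(1-n)\log w$ decay on $W$; by Lemma~\ref{lem:approx-BD} the tolerated error is of size $\sim 2(\lambda-1)e^{-r}$ on the line $\Re(\log w)=r$, that is, only polynomially small in $|w|$, so the integral condition of Lemma~\ref{lem:approx-sectors} is comfortably met. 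Exactly as in Example~\ref{ex:hyp-baker-domain}, $\hat f$ then has an invariant hyperbolic Baker domain $V\supseteq W$ with $\hat f^{k}\to\infty$ on it, and I would prescribe $\hat g$ \emph{off} $W$ so that some curve joining $0$ to $\infty$ is forward invariant under $\hat f$ and disjoint from $V$; this forces $V$ to be simply connected, since a loop in $V$ winding around $0$ would have to meet that curve.

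Next, set $f(z):=\zeta_{N}\,z^{n}e^{\hat g(z^{N})/N}$. Then $f$ is a holomorphic self-map of $\C^{*}$ with $\textup{ind}(f)=n$, entire if $n\geq 0$ and meromorphic if $n<0$, and $f(z)^{N}=\hat f(z^{N})$; in other words $f$ is semiconjugate to $\hat f$ by the unbranched $N$-fold covering $P(z)=z^{N}$ of $\C^{*}$, and on the sheets $S_{j}$ it behaves like $z\mapsto\zeta_{N}\lambda^{1/N}z$, the relevant gap now being the narrower one $\delta_{N}(r)\sim 2(\lambda-1)e^{-Nr}/N$ of Lemma~\ref{lem:approx-root-BD}. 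Since $P$ maps $F(f)$ into $F(\hat f)$ and $V$ is simply connected, $P^{-1}(V)$ has exactly $N$ components $U_{0},\dots,U_{N-1}$, one containing each $S_{j}$, and each $U_{j}$ is a Fatou component of $f$; as $f$ advances the argument by $2\pi/N$ we get $f(U_{j})\subseteq U_{j+1\bmod N}$, so $U_{0},\dots,U_{N-1}$ form a cycle whose period is exactly $N$ (distinctness being immediate, since each $U_{j}$ lies in a single sheet of $P^{-1}(V)$). Each $U_{j}$ escapes to infinity because $\hat f^{k}\to\infty$ on $V$, hence is a Baker domain; and since $f^{N}(z)=\lambda z(1+o(1))$ on $U_{j}$ while $0\notin U_{j}$, we obtain
\[
c_{m}(z_{0})=\frac{|f^{N(m+1)}(z_{0})-f^{Nm}(z_{0})|}{\textup{dist}\bigl(f^{Nm}(z_{0}),\partial U_{j}\bigr)}\geq(\lambda-1)\bigl(1+o(1)\bigr),
\]
so $\liminf_{m\to\infty}c_{m}(z_{0})\geq\lambda-1>0$ uniformly in $z_{0}$, and Lemma~\ref{lem:bd-koenig}(a) shows the cycle is hyperbolic.

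The step I expect to be the main obstacle is guaranteeing that the downstairs Baker domain $V$ is simply connected, i.e.\ does not surround the origin --- this is precisely what makes $P^{-1}(V)$ split into $N$ components and hence forces the period to be $N$ rather than a proper divisor of it. Arranging it means choosing the values of $\hat g$ outside $W$ judiciously, and the natural choice depends on the sign of $n$: when $n\geq 1$ the origin is a (super)attracting fixed point of $\hat f$ and one routes a half-line near $\R_{-}$ into its basin; when $n=0$ one prescribes $\hat g$ near $0$ directly; when $n<0$ the origin is a pole of $\hat f$, so one instead forces the imaginary axis into $J(\hat f)$ in order to confine $V$ to a slit right half-plane. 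Making this precise while keeping the approximating set a Weierstrass set contained in a sector, so that Lemma~\ref{lem:approx-sectors} applies with the error decays dictated by Lemmas~\ref{lem:approx-BD} and~\ref{lem:approx-root-BD}, is the part that needs care; everything else is routine.
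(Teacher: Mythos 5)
Your overall strategy --- realising the $N$-cycle on the $N$ root-sectors of a half-plane, with $f^N$ behaving like $T_\lambda(z)=\lambda z$ after semiconjugation by $z\mapsto z^N$ --- is the same mechanism the paper uses (this is exactly what Lemma~\ref{lem:approx-root-BD} is for). But your way of producing $f$, namely lifting a single downstairs function $\hat f(w)=w^n e^{\hat g(w)}$ through $P(z)=z^N$ via $f(z)=\zeta_N z^n e^{\hat g(z^N)/N}$, contains a genuine error: the induced permutation of the sheets is not the cyclic shift $j\mapsto j+1$ for general $n$. With $\rho(z)=\zeta_N z$ one has $f(\rho(z))=\rho^{\,n}(f(z))$, so if $f(U_0)\subseteq U_1$ then $f(U_j)=f(\rho^{\,j}(U_0))\subseteq \rho^{\,nj}(U_1)=U_{nj+1\bmod N}$. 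Equivalently, writing $z=\zeta_N^{\,j}w$ with $w\in S_0$, the factor $e^{\hat g(z^N)/N}\approx\lambda^{1/N}w^{1-n}$ sees only $z^N=w^N$ and cannot distinguish the sheets, whereas $z^n=\zeta_N^{\,jn}w^n$ does; hence on $S_j$ your map behaves like $z\mapsto\zeta_N^{\,j(n-1)+1}\lambda^{1/N}z$, not like $z\mapsto\zeta_N\lambda^{1/N}z$. The affine map $j\mapsto nj+1$ on $\Z/N\Z$ is an $N$-cycle only for special pairs $(n,N)$ (for instance $n\equiv 1\pmod N$). For $n\equiv 0\pmod N$ it is not even injective: every sheet maps into $U_1$, which becomes an \emph{invariant} Baker domain with $N-1$ preimage components rather than a period-$N$ cycle. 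For $n=-1$, $N=3$ it is the transposition of $U_0$ and $U_1$ with $U_2$ fixed. No choice of the prefactor repairs this, because the multiplier $n\bmod N$ of the affine map is forced by the relation $f\circ\rho=\rho^{\,n}\circ f$, which any map of the form $cz^ne^{\hat g(z^N)/N}$ satisfies. Since the theorem must hold for every $n\in\Z$, this is fatal to the approach as stated.

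This is precisely why the paper performs the approximation upstairs rather than downstairs: the target function \eqref{eq:bd-3} is prescribed separately on each sector $V_m$, with a term $-n\log z$ that cancels the $z^n$ prefactor and a rotation $\omega_N^{m+1}$ chosen per sector, so that $f(V_m)\subseteq V_{m+1}$ for every $n$. If you keep your two-step structure you must give up the identity $f(z)^N=\hat f(z^N)$ and approximate a sheet-dependent target directly, which collapses your argument into the paper's. Two further points that you flag yourself but should not underestimate: you need the downstairs domain $V$ to be a full, simply connected Fatou component (the paper achieves the analogous separation by inserting the connected buffer set $B$ of \eqref{eq:bd-2}, mapped into a small disc, so that each sector is enclosed by an attracting basin), and you need $F(f)=P^{-1}(F(\hat f))$ for the finite covering $P$ so that the $N$ components of $P^{-1}(V)$ really are distinct Fatou components. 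Both are doable, but neither is automatic, and the first is exactly the kind of global control that the approximation set has to be designed to deliver.
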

\begin{proof}
Let $\omega_{N}:=e^{2\pi i/N}$ and define 
$$
V_{m}:=\omega_{N}^m\sqrt[N]{W}\subseteq \C\setminus \overline{\mathbb D} \quad \mbox{ for } 0\leqslant m<N,
$$
where $W$ is the closed half-plane from Lemma~\ref{lem:approx-BD}. We denote by $V$ the union of all $V_m$ for $0\leqslant m<N$, and let $R:=\R_-$, if $N$ is odd, or $R:=\{z\in\C^*\,:\,\textup{arg}\,z=\pi(1-1/N)\}$, if $N$ is even. Then put 
\begin{equation}
d:=\min\{(\sqrt[N]{2}-1)/3,\ \textup{dist}(V,R)/4\},
\label{eq:bd-1}
\end{equation}
and define the closed connected set
\begin{equation}
B:=\{z\in\C\,:\,\textup{dist}\,(z,V)\geqslant d \mbox{ and } \textup{dist}\,(z,R)\geqslant d\},
\label{eq:bd-2}
\end{equation}
which satisfies $B':=\overline{D(1,d)}\subseteq \textup{int}\,B$ (see Figure~\ref{fig:sketch-bd-entire}).

\begin{figure}[h!]
\centering
\def\svgwidth{.60\linewidth}
\input{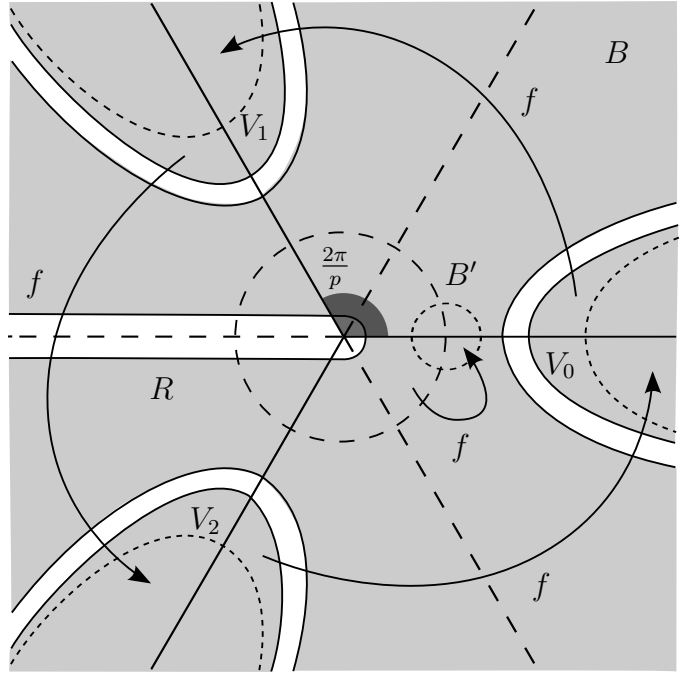}
\caption[Sketch of the construction of a transcendental entire or meromorphic function that is a self-map of $\C^*$ and has a cycle of hyperbolic Baker domains]{Sketch of the construction in the proof of Theorem~\ref{thm:baker-domains-entire} with \mbox{$N\! =\! 3$}. The sets $B$ and $V_m$, $0\leqslant m<N$, are shaded in grey.}
\label{fig:sketch-bd-entire} 
\end{figure}

Observe that the closed set $F:=B\cup V$ satisfies the hypothesis of Lemma \ref{lem:approx-sectors}; namely $\CR\setminus F$ is connected and $\CR \setminus F$ is locally connected at infinity, and $F\subseteq W_\alpha$ with $\alpha=2\pi$. We now define a function $\hat{g}$ on~$F$:
\begin{equation}
\hat{g}(z)\!:=\!\left\{\begin{array}{ll}
\!\!\!\log \left(\omega_{N}^{m+1}\sqrt[N]{\lambda (z/\omega_{N}^m)^{N}}\right)\! -n\log z, & \!\mbox{for } z\in V_m,\ 0\leqslant m<N,\vspace{5pt}\\
\!\!\!-n\log z, & \!\mbox{for } z\in B,
\end{array}\right. 
\label{eq:bd-3}
\end{equation}
where we have taken an analytic branch of the logarithm defined on $\C^*\setminus R$ and hence on $F$. Then $\hat{g}\in A(F)$.

For $r>0$, we define the positive continuous function 
\begin{equation}
\varepsilon(r):=\min\{d',\ k^{-(N+1)},\ r^{-(N+1)}\} 
\label{eq:bd-4}
\end{equation}
where the constant $d'>0$ is so small that $|e^z-1|<d$ for $|z|<d'$ and the constant $k>0$ is so large that, for all $z\in \log T_\lambda(W)$ with $\textup{Re}\,z<k$, the disc $D(z,k^{-(N+1)})$ is compactly contained in $\log W$ and, moreover, if $\delta_{N}(r)$ is the function from Lemma~\ref{lem:approx-root-BD}, then
\begin{equation}
\varepsilon(r)<\delta_{N}(\ln (\lambda r)) \quad \mbox{ for } r\geqslant k,
\label{eq:bd-5}
\end{equation}
which is possible since
$$
\delta_{N}(\ln (\lambda r))\sim \frac{2(\lambda-1)}{N\lambda^N r^N} \quad \mbox{ as } r\to+\infty.
$$
Since $\varepsilon$ satisfies
$$
\int_1^{+\infty} r^{-3/2}\ln\varepsilon(r)dt=C-(N+1)\int_{r_0'}^{+\infty} \frac{\ln r}{r^{3/2}}dr>-\infty
$$
for some constants $C\in\R$ and $r_0'\geqslant r_0$, by Lemma \ref{lem:approx-sectors} (with $\alpha=2\pi$), there is an entire function~$g$ such that
\begin{equation}
|g(z)-\hat{g}(z)|<\varepsilon(|z|)\quad\mbox{ for all } z\in F.
\label{eq:bd-entire-approx}
\end{equation}
We put 
\begin{equation}
f(z):=z^n\exp(g(z))=z^n\exp(\hat{g}(z))\exp(g(z)-\hat{g}(z)).
\label{eq:bd-6}
\end{equation}
By Lemma \ref{lem:approx-root-BD} and (\ref{eq:bd-3}-\ref{eq:bd-entire-approx}), $f(V_m)\subseteq V_{m+1}$ for \mbox{$0\leqslant m<N- 1$} and $f(V_{N-1})\subseteq V_0$ and, by (\ref{eq:bd-1}-\ref{eq:bd-entire-approx}), $f(B)\subseteq D(1,d)$. Hence each set $V_m$ is contained in an $N$-periodic Fatou component~$U_m$ for $0\leqslant m<N$ and~$B$ is contained in the immediate basin of attraction of an attracting fixed point that lies in $B'$. It follows that the Fatou components $U_m$ are all simply connected.

To conclude the proof of Theorem~\ref{thm:baker-domains-entire}, it only remains to check that the Fatou components $U_m$, $0\leqslant m<N$, are hyperbolic Baker domains. Due to symmetry, it suffices to deal with the case $m=0$. Let $z_0\in U_0$. Since $V_0\subseteq U_0$ is an absorbing region, we can assume without loss of generality that $z_0\in V_0$ and $|z_0|$ is sufficiently large. For $n\in\N$, let 
$$
\epsilon_n:=g(f^{n-1}(z_0))-\hat{g}(f^{n-1}(z_0))
$$
which, by \eqref{eq:bd-entire-approx}, satisfies 
$$
|\epsilon_n|<\varepsilon(|f^{n-1}(z_0)|) \quad \mbox{ as } n\to \infty.
$$
For $n\in\N$, define 
$$
C_n:=\prod_{0<k\leqslant n} \exp \epsilon_k=\exp \sum_{0<k\leqslant n} \epsilon_k,
$$
which represents the quotient $f^n(z_0)/\bigl(z^n\exp(\hat{g}(z_0))\bigr)$. Using the triangle inequality, we obtain
\begin{equation}
|C_n|\leqslant \exp \sum_{0<k\leqslant n} |\epsilon_k|<\exp\sum_{0<k\leqslant n} \varepsilon(|f^{k-1}(z_0)|).
\label{eq:bd-9}
\end{equation}
Next, we are going to show that $|C_n|$ is bounded above for all $n\in\N$. To that end, we find a lower bound for $|f^{k}(z_0)|$ for $k\in\N$ assuming, if necessary, that $|z_0|=r_0$ is sufficiently large. Put $K:=(\sqrt[N]{\lambda}-1)/2>0$. Then $|C_1|>1/K$ for $r_0>0$ sufficiently large and, by \eqref{eq:bd-6} and \eqref{eq:bd-3},
$$
|f(z_0)|=\sqrt[N]{\lambda}|z_0||C_1|\geqslant \frac{\sqrt[N]{\lambda}}{K}r_0=\mu r_0,
$$
with $\mu:=\sqrt[N]{\lambda}/K>1$. Hence, by induction and the symmetry properties of the sets $V_m$, $0\leqslant m<N$,
\begin{equation}
|f^k(z_0)|\geqslant \mu^k r_0\quad \mbox{ for } k\in\N.
\label{eq:bd-7}
\end{equation}
In particular, $z_0\in I(f)$ so, by normality, the periodic Fatou components $U_m$, $0\leqslant m<N$, are Baker domains. We deduce by \eqref{eq:bd-9}, \eqref{eq:bd-4} and \eqref{eq:bd-7} that $|C_n|<e^S$ for all $n\in\N$, where $S<+\infty$ is the sum of the following geometric series
$$
S:=\sum_{k=0}^\infty \frac{1}{(\mu^kr_0)^{N+1}}=\frac{1}{r_0^{N+1}} \sum_{k=0}^\infty \left( \frac{1}{\mu^{N+1}} \right)^k =\frac{\mu^{N+1}}{r_0^{N+1}(\mu^{N+1}-1)}.
$$

Next we use the characterisation of Lemma~\ref{lem:bd-koenig} to show that the Baker domains are hyperbolic. For $n\in\N$, define
$$
c_n=c_n(z_0)=\frac{|f^{(n+1)N}(z_0)-f^{nN}(z_0)|}{\textup{dist}(f^{nN}(z_0),\partial U)}.
$$
We have
$$
f^{nN}(z_0)=C_{nN}\sqrt[N]{\lambda^{nN}z_0^N}=C_{nN}\lambda^{n}z_0 \quad \mbox{ for } n\in\N
$$
and therefore
$$
|f^{(n+1)N}(z_0)-f^{nN}(z_0)|\sim C_\infty\lambda^n(\lambda-1)|z_0|\quad \mbox{ as } n\to\infty,
$$
where $C_\infty:=\lim_{n\to\infty} C_n$. Also, $\textup{dist}(f^{nN}(z_0),\partial U_0)\leqslant e^{S}\lambda^n|z_0|$ and hence if $c:=(\lambda-1)/2>0$, we have $c_n(z_0)>c$ for all $n\in\N$. Thus, by Lemma~\ref{lem:bd-koenig}, the Baker domain $U_0$ is hyperbolic. This completes the proof of Theorem~\ref{thm:baker-domains-entire}.
\end{proof}

Finally we prove Theorem \ref{thm:baker-domains} in which we construct a function~$f$ that is a transcendental self-map of $\C^*$ with $\textup{ind}(f)=n$ that has a cycle of hyperbolic Baker domains in $I_e(f)$, where $e$ is any prescribed periodic essential itinerary $e\in\{0,\infty\}^{\N_0}$.

\begin{proof}[Proof of Theorem \ref{thm:baker-domains}] 
Let $N\in \N$ be the period of $e$ and let $p,q\in\N_0$ denote, respectively, the number of symbols $0$ and $\infty$ in the sequence $e_0e_1\hdots e_{N-1}$, where $p+q=N$; see \eqref{eq:p-and-q}. We modify the proof of Theorem~\ref{thm:baker-domains-entire} to obtain a transcendental self-map of $\C^*$ of the form
$$
f(z):=z^n\exp(g(z)z^{N+1}+h(1/z)/z^{N+1})
$$
that has a hyperbolic Baker domain $U$ in $I_e(f)$, where the entire functions $g, h$~will be constructed using approximation theory. 

We start by defining a collection of $p$ sets $\{V_m^\infty\}_{0\leqslant m<p}$, whose closure in $\CR$ contains infinity. Put $\omega_{p}:=e^{2\pi i/p}$ once again and define
$$
V_{m}^\infty:=\omega_{p}^m\sqrt[p]{W}\subseteq \C\setminus \overline{D(0,\rho)} \quad \mbox{ for } 0\leqslant m<p,
$$
where $W$ is the half-plane from Lemma~\ref{lem:approx-BD} and $\rho:=1+(\sqrt[N]{2}-1)/6$. We denote by $V_\infty$ the union of all $V_m^\infty$, $0\leqslant m<p$.

As before, we define a set $B_\infty$ that will be contained in an immediate basin of attraction of $f$ and put $R_\infty=\R_-$, if $p$ is odd, or $R_\infty=\{z\in\C^*\,:\,\textup{arg}\,z=\pi(1-1/p)\}$, if $p$ is even. Then, let
$$
d_\infty:=\min\{(\sqrt[N]{2}-1)/6,\ \textup{dist}(V_\infty,R_\infty)/4\},
$$
and define the closed connected set
$$
B_\infty:=\{z\in\C\,:\,\textup{dist}\,(z,V_\infty)\geqslant d_\infty \mbox{ and } \textup{dist}\,(z,R_\infty)\geqslant d_\infty\}\setminus D(0,\rho),
$$
which compactly contains the disc $B_\infty':=\overline{D((1+\sqrt[N]{2})/2,(\sqrt[N]{2}-1)/6)}$. Finally, we define the disc $D:=D(0,1/\rho)$, which is contained in $\mathbb D$. We will construct the function $g$ by approximating it on the closed set $F_\infty:=V_\infty\cup B_\infty\cup D$, which satisfies the hypothesis of Lemma \ref{lem:approx-sectors}; namely $\CR\setminus F_\infty$ is connected and $\CR \setminus F_\infty$ is locally connected at infinity, and $F_\infty\subseteq W_\alpha$ with $\alpha=2\pi$ (see Figure~\ref{fig:sketch-bd-cstar-1side}).

\begin{figure}[h!]
\centering
\def\svgwidth{.60\linewidth}
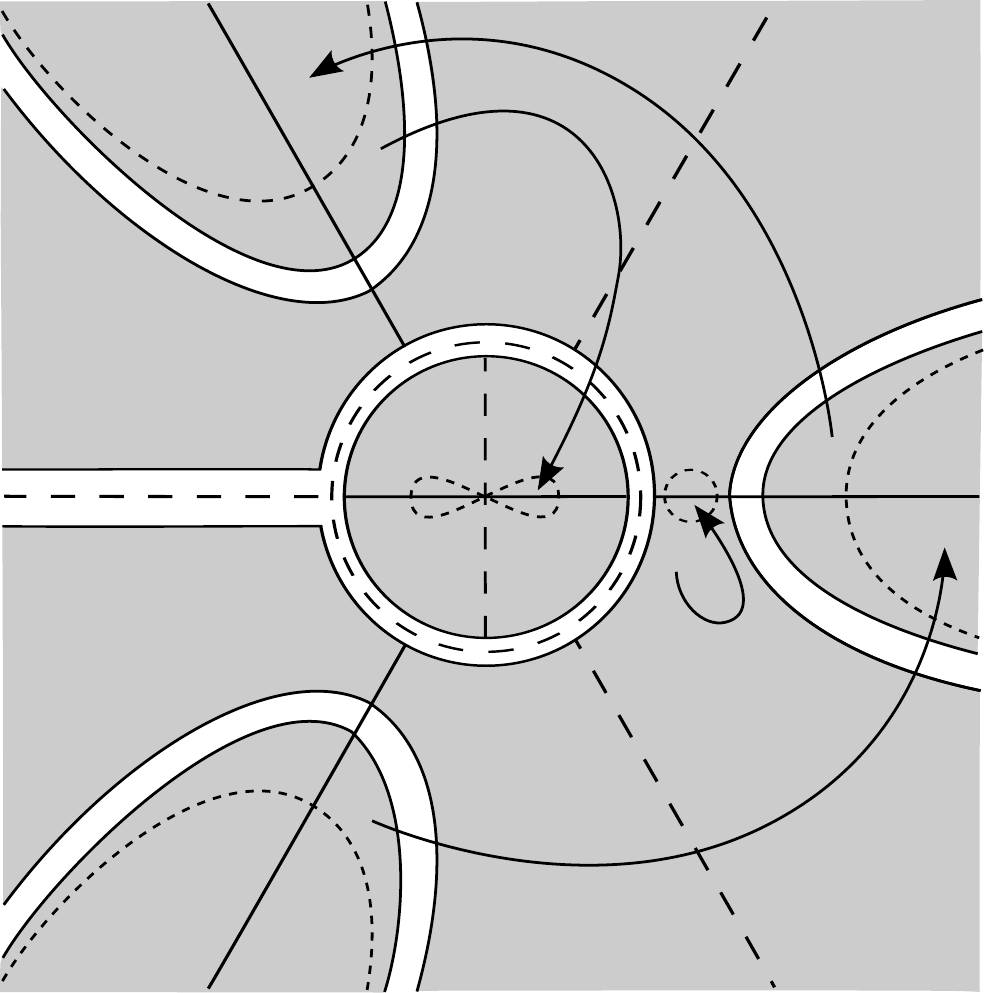
\caption[Sketch of the construction of a transcendental self-map of $\C^*$ that has a cycle of hyperbolic Baker domains I]{Sketch of the construction of the entire function $g$ in the proof of Theorem~\ref{thm:baker-domains} with $e=\overline{\infty\infty00\infty}$. The sets $D$, $B_\infty$ and $V_m^\infty$, \mbox{$0\leqslant m<p$}, are shaded in grey.}
\label{fig:sketch-bd-cstar-1side} 
\end{figure}

Similarly, we define a set $B_0$ and a collection of $q$ unbounded sets $\{V_m^0\}_{0\leqslant m<q}$ by using the same procedure as above, just replacing $p$ by $q$, and then, if $V_0$ is the union of all $V_m^0$, $0\leqslant m<q$, we put $F_0:=V_0\cup B_0\cup D$. The Fatou set of the function $f$ will contain all the sets $V_m^\infty$, $0\leqslant m<p$, and all the sets $\tilde{V}_m^0:=1/V_m^0$, $0\leqslant m<q$, which are unbounded in $\C^*$.

In order to define the functions $\hat{g}\in A(F_\infty)$ and $\hat{h}\in A(F_0)$, we first introduce some notation to describe how $\hat{g}$ and $\hat{h}$ map the components of $V_\infty$ and $V_0$, respectively; we use the same notation as in Theorem~\ref{thm:wandering-domains}. Let $\pi:\{0,\hdots,N-1\}\to \{-q,\hdots,-1,1,\hdots,p\}$ denote the function given by, for $0\leqslant k<N$,
$$
\pi (k):=\left\{
\begin{array}{ll}
\#\{\ell\in\N_0\ :\ e_\ell=\infty \mbox{ for } \ell<k\}+1, & \mbox{ if } e_k=\infty,\vspace{5pt}\\
-\,\#\{\ell\in\N_0\ :\ e_\ell=0 \mbox{ for } \ell<k\}-1, & \mbox{ if } e_k=0.\\
\end{array}
\right.
$$
The function $\pi$ is an ordering of the components of $V_\infty \cup 1/V_0$ according to the sequence $e$. Suppose that $V$ is the starting component; that is, $V=\tilde{V}_0^0$, if $e_0=0$, and $V=V_0^\infty$, if $e_0=\infty$. Then
$$
f^k(V)\subseteq\left\{
\begin{array}{ll}
V_{\pi(k)}^\infty, & \mbox{ if } \pi(k)>0,\vspace{5pt}\\
\tilde{V}_{-\pi(k)}^0, & \mbox{ if } \pi(k)<0.
\end{array}\right.\vspace*{-5pt}
$$
For $m\in \{-q,\hdots,-1,1,\hdots ,p\}$, we define the function
$$
s(m):=\pi(\pi^{-1}(m)+1 \pmod{N}),\vspace*{-5pt}
$$
which describes the image of the component $V_m^\infty$, if $m>0$, and $\tilde{V}_m^0$, if $m<0$, so that the function $f$ to be constructed has a Baker domain that has essential itinerary $e$. More formally, for $0\leqslant m<p$,
$$
f(V_m^\infty)\subseteq\left\{
\begin{array}{ll}
V_{s(m)}^\infty, & \mbox{ if } s(m)>0,\vspace{5pt}\\
\tilde{V}_{-s(m)}^0, & \mbox{ if } s(m)<0;
\end{array}
\right.\vspace*{-5pt}
$$ 
and, for $0\leqslant m<q$, 
$$
f(\tilde{V}_m^0)\subseteq\left\{
\begin{array}{ll}
 V_{s(-m)}^\infty, & \mbox{ if } s(-m)>0,\vspace{5pt}\\
\tilde{V}_{-s(-m)}^0, &\mbox{ if } s(-m)<0.
 \end{array}
 \right.
 $$
We now give the details of the construction of the entire function $g$ from the function $\hat{g}\in A(F_\infty)$. For $z\in V_m^\infty$, $0\leqslant m<p$, we put 
$$
\hat{g}(z):=\left\{\begin{array}{ll}
\left(\log \left(\omega_{p}^{s(m)}\sqrt[p]{\lambda (z/\omega_{p}^m)^{p}}\right)-n\log z\right)/z^{N+1}, & \mbox{ if } s(m)>0,\vspace{5pt}\\
\left(\log \left(\omega_{p}^{s(m)}/\sqrt[p]{\lambda (z/\omega_{p}^m)^{p}}\right)-n\log z\right)/z^{N+1}, & \mbox{ if } s(m)<0,
\end{array}\right.
$$
for $z\in B_\infty$, we put $\hat{g}(z):=(\log(1+(\sqrt[n]{2}-1)/2)-n\log z)/z^{N+1}$ and, for $z\in D$, we put $\hat{g}(z):=0$, where we have taken an analytic branch of the logarithm defined on $\C^*\setminus R_\infty$ and hence on $V_\infty\cup B_\infty$ (see Figure~\ref{fig:sketch-bd-cstar-1side}). Then $\hat{g}\in A(F_\infty)$. For $r>0$, we define the positive continuous function $\varepsilon_\infty$ by
$$
\varepsilon_\infty(r):=\min\{d'_\infty,\ k_\infty^{-(N+1)},\ r^{-(N+1)}\} /(2r^{N+1})
$$
where the constant $d'_\infty>0$ is so small that $|e^z-1|<d_\infty$ for $|z|<d'_\infty$ and the constant $k_\infty>0$ is so large that, for all $z\in \log T_\lambda(W)$ with $\textup{Re}\,z<k_\infty$, the disc $D(z,k_\infty^{-(N+1)})$ is compactly contained in $\log W$ and, moreover, if $\delta_{N}(r)$ is the function from Lemma~\ref{lem:approx-root-BD}, then 
$$
\varepsilon_\infty(r)\cdot 2r^{N+1}<\delta_{N}(\ln (\lambda r)) \quad \mbox{ for } r\geqslant k_\infty, 
$$
which, as before, is possible since 
$$
\delta_{N}(\ln (\lambda r)) \sim \frac{2(\lambda-1)}{N\lambda^Nr^N} \quad \mbox{ as } r\to+\infty.
$$
Since $\varepsilon_\infty$ satisfies 
$$
\int_1^{+\infty} r^{-3/2}\ln\varepsilon_\infty(r)dt>-\infty,
$$
by Lemma \ref{lem:approx-sectors} (with $\alpha=2\pi$), there is an entire function~$g$ such that
\begin{equation}
|g(z)-\hat{g}(z)|<\left\{\begin{array}{ll}
\varepsilon_\infty(|z|) & \mbox{ for } z\in V_\infty\cup B_\infty,\vspace{5pt}\\
1/2 & \mbox{ for } z\in D.
\end{array}\right.
\label{eq:bd-entire-approx}
\end{equation}

Similarly, we can construct an entire function $h$ that approximates a function $\hat{h}\in A(F_0)$ so that the function
$$
\begin{array}{rl}
f(z):=&\hspace{-6pt}z^n\exp(g(z)z^{N+1}+h(1/z)/z^{N+1})\vspace{5pt}\\
=&\hspace{-6pt}z^n\exp(\hat{g}(z)z^{N+1})\exp(\hat{h}(1/z)/z^{N+1}) \cdot \vspace{5pt}\\
&\cdot \exp((g(z)-\hat{g}(z))z^{N+1})\exp((h(z)-\hat{h}(z))/z^{N+1})
\end{array}
$$
has the desired properties. Observe that if $z\in V_\infty\cup B_\infty$, then $1/z\in D$ and if $1/z\in V_0\cup B_0$, then $z\in D$. Thus, $\hat{h}(1/z)=0$ for $z\in V_\infty\cup B_\infty$ and
$$
\begin{array}{c}
|\hat{h}(1/z)/z^{N+1}+(g(z)-\hat{g}(z))z^{N+1}+(h(z)-\hat{h}(z))/z^{N+1}|\leqslant\vspace{5pt}\\
\leqslant 0+1/(2|z|^{N+1})+1/(2|z|^{N+1})=1/|z|^{N+1}
\end{array}
$$
for $z\in V_\infty\cup B_\infty$. 

\begin{figure}[ht!]
\centering
\def\svgwidth{.60\linewidth}
\input{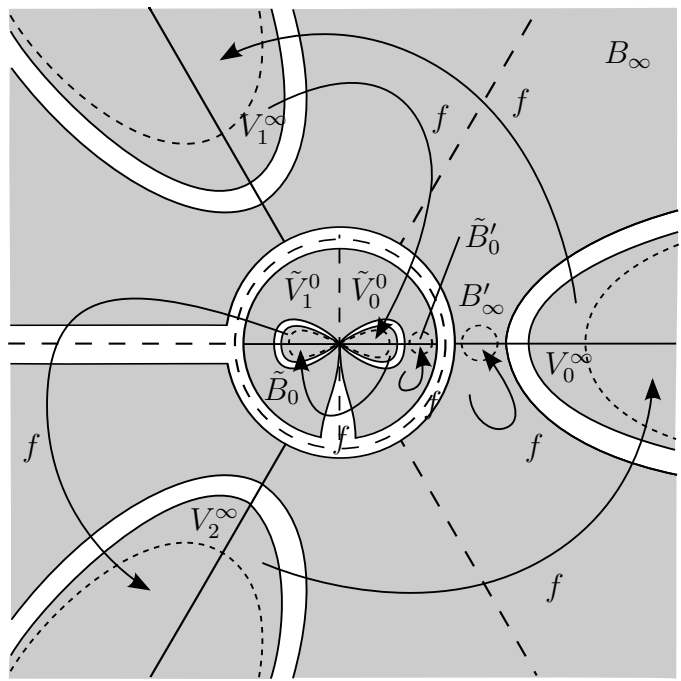}
\caption[Sketch of the construction of a transcendental self-map of $\C^*$ that has a cycle of hyperbolic Baker domains II]{Sketch of the construction of the function $f$ in the proof of Theorem~\ref{thm:baker-domains} with $e=\overline{\infty\infty00\infty}$. 
}
\label{fig:sketch-bd-cstar-2sides} 
\end{figure}

Finally, a similar argument to that in the proof of Theorem~\ref{thm:baker-domains-entire} shows that the Fatou components that we have constructed are hyperbolic Baker domains; we omit the details.
\end{proof}

\red{

}

\bibliography{bibliography}

\newcommand{\noopsort}[1]{}
\providecommand{\bysame}{\leavevmode\hbox to3em{\hrulefill}\thinspace}
\providecommand{\MR}{\relax\ifhmode\unskip\space\fi MR }
\providecommand{\MRhref}[2]{%
  \href{http://www.ams.org/mathscinet-getitem?mr=#1}{#2}
}
\providecommand{\href}[2]{#2}
\begin{thebibliography}{{Mar}16b}

\bibitem[Ara64]{arakeljan64}
N.~U. Arakelyan, \emph{Uniform approximation on closed sets by entire
  functions}, Izv. Akad. Nauk SSSR Ser. Mat. \textbf{28} (1964), 1187--1206.

\bibitem[Bak63]{baker63}
I.~N. Baker, \emph{Multiply connected domains of normality in iteration
  theory}, Math. Z. \textbf{81} (1963), 206--214.

\bibitem[Bak76]{baker76}
\bysame, \emph{An entire function which has wandering domains}, J. Austral.
  Math. Soc. Ser. A \textbf{22} (1976), no.~2, 173--176.

\bibitem[Bak84]{baker84}
\bysame, \emph{Wandering domains in the iteration of entire functions}, Proc.
  London Math. Soc. (3) \textbf{49} (1984), no.~3, 563--576.

\bibitem[Bak87]{baker87}
\bysame, \emph{Wandering domains for maps of the punctured plane}, Ann. Acad.
  Sci. Fenn. Ser. A I Math. \textbf{12} (1987), no.~2, 191--198.

\bibitem[BD98]{baker-dominguez98}
I.~N. Baker and P.~{Dom\'inguez-Soto}, \emph{Analytic self-maps of the
  punctured plane}, Complex Variables Theory Appl. \textbf{37} (1998), no.~1-4,
  67--91.

\bibitem[Ber93]{bergweiler93}
W.~Bergweiler, \emph{Iteration of meromorphic functions}, Bulletin of the
  American Mathematical Society \textbf{29} (1993), no.~2, 151--188.

\bibitem[Ber95]{bergweiler95}
\bysame, \emph{On the {J}ulia set of analytic self-maps of the punctured
  plane}, Analysis \textbf{15} (1995), no.~3, 251--256.

\bibitem[Bis15]{bishop15}
C.~J. Bishop, \emph{Constructing entire functions by quasiconformal folding},
  Acta Math. \textbf{214} (2015), no.~1, 1--60.

\bibitem[BRS13]{bergweiler-rippon-stallard13}
W.~Bergweiler, P.~J. Rippon, and G.~M. Stallard, \emph{Multiply connected
  wandering domains of entire functions}, Proc. Lond. Math. Soc. (3)
  \textbf{107} (2013), no.~6, 1261--1301.

\bibitem[BZ12]{bergweiler-zheng12}
W.~Bergweiler and J.-H. Zheng, \emph{Some examples of {B}aker domains},
  Nonlinearity \textbf{25} (2012), no.~4, 1033--1044.

\bibitem[Cow81]{cowen81}
C.~C. Cowen, \emph{Iteration and the solution of functional equations for
  functions analytic in the unit disk}, Trans. Amer. Math. Soc. \textbf{265}
  (1981), no.~1, 69--95.

\bibitem[EL92]{eremenko-lyubich92}
A.~E. Eremenko and M.~Yu. Lyubich, \emph{Dynamical properties of some classes
  of entire functions}, Ann. Inst. Fourier (Grenoble) \textbf{42} (1992),
  no.~4, 989--1020.

\bibitem[Ere89]{eremenko89}
A.~E. Eremenko, \emph{On the iteration of entire functions}, Dynamical systems
  and ergodic theory ({W}arsaw, 1986), Banach Center Publ., vol.~23, PWN,
  Warsaw, 1989, pp.~339--345.

\bibitem[Fag99]{fagella99}
N.~Fagella, \emph{Dynamics of the complex standard family}, J. Math. Anal.
  Appl. \textbf{229} (1999), no.~1, 1--31.

\bibitem[Fat26]{fatou26}
P.~Fatou, \emph{Sur l'it\'eration des fonctions transcendantes enti\`eres},
  Acta Math. \textbf{47} (1926), no.~4, 337--370.

\bibitem[FH06]{fagella-henriksen06}
N.~Fagella and C.~Henriksen, \emph{Deformation of entire functions with {B}aker
  domains}, Discrete Contin. Dyn. Syst. \textbf{15} (2006), no.~2, 379--394.

\bibitem[FM17]{fagella-martipete}
N.~Fagella and D.~{Mart\'i-Pete}, \emph{Dynamic rays of bounded-type
  transcendental self-maps of the punctured plane}, Discrete Contin. Dyn. Syst.
  Ser. A \textbf{37} (2017), 3123--3160.

\bibitem[Gai87]{gaier87}
D.~Gaier, \emph{Lectures on complex approximation}, Birkh\"auser Boston, Inc.,
  Boston, MA, 1987, Translated from the German by Renate McLaughlin.

\bibitem[Gau13]{gauthier13}
P.~M. Gauthier, \emph{Approximating the {R}iemann zeta-function by strongly
  recurrent functions}, Blaschke products and their applications, Fields Inst.
  Commun., vol.~65, Springer, New York, 2013, pp.~31--42.

\bibitem[KL03]{keen-lakic03}
L.~Keen and N.~Lakic, \emph{Forward iterated function systems}, Complex
  dynamics and related topics: lectures from the {M}orningside {C}enter of
  {M}athematics, New Stud. Adv. Math., vol.~5, Int. Press, Somerville, MA,
  2003, pp.~292--299.

\bibitem[K{\"o}n99]{koenig99}
H.~K{\"o}nig, \emph{Conformal conjugacies in {B}aker domains}, J. London Math.
  Soc. (2) \textbf{59} (1999), no.~1, 153--170.

\bibitem[Kot90]{kotus90}
J.~Kotus, \emph{The domains of normality of holomorphic self-maps of {${\bf
  C}^*$}}, Ann. Acad. Sci. Fenn. Ser. A I Math. \textbf{15} (1990), no.~2,
  329--340.

\bibitem[{Mar}]{martipete4}
D.~{Mart\'i-Pete}, \emph{{\noopsort{U}}{E}scaping points and semiconjugation of
  holomorphic self-maps of the punctured plane}, in preparation.

\bibitem[{Mar}16a]{martipete1}
\bysame, \emph{The escaping set of transcendental self-maps of the punctured
  plane}, to appear in Ergod. Th. \& Dynam. Sys., 1--22, DOI
  10.1017/etds.2016.36. Published online on October 13, 2016.

\bibitem[{Mar}16b]{martipete}
\bysame, \emph{Structural theorems for holomorphic self-maps of the punctured
  plane}, Ph.D. thesis, The Open University, 2016.

\bibitem[Mil06]{milnor06}
J.~W. Milnor, \emph{Dynamics in one complex variable}, third ed., Annals of
  {M}athematics Studies, vol. 160, Princeton University Press, Princeton, 2006.

\bibitem[Muk91]{mukhamedshin91}
A.~N. Mukhamedshin, \emph{Mapping of a punctured plane with wandering domains},
  Sibirsk. Mat. Zh. \textbf{32} (1991), no.~2, 184--187, 214.

\bibitem[Ner71]{nersesjan71}
A.~A. Nersesyan, \emph{Carleman sets}, Izv. Akad. Nauk Armjan. SSR Ser. Mat.
  \textbf{6} (1971), no.~6, 465--471.

\bibitem[R{\r{a}}d53]{radstrom53}
H.~R{\r{a}}dstr\"om, \emph{On the iteration of analytic functions.}, Math.
  Scand. \textbf{1} (1953), 85--92.

\bibitem[Rip08]{rippon08}
P.~J. Rippon, \emph{Baker domains}, Transcendental {D}ynamics and {C}omplex
  {A}nalysis, London Math. Soc. Lecture Note Ser., vol. 348, Cambridge Univ.
  Press, Cambridge, 2008, pp.~371--395.

\bibitem[RS08]{rippon-stallard08}
P.~J. Rippon and G.~M. Stallard, \emph{On multiply connected wandering domains
  of meromorphic functions}, J. Lond. Math. Soc. (2) \textbf{77} (2008), no.~2,
  405--423.

\end{thebibliography}

\end{document}